\newtheorem{assumption}{Assumption}
\numberwithin{equation}{section}
\newcommand{\bv}{\mathbf{v}} 
\newcommand{\bm}{\mathbf{m}}  
\newcommand{\ba}{\mathbf{a}}
\newcommand{\bc}{\mathbf{c}} 
\newcommand{\bx}{\mathbf{x}}
\newcommand{\by}{\mathbf{y}}
\newcommand{\bz}{\mathbf{z}}
\newcommand{\bg}{\mathbf{g}} 
\newcommand{\bff}{\mathbf{f}} 
\newcommand{\bu}{\mathbf{u}}
\newcommand{\bw}{\mathbf{w}}
\newcommand{\0}{\mathbf{0}}
\newcommand{\1}{\mathbf{1}}
\newcommand{\diag}{\text{diag}}
\newcommand{\sign}{\mathrm{sign}} 
\newcommand{\proj}{\mathbf{Proj}} 
\newcommand{\prob}{\mathbf{Prob}} 
\newcommand{\one}{\iota} 
\newcommand{\hB}{\mathcal{B}} 
\newcommand{\hL}{\mathcal{L}}
\newcommand{\hH}{\mathcal{H}}
\newcommand{\hN}{\mathcal{N}}
\newcommand{\xopt}{\bx_\mathbf{opt}}
\newcommand{\st}{\mbox{s.t.}}
\newcommand{\bU}{\mathbf{U}} 
\newcommand{\bbE}{\mathbb{E}}
\newcommand{\bbR}{\mathbb{R}}
\newcommand{\quotes}[1]{``#1''}
\begin{document}

\title{Adaptive Primal-Dual Stochastic Gradient Method for Expectation-constrained Convex Stochastic Programs}

\titlerunning{Adaptive Primal-Dual Stochastic Gradient Method}

\author{Yonggui Yan \and Yangyang Xu} 

\institute{Y. Yan, Y. Xu \at  Department of Mathematical Sciences, Rensselaer Polytechnic Institute, Troy, NY 12180\\
\email{\{yany4, xuy21\}@rpi.edu}}

\date{\today}

\maketitle
 
 \begin{abstract}
Stochastic gradient methods (SGMs) have been widely used for solving stochastic optimization problems. A majority of existing works assume no constraints or easy-to-project constraints.  
In this  paper, we  consider  convex stochastic optimization problems with expectation constraints. For these problems, it is often extremely expensive to perform projection onto the feasible set. Several SGMs in the literature can be applied to solve the expectation-constrained stochastic problems. We propose a novel primal-dual type SGM based on the Lagrangian function. Different from existing methods, our method incorporates an adaptiveness technique to speed up convergence. At each iteration, our method inquires an unbiased stochastic subgradient of the  Lagrangian function, and then it renews the primal variables by an adaptive-SGM update and the dual variables by a vanilla-SGM update. We show that the proposed method has a convergence rate of $O(1/\sqrt{k})$ in terms of the objective error and the constraint violation. Although the convergence rate is the same as those of existing SGMs, we observe its significantly faster convergence than an existing non-adaptive primal-dual SGM and a primal SGM on solving the Neyman-Pearson classification and quadratically constrained quadratic programs. Furthermore, we modify the proposed method to solve convex-concave stochastic minimax problems, for which we perform adaptive-SGM updates to both primal and dual variables. A convergence rate of $O(1/\sqrt{k})$ is also established to the modified method for solving minimax problems in terms of primal-dual gap. Our code has been released at \url{https://github.com/RPI-OPT/APriD}.

\keywords{Stochastic gradient method, Adaptive methods, Expectation-constrained stochastic optimization, Saddle-point problem
}

\subclass{90C15 \and 90C52 \and 49M37 \and 65K05}
\end{abstract}

\section{Introduction}\label{sec1}
The stochastic approximation method can trace back to \cite{robbins1951stochastic}, where a root-finding problem is considered. Stochastic gradient methods (SGMs), as first-order stochastic approximation methods, have attracted a lot of research interests from both theory and applications. In the literature, a majority of works on SGMs focus on problems without constraints or with easy-to-project constraints.

In this work, we consider expectation-constrained convex stochastic programs, which are formulated as 
\begin{align}\label{eq:problem}
\min_{\bx\in X} \,&\, f_0(\bx) \equiv \bbE_{\xi_0}[F_0(\bx;\xi_0)], \nonumber \\
 \st \,&\,  f_i(\bx) \equiv  \bbE_{\xi_i}[F_i(\bx;\xi_i)] \leq 0,\, i \in [M].
\end{align}
Here, $[M]$ denotes $\{1,\ldots, M\}$, $X$ is a compact convex set in $\bbR^n$ that admits an easy projection, $\xi_i$ is a random variable and $f_i$ is a convex function on $X$ for each $i = 0, 1, ..., M$, and $\bbE_{\xi_i}$ takes the expectation about $\xi_i$. When $\xi_i$ is distributed uniformly on a finite set $\{\xi_{i1},\ldots,\xi_{iN_i}\}$ for each $i=0,1,\ldots,M$, \eqref{eq:problem} reduces to the finite-sum structured function-constrained problem
\begin{align}\label{eq:fs-problem}
\min_{\bx\in X} \,&\,  f_0(\bx) \equiv \frac{1}{N_0}\sum_{j=1}^{N_0}F_0(\bx;\xi_{0j})\nonumber\\
\st \,&\,  f_i(\bx) \equiv  \frac{1}{N_i}\sum_{j=1}^{N_i}F_i(\bx;\xi_{ij})\leq 0,\, i \in [M],
\end{align}
where each $N_i$ can be a big integer number.

Due to the possible nonlinearity of $\{f_i\}_{i=1}^M$, the expectation format, and the possibly big numbers $\{N_i\}_{i=1}^M$ and/or $M$, it can be extremely expensive to perform projection onto the feasible set of \eqref{eq:problem} or \eqref{eq:fs-problem}. Toward finding a solution of \eqref{eq:problem} or \eqref{eq:fs-problem} with a desired accuracy, we will design a primal-dual SGM that does not require the expensive projection. Our method will only require unbiased stochastic subgradients of the Lagrangian function of \eqref{eq:problem} or \eqref{eq:fs-problem} and the weighted projection onto $X$. To have fast convergence, we will incorporate into the designed method an adaptiveness technique, which has been extensively used in solving unconstrained problems, such as training deep learning models.

\subsection{Motivating applications}
Many  applications can be formulated into \eqref{eq:problem} or \eqref{eq:fs-problem}. We give a few concrete examples below.

 \vspace{0.2cm} 
\noindent\textbf{Neyman-Pearson classification.}~~One example  of  \eqref{eq:problem} is the Neyman-Pearson classification (NPC) \cite{rigollet2011neyman, scott2005neyman}.   
For a binary classification problem, a classifier $h(\bx;\ba)$, parameterized by $\bx$, is learned by finding an optimal $\bx$ from a set of training data. Each training data point $\ba$ has a label 
$b\in\{+1,-1\}$.  The classifier categorizes a new sample $\ba$ to the ``$+1$'' class  if $h(\bx,\ba)\geq 0$ and  ``$-1$'' otherwise.  The prediction incurs two types of errors: type-I error and type-II error. The former is also called false-positive error, defined as  
\[
R^{-}(\bx) = \prob(h(\bx;\ba)\geq 0 \mid b=-1 ) = \bbE[\one_{\geq0}(h(\bx;\ba))\mid b=-1]
\]
and the latter is also called false-negative error, defined as 
\[R^{+}(\bx) =  \prob(-h(\bx;\ba)> 0\mid b=+1)=\bbE[\one_{\geq0}(-h(\bx;\ba))\mid b=+1].\] 
Here, $b$ denotes the true label of $\ba$, and $\one_{\geq 0}$ denotes the indicator function of nonnegative numbers, i.e., $\one_{\geq 0}(z) = 1$ if $z\geq 0$ and $\one_{\geq 0}(z) = 0$ if $z<0$. 
In certain applications, the cost of making these two types of error could be severely different. For example, in medical diagnosis, missing a malignant tumor is much more concerned than diagnosing a benign tumor to a malignant one. Hence, for these applications, it would be beneficial to minimize the more concerned type of error while controlling another type of error at an acceptable level. 
NPC aims at minimizing the type-II error by controlling the type-I error.  It searches for the best parameter $\bx$  
in a given space $X$ 
by solving the error-constrained problem:
\begin{equation} \label{eq:NPproblem}
\min_{\bx\in X} \,  R^+(\bx), \quad \st \quad  R^-(\bx) \leq c,
\end{equation}
where $c>0$ is a user-specified level of error. 
Due to the nonconvexity of the indicator function $\one_{\geq0}(z)$, the problem in \eqref{eq:NPproblem} is often computationally intractable. Hence, it is common (cf. \cite{rigollet2011neyman}) to relax the indicator function $\one_{\geq0}(z)$ 
 by a convex  non-decreasing surrogate $\varphi(z):\bbR\rightarrow \bbR^+$.  
 This way,  if $h(\bx;\ba)$ is affine with respect to $\bx$ in a convex set  $X$, we can obtain a relaxed convex stochastic program in the form of \eqref{eq:problem} by relaxing $\one_{\geq0}(z)$ to $\varphi(z)$ with $z=h(\bx;\ba)$ and  $z=-h(\bx;\ba)$ respectively in the formulations of $R^-$ and $R^+$.
 
 \vspace{0.2cm} 
\noindent\textbf{Fairness-constrained classification.}~~
The data involved in a classification problem may contain certain sensitive features, such as gender and race. Fairness-constrained classification \cite{zafar2017fairness, zafar2019fairness} aims at finding a classifier that is fair to the samples with different sensitive features. For example, consider  the binary classification  problem.  
Let $(\ba, z)$ denote a data point, following a certain distribution and having a label $b\in\{+1, -1\}$.
Here, $z\in\{0,1\}$  represents a unidimensional binary sensitive feature. 
Suppose $h(\bx; \ba, z)$ is a classifier, parameterized  by $\bx$, such that a given sample $(\ba, z)$ is predicted to class $\widehat{b}=\sign(h(\bx; \ba, z))$. 
One can learn a fair classifier $h(\bx;\ba,z)$ by solving the problem:
\begin{align}
\min_{\bx\in X} \, L(\bx), \quad \st\quad   \prob(\widehat{b}\neq b \mid z=0) =  \prob(\widehat{b}\neq  b \mid z=1),  \label{eq:fairness}
\end{align}
where $L(\bx)$ measures a certain expected loss, e.g.,
$L(\bx)=\bbE\big[ \log\big(1+ \exp(-b \cdot h(\bx;\ba,z))\big) \big]$, and $X$ is a convex set. 
The constraint in \eqref{eq:fairness} is one kind of fairness constraint. More kinds of fairness constraints can be found in  \cite{zafar2019fairness}. Roughly speaking, a fairness constraint enforces that the classifier should not be correlated to the sensitive features. 
In general, the constraint in \eqref{eq:fairness} is nonconvex.  \cite{zafar2019fairness} introduces a relaxation  to \eqref{eq:fairness} by using  an  empirical  estimation  of the covariance between $z$ and $h(\bx;\ba,z)$, which results in the following problem:
\begin{align}\label{eq:fairness_app}
\min_{\bx \in X}\, \tilde L(\bx), \quad \st \quad   -c \le \frac{1}{n} \sum_{i=1}^n  (z_i-\bar{z})h(\bx; \ba_i,z_i) \leq c. 
\end{align}
Here, $\tilde L$ is an empirical loss, $\bar{z} =  \bbE[z]$, $\{(\ba_i, z_i)\}_{i=1}^n$ is a set of training samples, and $c>0$  is a given threshold  that trades off accuracy and unfairness.  If $h(\bx;\ba,z)$ is affine about $\bx$, then the constraint in \eqref{eq:fairness_app} will be convex about $\bx$, and in this case, \eqref{eq:fairness_app} with a convex loss function $\tilde L(\bx)$ becomes a convex problem in the form of \eqref{eq:fs-problem}.  

\vspace{0.2cm} 
\noindent\textbf{Chance-constrained problems.}~~
Another example is the chance-constrained problem. It can be modeled as (cf. \cite{shapiro2014lectures})   
\begin{equation}\label{eq:chance-prob}
\min_{\bx\in X} \,  f(\bx),\quad
\st\quad  \prob\{F(\bx,\xi)\leq 0\} 
\geq p,
\end{equation}
where $\xi\in \Xi$ is  a random vector, $X$ is  a convex  set, and $p\in(0,1)$ is  a user-specified probability value.  
Notice $ \prob\{F(\bx,\xi)\leq 0\}=1- \prob\{F(\bx,\xi)> 0\} =1-\bbE [\one_{> 0}(F(\bx,\xi))]$, where $\one_{> 0}$ denotes the indicator function of positive numbers. We can rewrite \eqref{eq:chance-prob} as  
\begin{equation}\label{eq:chance-prob2}
\min_{\bx\in X} \,  f(\bx),\quad
\st\quad \bbE[\one_{> 0}(F(\bx,\xi))]\leq 1- p.
\end{equation}
One approach to solve \eqref{eq:chance-prob2} is the sample average approximation method \cite{luedtke2008sample, pagnoncelli2009sample}. It approximates  the expectation  in  the  constraint by the empirical mean and forms the approximation problem:
\begin{align}
\min_{\bx\in X} \,  f(\bx),\quad \st\quad \frac{1}{N}\sum_{i=1}^N \one_{> 0} (F(\bx,\xi_i))\leq 1-\widehat{p},\label{eq:Chance_SAA}
\end{align}
where $\widehat{p}$ is carefully selected such that the solution of \eqref{eq:Chance_SAA} can satisfy certain desired property. When $f$ is convex, $F$ is linear about $\bx$, and the indicator function $\one_{>0}$  is relaxed by a convex surrogate, \eqref{eq:chance-prob2} and \eqref{eq:Chance_SAA} are convex in the form of  \eqref{eq:problem} and \eqref{eq:fs-problem} respectively. 

Another approach to solve \eqref{eq:chance-prob} is the scenario approximation  \cite{calafiore2006scenario, calafiore2005uncertain}, which takes $M$  independent samples $\xi_1,\xi_2,\xi_3,...,\xi_M$ of $\xi$ and forms the problem
\begin{align}
\min_{\bx\in X} \,  f(\bx),\quad \st\quad F(\bx,\xi_i)\leq 0, i \in [M]. \label{eq:Chance_scenario}
\end{align}
When $f(\bx)$ and  $F(\bx,\xi)$ are convex about $\bx\in X$  for all $\xi$,  \eqref{eq:Chance_scenario} becomes one instance of \eqref{eq:problem}  with $f_i(\bx)=F(\bx,\xi_i)$ for each $i$.  

\vspace{0.2cm}
\noindent\textbf{More.}~~More examples of \eqref{eq:problem} include the risk averse optimization (cf. \cite[Chapter 6]{shapiro2014lectures}), asset-allocation problems (cf. \cite{rockafellar2000optimization}), and the monotonicity constrained machine learning \cite{gupta2016monotonic}.    
 
\subsection{Related Work} \label{sec:literature} 
Several existing methods can be applied to solve \eqref{eq:problem} or \eqref{eq:fs-problem}. The method that we will propose is of primal-dual type and adopts an adaptiveness technique to have faster convergence. Below we review the existing works that are closely related to ours.

\subsubsection{Existing methods for solving \eqref{eq:problem} or \eqref{eq:fs-problem}}
 
Lan  and  Zhou  \cite{lan2020algorithms} proposes a cooperative stochastic approximation (CSA) method  for stochastic programs that have  one function  constraint. Let $g(\bx)  = \sum_{i=1}^M [f_i(\bx)]_+$. The constraint $\{\bx: f_i(\bx)  \leq 0, \forall\, i\in [M]\}$ can be reformulated to $g(\bx) \leq 0$. Hence, CSA can be applied to \eqref{eq:problem} with this reformulation.  
At the $k$-th  iterate $\bx^k$ for all $k\ge0$, CSA  first  calculates an unbiased estimator $\widehat{G}_k$ of $g(\bx^k) $. Then depending  on  whether  $\widehat{G}_k\leq  \eta_k$ or not for  some tolerance $\eta_k>0$, it moves either along the negative direction of an unbiased stochastic approximation of the subgradient $\tilde\nabla f_0(\bx^k)$ 
or $\tilde\nabla g(\bx^k)$.  Given a maximum iteration $K$, CSA takes a weighted average of all iterates indexed by $\hB^K = \{s\leq k \leq K \mid\widehat{G}_k\leq  \eta_k  \}$ for some $1\leq  s\leq K$. An ergodic convergence of $O(1/\sqrt{K})$ can be shown for CSA under the convexity assumption. 

When $M$ is big in \eqref{eq:problem} and $f_i$ is a deterministic function for each $i\in [M]$, one can apply the primal-dual SGM in \cite{xu2020primal} or its adaptive variant.  
The method in \cite{xu2020primal} is derived  by using  the augmented Lagrangian 
 function.  At the $k$-th iteration, it inquires an unbiased  stochastic subgradient $(\bu^k,\bw^k)$ of the 
  augmented Lagrangian function and then performs the updates 
\begin{equation}\label{eq:xu_x_update}
\bx^{k+1} = \proj_{X, D_k}(\bx^k-D_k^{-1}\bu^k),\quad
\bz^{k+1} = \bz^k+\rho_k\bw^k, 
\end{equation} 
where $\rho_k>0$ is a dual step size, and $D_k$ is a positive definite matrix. Two settings of $D_k$ are provided and analyzed in \cite{xu2020primal}. One is a non-adaptive setting with $D_k = \frac{I}{\alpha_k}$, where $I$ is the identity matrix and $\alpha_k>0$. 
The other is an adaptive setting with  $D_k = \diag(\bv^k)+\frac{I}{\alpha_k}$ and  $\bv^k = \eta\sqrt{\sum_{t=1}^k \frac{(\bu^t)^2}{\gamma_t^2}}$, where $\alpha_k>0, \eta > 0$ and $\gamma_t = \max\{1,\|\bu^t\|\}$. 
Under both settings, the method has an ergodic convergence rate of $O(1/\sqrt k)$ for the convex case. Notice that 
for the expectation or finite-sum constrained problems, the 
 augmented Lagrangian function will have the compositional expectation or finite-sum terms, which prohibit an easy access to an unbiased stochastic subgradient \cite{wang2017stochastic, xu2019katyusha}. Hence, the method in \cite{xu2020primal} will not apply to \eqref{eq:problem} with expectation constraints or \eqref{eq:fs-problem} with big numbers $\{N_i\}_{i=1}^M$.

If an easy projection can be performed onto $X_i = \{\bx\in \bbR^n: f_i(\bx)\leq 0\}$ for each $i \in [M]$, one can also apply the random projection methods \cite{wang2016stochastic, wang2015random} to \eqref{eq:problem}, and in addition, if the exact gradient of $f_0$ can be obtained, the stochastic proximal-proximal method in \cite{ryu2017proximal} can also be applied. However, if $X_i, i \in[M]$ are not simple sets  such  as  the logistic loss function induced constraint  set in  NPC \cite{rigollet2011neyman}, these random projection methods will be inefficient.
For the finite-sum constrained problem \eqref{eq:fs-problem}, level-set based methods are studied in \cite{lin2018level-fs}. For the special case of \eqref{eq:problem} where exact gradients of all the functions $\{f_i\}_{i=0}^M$ can be accessed, deterministic first-order methods have been studied in several works, e.g., \cite{xu2017first, xu2019iteration, yu2016primal, lin2018level-SIOPT, aybat2013augmented, lu2018iteration}. However, for \eqref{eq:fs-problem} with big numbers $\{N_i\}_{i=0}^N$, these deterministic methods would be inefficient, as computing exact gradients at each iteration is very expensive. 

Deterministic or stochastic gradient methods (e.g., \cite{nemirovski2009robust, hien2017inexact, hamedani2018iteration, hamedani2018primal, zhao2019optimal}) have also been studied for saddle-point problems, which can include the function-constrained problems as special cases if certain constraint qualifications hold. For example, one can solve the problem in \eqref{eq:problem} by the  saddle-point mirror stochastic approximation (MSA), which is proposed by Nemirovski et al.  \cite{nemirovski2009robust} to solve convex-concave stochastic saddle-point problems. Assuming the strong duality, we can reformulate \eqref{eq:problem} as a saddle-point problem by using the Lagrangian function $\hL(\bx,\bz)$; see \eqref{eq:lagrangian} below. Applying MSA to the reformulation will yield the following iterative update:
\begin{align}
\begin{pmatrix} \bx^{k+1} \\ 
\bz^{k+1} \end{pmatrix} =  \proj_{X \times Z}  
\left[\begin{pmatrix} \bx^{k} \\ \bz^{k} \end{pmatrix} - \gamma_k 
\begin{pmatrix} \bu^k \\  -\bw^k\end{pmatrix}\right],\label{eq:MSA}
\end{align} 
where $(\bu^k,\bw^k)$ is an unbiased stochastic subgradient of $\hL(\bx^k,\bz^k)$, $Z$ is an estimated compact set on the dual variable, and $\gamma_k>0$ is a step size. Given a maximum iteration $K$, MSA with $\gamma_k=\gamma/\sqrt{K}, \forall\, k$ for some $\gamma>0$ can achieve an ergodic convergence rate $O(1/\sqrt{K})$ in terms of the primal-dual gap. 

\subsubsection{Adaptive SGMs } 
A key difference between the existing methods reviewed above and our method for solving \eqref{eq:problem} or \eqref{eq:fs-problem} is the use of an adaptiveness technique, which has been widely used to improve the empirical convergence speed of SGMs for machine learning, especially deep learning. 
The vanilla SGM is easy to implement, but its convergence is often slow. 
Many adaptive variants have been proposed to improve the empirical convergence speed, such as 
Adagrad  \cite{duchi2011adaptive}, RMSprop \cite{tieleman2012lecture}, Adam \cite{kingma2014adam}, Adadelta \cite{zeiler2012adadelta}, Nadam \cite{dozat2016incorporating}, and  Amsgrad \cite{sashank2018convergence}. 
In the vanilla SGM, the same learning rate is used to all coordinates of the variable at each update, and this often results in poor performance for solving a problem with sparse training data \cite{duchi2011adaptive}.
In contrast, adaptive methods adaptively scale  the learning rate for different coordinates of the variable, based on a certain average of all the stochastic gradients that have been generated. The arithmetic average is used by Adagrad  \cite{duchi2011adaptive} and the exponential moving average by others \cite{tieleman2012lecture,kingma2014adam,zeiler2012adadelta,dozat2016incorporating,sashank2018convergence}. 
Besides the adaptive learning rate, momentum-based search direction is adopted in adaptive SGMs.  
These adaptive methods have been used a lot but mainly for 
problems without constraints or with easy-to-project constraints.
 
The adaptiveness technique that will be used in  our method is closely related to the technique used by Amsgrad \cite{sashank2018convergence}.  For a stochastic problem  
$\min_{\bx\in X} \, f_0(\bx) = \bbE_{\xi_0}[F_0(\bx;\xi_0)]$,  
Amsgrad iteratively performs the updates: 
\begin{align}
    \bm^k & = \beta_{1}\bm^{k-1} + (1-\beta_{1})\bu^k,   \nonumber\\
    \bv^k  & = \beta_2 \bv^{k-1} + (1-\beta_2)(\bu^k)^2, \nonumber \\
   \widehat{\bv}^k & = \max(\widehat{\bv}^{k-1}, \bv^k),  \label{eq:max_v} \\ 
   \bx^{k+1} &= \proj_{X,(\widehat{\bv}^k)^{1/2}}(\bx^k - \alpha_k \bm^k/(\widehat{\bv}^k)^{1/2}), \nonumber
\end{align}
where $\bu^k$ is an unbiased stochastic gradient of $f_0$ at $\bx^k$. Slightly different from Amsgrad, Adam \cite{kingma2014adam} sets $\widehat{\bv}^k=\bv^k$ for all $k$. It turns out that the setting in \eqref{eq:max_v} is important, as otherwise the method is not guaranteed to converge even for convex problems.   
The update  in \eqref{eq:max_v} ensures that each component of $ \widehat{\bv}^k$ is nondecreasing with respect to $k$. This addresses the divergence issue of Adam. However, one drawback is that once an  unusual $\bu^k$ with ``big'' components appears in early iterations,  the components of $ \widehat{\bv}^k$ will remain ``big'', which results small effective learning rates and can slow the convergence. We address this issue by using the clipping technique in \cite{abadi2016deep} to modify the update of $ \widehat{\bv}^k$. 

\subsection{Contributions}
The main contributions are listed below.
\begin{itemize}
\item We propose a new  adaptive primal-dual stochastic gradient method (APriD) for solving expectation-constrained convex  stochastic optimization problems. The method is derived based on  the ordinary Lagrangian function of the underlying problem. During each iteration, APriD first inquires an unbiased stochastic subgradient of the Lagrangian function about both primal and dual variables, and then it performs a stochastic subgradient descent update to the primal variable and a stochastic gradient ascent update to the dual variable. For the primal update, an adaptive learning rate is applied, in order to have faster convergence.   
\item We analyze the convergence rate of APriD. Assuming the existence of a Karush-Kuhn-Tucker (KKT) solution, we prove the boundedness of  the dual iterate in expectation. Based on that, we then establish the $O(1/\sqrt{k})$ ergodic convergence rate, where  $k$ is the number of subgradient inquiries. The  convergence rate results are in terms of primal and dual objective value and primal constraint violation.
\item We also extend APriD to solve a convex-concave minimax problem. The extended method (named as APriAD), during each iteration, first inquires an unbiased estimation of a subgradient of the minimax problem and then updates the primal  and dual variables by performing an adaptive update. An ergodic convergence rate of $O(1/\sqrt k)$ is also established in terms of the primal-dual gap.
\item We  test APriD on NPC \cite{scott2005neyman, rigollet2011neyman} and the quadratically constrained quadratic program (QCQP) in the expectation form and the scenario approximation form. We compare the proposed method to CSA \cite{lan2020algorithms} and MSA \cite{nemirovski2009robust}. Although all three methods have the same-order convergence rate, the numerical results demonstrate that APriD can decrease the objective error and the constraint violations (sometimes significantly) faster than CSA and MSA, in terms of iteration number and the running time. 
\end{itemize}

\subsection{Notation}
We use bold lower-case letters $\bx,\bz$ for vectors and $x_i,z_i$ for their $i$-th components. The bold number $\0,\1$ denote the all-zero vector and all-one vector, respectively. 
We use the convection $0/0 = 0$.
For any positive integer $M$, $[M]$ is short for the set $\{1,2,...,M\}$. 
With slight abuse of notation, for any two vectors $\bx$ and $\by$, $\lvert \bx\rvert $ takes element-wise absolute value, 
$\bx^p$ takes the element-wise $p$-{th} power, $\bx/\by$  takes the element-wise division, and $\max\{\bx,\by\}$ takes the element-wise maximum. We define $[\bx]_+ = \max\{\bx,\0\}$ and  $[\bx]_- = \max\{-\bx,\0\}$. $\bx\leq\by$ and $\bx\geq \by$ mean that the inequalities hold element-wisely.  

For a given symmetric positive semidefinite matrix $D$, we define the weighted product between $\bx, \bz$ as $\langle \bx,\bz\rangle_D = \bx^\top D\bz$.  Specially, when $D$ is  the  identity  matrix $I$, we briefly  denote $\langle \bx,\bz\rangle=\langle \bx,\bz\rangle_I  = \bx^\top\bz$.
For any vector $\bv$, $\mbox{diag}(\bv)$ denotes a diagonal matrix with $\bv$ on the diagonal.  For a nonnegative vector $\bv\geq \0$,
 $\langle \bx,\bz\rangle_{\bv} = \bx^\top \mbox{diag}(\bv)\bz$.
For any vector $\bx$, we denote  $\|\bx\| = \sqrt{\langle \bx,\bx\rangle} $,   $\|\bx\|_D = \sqrt{\langle \bx,\bx\rangle_D}$, $\|\bx\|_{\bv} = \sqrt{\langle \bx,\bx\rangle_{\bv}} $, $\|\bx\|_1 = \sum_{i}\lvert x_i\rvert $, and $\|\bx\|_\infty = \max_{i}\lvert x_i\rvert $. 
Similarly, for a closed convex set $X$, we denote the (weighted) projection onto $X$ as 
$\proj_{X}(\by) = \text{arg}\min_{\bx\in X} \, \|\bx-\by\|^2$, 
$\proj_{X,D}(\by) = \text{arg}\min_{\bx\in X} \, \|\bx-\by\|_D^2$
and $\proj_{X,\bv}(\by) = \text{arg}\min_{\bx\in X} \, \|\bx-\by\|_{\bv}^2$.

For a convex function $f(\bx)$, we denote $\tilde{\nabla} f(\bx)$ as one subgradient and $\partial f(\bx)$ as the set of all subgradients of $f$ at $\bx$. If $f$ is differentiable, we simply use $\nabla f(\bx)$
as the gradient. For a function $f(\bx,\by)$,  we denote the partial  gradient or partial subgradient about $\bx$ by $\nabla_\bx f(\bx,\by)$ or $\tilde{\nabla}_\bx f(\bx,\by)$ and the set of all partial subgradients by  $\partial_\bx f(\bx,\by)$.
In the analysis of our algorithm, we denote $\hH^k$ as all history information until the $k$-th iteration, i.e., $\hH^k  =  \{\bx^1,\bz^1,  \bx^2,  \bz^2,..., \bx^k,\bz^k\}$.

\section{Adaptive primal-dual stochastic gradient method} 

In this section, we give the details of our adaptive primal-dual stochastic gradient method. Since the algorithm only requires the stochastic subgradient of the Lagrangian function of the underlying problem, it can be applied to both \eqref{eq:problem}  and \eqref{eq:fs-problem} by specifying how to obtain the stochastic subgradient. Hence, we focus on the more general formulation in \eqref{eq:problem}.

Let $ \bff(\bx) = [f_1(\bx),f_2(\bx),...,f_M(\bx) ]^\top $ be the vector function that concatenates all the constraint functions in \eqref{eq:problem}. 
Then the Lagrangian function of \eqref{eq:problem} can be written as 
\begin{equation}\label{eq:lagrangian}
\hL (\bx,\bz) = f_0(\bx) + \bz^\top \bff(\bx), 
\end{equation}
where  $\bz = [z_1,z_2,...,z_M]^\top $ is the Lagrange multiplier or dual variable. The Lagrangian dual problem is  
\begin{align}
\max_{\bz\ge\0}\, \big\{d(\bz) \equiv \min_{\bx\in X} \,  \hL(\bx,\bz)\big\}.  \label{eq:dual_function}
\end{align}

By  the Lagrangian function in \eqref{eq:lagrangian}, we design an {\bf a}daptive  {\bf pri}mal-{\bf d}ual stochastic gradient method for solving \eqref{eq:problem}, named as APriD.  
The pseudocode is shown in  Algorithm~\ref{alg:APriD}. We assume an \emph{oracle} that can return an unbiased stochastic subgradient of $\hL$ at any inquiry point $(\bx,\bz)$. 
At each  iteration $k$, APriD first calls the oracle to obtain a stochastic subgradient $(\bu^k,\bw^k)$ of $\hL$ at $(\bx^k,\bz^k)$.  Then it updates the primal  variable by performing an adaptive-SGM step and the  dual  variable by a vanilla-SGM step. Our adaptive update is similar to Amsgrad \cite{sashank2018convergence}, and the difference is that we clip the primal stochastic gradient $\bu^k$ if its norm is greater than a user-specified parameter $\theta$. The clipping technique has been used in existing works about adaptive SGMs, e.g., \cite{luo2019adaptive}, to avoid potentially too-small effective learning rate.

\begin{algorithm}[htbp]
\caption{Adaptive primal-dual stochastic gradient (APriD) method for \eqref{eq:problem}}
\label{alg:APriD}
    \begin{algorithmic}[1]
    \State \textbf{Initialization:}  choose $\bx^1\in X$ and $\bz^1 \geq \0$, set $\bm^0 = \0, \bv^0 = \0, \widehat{\bv}^0 = \0$; 
   \State \textbf{Parameter setting:}  set the maximum number $K$ of iterations; choose $\beta_1, \beta_2 \in (0,1)$, $\theta>0$, non-increasing step sizes $\{\alpha_k\}_{k= 1}^K$  and $\rho_1>0$; if $\alpha_k=\alpha_1$ for $k\in[K]$,  i.e., $\alpha_k$ is a constant,  let $\eta_1 = \sum_{i=1}^\infty \alpha_i\beta_1^{i-1} = \frac{\alpha_1}{1-\beta_1}$; otherwise, let $\eta_1 = \sum_{i=1}^K \alpha_i\beta_1^{i-1}$.
      \For {$k = 1,2,...,K$}
      \State Call the oracle to obtain a stochastic subgradient $(\bu^k,\bw^k)$ of $\hL$ at $(\bx^k,\bz^k)$.
      \State Update the primal variable $\bx$ by 
              \begin{align} 
               \bm^k & = \beta_{1}\bm^{k-1} + (1-\beta_{1})\bu^k, \label{eq:m_update} \\
               \widehat{\bu}^k & =  \frac{\bu^k}{\max\big\{1, \frac{\|\bu^k\|}{\theta}\big\}}, \label{eq:u_hat}\\
               \bv^k  & = \beta_2 \bv^{k-1} + (1-\beta_2) (\widehat{\bu}^k)^2,   \label{eq:v_update1}\\
               \widehat{\bv}^k & = \max\big\{\widehat{\bv}^{k-1}, \bv^k\big\},\label{eq:v_update2} \\ 
               \bx^{k+1} &= \proj_{X,(\widehat{\bv}^k)^{1/2}}\big(\bx^k - \alpha_k \bm^k/(\widehat{\bv}^k)^{1/2}\big). \label{eq:x_update} 
              \end{align} 
                      
        \State If $k\geq 2$, update the step size $\rho_k$ by    
                	  \begin{align} 
			    \rho_k &= \frac{\rho_{k-1}}{\beta_1 + \frac{\alpha_{k-1} }{\eta_k}}, \text{ with } \eta_k = \frac{\eta_{k-1}-\alpha_{k-1}}{\beta_1}. \label{eq:rho_from_eta_update}
		   \end{align}    
	\State Update the dual variable $\bz$ by
        		   \begin{align} 
			\bz^{k+1} = [\bz^k + {\rho_k}\bw^k]_+.   \label{eq:z_update} 			            \end{align} 
       	      \EndFor  
    \end{algorithmic}
\end{algorithm}   

Below, we give a few facts about the step-size parameters of Algorithm~\ref{alg:APriD}. 
About the primal step size, the inequality in the following lemma will be used many  times in our analysis. 
\begin{lemma}\label{lem:beta_ineq} For any $1\leq j\leq t \leq K$,  we have 
\begin{align}
\alpha_j\leq \sum_{k=j}^t \alpha_k  \beta_1^{k-j}  \leq  \frac{\alpha_j}{1-\beta_1}.\label{eq:beta_ineq}
\end{align} 
\end{lemma}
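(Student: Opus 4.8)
The plan is to prove the two inequalities in \eqref{eq:beta_ineq} separately, each relying on a single structural property of the step-size sequence: the lower bound uses only positivity of the terms, while the upper bound uses the assumed non-increasing monotonicity of $\{\alpha_k\}_{k=1}^K$ together with the geometric decay factor $\beta_1 \in (0,1)$.

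For the lower bound, I would isolate the leading term of the sum. Since the index runs over $k = j, j+1, \ldots, t$, the term corresponding to $k = j$ equals $\alpha_j \beta_1^{0} = \alpha_j$. Because each $\alpha_k > 0$ and $\beta_1 > 0$, every remaining term $\alpha_k \beta_1^{k-j}$ with $k > j$ is nonnegative, so discarding them can only decrease the sum. This immediately yields $\alpha_j \leq \sum_{k=j}^t \alpha_k \beta_1^{k-j}$.

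For the upper bound, I would exploit that $\{\alpha_k\}$ is non-increasing, so that $\alpha_k \leq \alpha_j$ for every $k \geq j$. Replacing each $\alpha_k$ by $\alpha_j$ and factoring it out gives $\sum_{k=j}^t \alpha_k \beta_1^{k-j} \leq \alpha_j \sum_{k=j}^t \beta_1^{k-j}$. Re-indexing with $m = k - j$ turns the remaining sum into the truncated geometric series $\sum_{m=0}^{t-j} \beta_1^m$, which, since $\beta_1 \in (0,1)$, is bounded above by the full series $\sum_{m=0}^\infty \beta_1^m = \frac{1}{1-\beta_1}$. Chaining these estimates produces the claimed bound $\frac{\alpha_j}{1-\beta_1}$.

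There is no genuine obstacle here; the statement is an elementary estimate on a weighted partial geometric sum, and the only points requiring care are bookkeeping ones—verifying that the $k=j$ term is exactly $\alpha_j$, and that bounding the finite geometric sum by the infinite one is valid precisely because $0 < \beta_1 < 1$. The monotonicity of $\{\alpha_k\}$, guaranteed by the parameter setting in Algorithm~\ref{alg:APriD}, is the one hypothesis I must be careful to invoke for the upper bound, since without it the factoring step $\alpha_k \leq \alpha_j$ would fail.
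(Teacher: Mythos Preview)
Your proposal is correct and matches the paper's own proof essentially line for line: the lower bound comes from positivity of the terms (isolating the $k=j$ summand), and the upper bound comes from the non-increasing property $\alpha_k\le\alpha_j$ followed by bounding the truncated geometric series by $\sum_{m=0}^\infty \beta_1^m = \tfrac{1}{1-\beta_1}$.
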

\begin{proof} 
The first inequality is trivial from the positivity of $\{\alpha_j\}_{j= 1}^K$ and $\beta_1$. The second inequality follows from the non-increasing monotonicity  of $\{\alpha_j\}_{j=1}^K$ and by noting $\sum_{k=j}^t \alpha_k  \beta_1^{k-j} \leq \alpha_j\sum_{k=j}^{t} \beta_1^{k-j} = \alpha_j\sum_{k=0}^{t-j} \beta_1^{k} \leq \alpha_j\sum_{k=0}^\infty \beta_1^{k}$. 
The proof is finished by $\beta_1 \in (0,1)$.
\end{proof} 

About the  dual step size,  we show the next lemma, which will be used to analyze  the summation  of  $\big\|\bz^j-\bz\big\|^2$. 
The  proof of  the lemma is given in Appendix~\ref{app:rho}.
\begin{lemma}\label{lem:rho} The dual step size sequence $\{\rho_j\}_{j= 1}^K$ is non-increasing and
\begin{align}
&\frac{\sum_{k=j-1}^t \alpha_k  \beta_1^{k-(j-1)}}{\rho_{j-1}} - \frac{\sum_{k=j}^t \alpha_k  \beta_1^{k-j}}{\rho_j} \geq 0,  \quad \mbox{for any  }  2\leq j\leq t\leq K, \label{eq:rho_1}\\
&\rho_j\leq  \frac{ \rho_{1} \alpha_j}{\alpha_1(1\!-\!\beta_1)}, \quad \mbox{for any  }j \in [K]. \label{eq:rho_2}
\end{align}
Furthermore, if a constant primal step size is set, i.e.,  
$\alpha_j = \alpha_1,$ for all $j \in [K]$ for some $\alpha_1>0$, then $\eta_j = \frac{\alpha_1}{1-\beta_1}$ and $\rho_j = \rho_1$  for all $j \in [K]$, i.e.  the dual step size is also a constant.  
 \end{lemma}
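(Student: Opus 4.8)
The plan is to first pin down the auxiliary sequence $\{\eta_j\}$ that drives the dual step sizes, and then reduce every assertion about $\{\rho_j\}$ to an elementary fact about $\{\eta_j\}$. Rewriting the recursion in \eqref{eq:rho_from_eta_update} as $\eta_{k-1} = \alpha_{k-1} + \beta_1\eta_k$ (valid for $2\le k\le K$) and starting from $\eta_1 = \sum_{i=1}^K\alpha_i\beta_1^{i-1}$, a one-line induction gives the closed form $\eta_j = \sum_{i=j}^K \alpha_i\beta_1^{i-j}$ for every $j\in[K]$ in the non-constant case; in the constant case the same recursion preserves the value $\eta_1=\alpha_1/(1-\beta_1)$, so $\eta_j\equiv\alpha_1/(1-\beta_1)$. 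In either case I record two facts I will use repeatedly: the two-sided bound $\alpha_j\le\eta_j\le \alpha_j/(1-\beta_1)$ (from Lemma~\ref{lem:beta_ineq} with $t=K$ in the non-constant case, and by direct inspection in the constant case), and, writing $S_j^t := \sum_{k=j}^t\alpha_k\beta_1^{k-j}$ for the truncated sum, the inequality $\eta_j\ge S_j^t$ for all $t\le K$ (since the missing tail is a sum of nonnegative terms).

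The key reduction is to linearize the $\rho$-recursion. Using $\eta_{k-1}=\alpha_{k-1}+\beta_1\eta_k$ I observe $\beta_1+\alpha_{k-1}/\eta_k = (\beta_1\eta_k+\alpha_{k-1})/\eta_k = \eta_{k-1}/\eta_k$, so \eqref{eq:rho_from_eta_update} collapses to $\rho_k=\rho_{k-1}\,\eta_k/\eta_{k-1}$. Consequently $\rho_j/\eta_j$ is constant in $j$, which yields the single clean identity $\rho_j = (\rho_1/\eta_1)\,\eta_j$. From here every conclusion reduces to a property of $\{\eta_j\}$. For monotonicity I compute $\eta_j-\eta_{j+1}=\alpha_j-(1-\beta_1)\eta_{j+1}\ge \alpha_j-\alpha_{j+1}\ge 0$ for $1\le j\le K-1$, using the upper bound $\eta_{j+1}\le\alpha_{j+1}/(1-\beta_1)$ and that $\{\alpha_k\}$ is non-increasing; hence $\{\eta_j\}$, and therefore $\{\rho_j\}$, is non-increasing. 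For \eqref{eq:rho_2} I simply bound $\rho_j=(\rho_1/\eta_1)\eta_j\le (\rho_1/\alpha_1)\cdot\alpha_j/(1-\beta_1)$ via $\eta_1\ge\alpha_1$ and $\eta_j\le\alpha_j/(1-\beta_1)$.

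The one genuinely non-trivial inequality is \eqref{eq:rho_1}, whose numerators are exactly $S_{j-1}^t$ and $S_j^t$. Substituting $\rho_{j-1}=(\rho_1/\eta_1)\eta_{j-1}$ and $\rho_j=(\rho_1/\eta_1)\eta_j$, the claim becomes $S_{j-1}^t/\eta_{j-1}\ge S_j^t/\eta_j$, i.e. $S_{j-1}^t\eta_j\ge S_j^t\eta_{j-1}$. I will prove this by a cross-multiplication trick exploiting that $S_{j-1}^t=\alpha_{j-1}+\beta_1 S_j^t$ has the same affine form as $\eta_{j-1}=\alpha_{j-1}+\beta_1\eta_j$: the $\beta_1 S_j^t\eta_j$ terms cancel and the difference telescopes to $S_{j-1}^t\eta_j-S_j^t\eta_{j-1}=\alpha_{j-1}(\eta_j-S_j^t)\ge 0$, where nonnegativity is exactly the fact $\eta_j\ge S_j^t$ recorded above. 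Finally, for the constant-step case the identity $\rho_j=(\rho_1/\eta_1)\eta_j$ together with $\eta_j\equiv\alpha_1/(1-\beta_1)$ gives $\rho_j=\rho_1$ for all $j$, completing the ``furthermore'' claim.

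The main obstacle is conceptual rather than computational: recognizing the telescoping identity $\beta_1+\alpha_{k-1}/\eta_k=\eta_{k-1}/\eta_k$ that linearizes the quotient recursion for $\rho_k$ and exposes the invariant $\rho_j/\eta_j$. Once this is in hand, monotonicity and \eqref{eq:rho_2} are immediate from the two-sided bound on $\eta_j$, and \eqref{eq:rho_1} rests solely on the single cross-multiplication estimate $S_{j-1}^t\eta_j-S_j^t\eta_{j-1}=\alpha_{j-1}(\eta_j-S_j^t)$; the only care required is keeping the non-constant and constant definitions of $\eta_1$ separate when verifying the bounds on $\eta_j$.
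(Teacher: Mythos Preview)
Your proof is correct and follows essentially the same route as the paper's: both derive the closed form $\eta_j=\sum_{i=j}^K\alpha_i\beta_1^{i-j}$, rewrite the $\rho$-recursion as $\rho_j=\rho_{j-1}\eta_j/\eta_{j-1}$ (the paper telescopes this to $\rho_j=(\eta_j/\eta_1)\rho_1$ only when proving \eqref{eq:rho_2}, whereas you state the invariant $\rho_j/\eta_j=\rho_1/\eta_1$ upfront), and then reduce \eqref{eq:rho_1} to the comparison $\eta_j\ge S_j^t$ between the full and truncated sums. The only cosmetic difference is that the paper bounds the one-step ratio directly to get \eqref{eq:rho_1}, while you pass through the closed form and cross-multiply; the underlying inequality $\alpha_{j-1}(\eta_j-S_j^t)\ge 0$ is the same in both.
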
 

 \section{Convergence analysis}
In this section, we analyze the convergence of Algorithm~\ref{alg:APriD}.  
We first  give  the assumptions  in subsection~\ref{subsection:assu} and then show some preparatory lemmas in subsection  \ref{subsection:lem}. In subsection~\ref{subsection:conv}, we give the main convergence results, including a uniform bound on $\bbE[\bz^k]$.   
  
\subsection{Technical assumptions}\label{subsection:assu}
Throughout our analysis in this section, we  make the following three assumptions.
\begin{assumption}\label{assu:compact}
$X$ is a compact convex set in $\bbR^n$, i.e. there exists a constant $B$  such that 
\[
\|\bx_1-\bx_2\|_\infty \leq B, \quad \forall\, \bx_1,\bx_2\in X. 
\]
\end{assumption} 
For any $\bx\in \bbR^n$, $\|\bx\|\leq \sqrt{n}\|\bx\|_\infty$, and from Assumption~\ref{assu:compact},  it holds
\begin{equation}
\|\bx_1-\bx_2\|^2 \leq nB^2, \quad\forall\, \bx_1,\bx_2\in X.  \label{eq:nB2}
\end{equation}

\begin{assumption}\label{assu:bound}
There are constants $P, Q, F$ such that for any $ k\in[K]$,
\[\bbE\big[\bu^k\mid \hH^k\big] \in \partial_\bx \hL(\bx^k,\bz^k),  \quad \bbE\big[\|\bu^k\|^2\big] \leq P\cdot \bbE\big[\big\|\bz^k\big\|^2\big]+Q, \]
\[\bbE\big[\bw^k\mid\hH^k\big] = \bff(\bx^k) = \nabla_\bz \hL(\bx^k,\bz^k) , 
\quad  \bbE\big[\|\bw^k\|^2\big] \leq F^2.  \]
\end{assumption}
By Assumption~\ref{assu:bound}, we have for any $k\in[K]$,
\begin{equation}\label{eq:P_hat}
\max_{j\in [k]}  \bbE\big[\|\bu^j\|^2\big] \leq P\cdot\left(\max_{j\in [k]}\bbE\big[\|\bz^j\|^2\big]\right)+Q\equiv\widehat{P}_\bz^k.
\end{equation}
\begin{assumption}\label{assu:kkt}
There exists a primal-dual solution $(\bx^*,\bz^*)$ satisfying the KKT conditions of \eqref{eq:problem}:
\begin{align}
& \0 \in \partial f_0(\bx^*) + \hN_X(\bx^*) + \sum_{i=1}^{M}z_i^*\partial f_i(\bx^*), \label{eq:kkt1}\\
& \bx^* \in X,\quad  f_i(\bx^*)\leq 0, \quad \forall i \in [M], \label{eq:kkt2}\\
& z^*_i \geq 0,\quad  z^*_if_i(\bx^*) = 0, \quad \forall i \in [M], \label{eq:kkt3}
\end{align}
where  $\hN_X(\bx)$ denotes the normal cone of $X$ at $\bx$.
\end{assumption}

In Assumption~\ref{assu:bound}, the unbiasedness condition on the stochastic gradients is standard in the literature of SGMs. As $X$ is compact, the uniform boundedness on $\bbE\big[\|\bw^k\|^2\big]$ can hold if $\bff$ is continuous. However, the bound on $\bbE\big[\|\bu^k\|^2\big]$ often depends on the $\bz$-variable. To see this, 
we suppose that $\bu^{k}_{i}$ is an unbiased stochastic subgradient of $f_i$ at $\bx^k$ for each $i=0,1,\ldots, M$ and let
\begin{equation*}
\bu^k = \bu^{k}_{0} +  \sum_{i=1}^M z_i^k\bu^{k}_{i}.
\end{equation*}
Then $\bu^k$ will be an unbiased stochastic subgradient of $\hL(\cdot, \bz^k)$ at $\bx^k$. Denote $\bU^k=[\bu^{k}_{1},\ldots,\bu^{k}_{M}]$. We have
\begin{align*}
\bbE\big[\|\bu^k\|^2\big]\leq & 2\bbE\big[\|\bu^{k}_{0}\|^2\big]+2\bbE\big[\|\bz^k\|^2\|\bU^{k}\|^2\big] \\
=& 2\bbE\Big[\bbE\big[\|\bu^{k}_{0}\|^2\mid\hH^k\big]\Big]+2\bbE\Big[\|\bz^k\|^2 \bbE\big[\|\bU^{k}\|^2 \mid \hH^k\big]\Big]
\end{align*}
As $X$ is compact, $\bbE\big[\|\bu^{k}_{0}\|^2\mid \hH^k\big]$ and $\bbE\big[\|\bU^{k}\|^2\mid\hH^k\big]$ can be uniformly bounded by some constants $P$ and $Q$, and thus the boundedness condition on $\bbE\big[\|\bu^k\|^2\big]$ holds.   

Assumption~\ref{assu:kkt} is satisfied if a certain constraint qualification holds such as the Slater's condition.    
Under Assumption~\ref{assu:kkt}, the strong duality holds, i.e.
\begin{align}
d(\bz^*) =  f_0(\bx^*). \label{eq:strong_dual}
\end{align} 
where $d$ is defined in \eqref{eq:dual_function}.

\subsection{Preparatory lemmas}\label{subsection:lem}

In this subsection, we establish a few lemmas, whose proofs are given in Appendixes~\ref{app:3.1}-\ref{app:cov}.
First, we have the following lemma under  Assumption~\ref{assu:kkt}.
\begin{lemma}\label{lem:3.1} 
For any primal-dual solution $(\bx^*,\bz^*)$ satisfying the KKT conditions \eqref{eq:kkt1}-\eqref{eq:kkt3}, we have that for any $\bx\in X$,
\begin{align} \label{eq:2.41}
f_0(\bx)-f_0(\bx^*)  + \langle \bz^*,\bff(\bx)\rangle \geq 0.
\end{align}
Furthermore, for any $\bz\geq 0$, 
\begin{align} \label{eq:2.4}
f_0(\bx)-f_0(\bx^*) - \langle \bz, \bff(\bx^*)\rangle  + \langle \bz^*,\bff(\bx)\rangle \geq 0.
\end{align}
\end{lemma}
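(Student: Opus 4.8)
The plan is to derive both inequalities from the single observation that the stationarity condition \eqref{eq:kkt1} makes $\bx^*$ a global minimizer of the convex function $\bx\mapsto\hL(\bx,\bz^*)$ over $X$. First I would note that, since each $f_i$ is convex on $X$ and $z_i^*\geq 0$ by \eqref{eq:kkt3}, the map $\bx\mapsto\hL(\bx,\bz^*)=f_0(\bx)+\langle\bz^*,\bff(\bx)\rangle$ is convex on $X$. Condition \eqref{eq:kkt1} states exactly that $\0\in\partial_\bx\hL(\bx^*,\bz^*)+\hN_X(\bx^*)$, which is the standard first-order optimality condition for minimizing a convex function over the convex set $X$. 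Hence $\hL(\bx,\bz^*)\geq\hL(\bx^*,\bz^*)$ for every $\bx\in X$, i.e.
\begin{align*}
f_0(\bx)+\langle\bz^*,\bff(\bx)\rangle \geq f_0(\bx^*)+\langle\bz^*,\bff(\bx^*)\rangle.
\end{align*}

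Next I would eliminate the right-hand term using complementary slackness. By \eqref{eq:kkt3}, $z_i^* f_i(\bx^*)=0$ for each $i\in[M]$, so $\langle\bz^*,\bff(\bx^*)\rangle=\sum_{i=1}^M z_i^* f_i(\bx^*)=0$. Substituting this and rearranging yields \eqref{eq:2.41}, namely
\begin{align*}
f_0(\bx)-f_0(\bx^*)+\langle\bz^*,\bff(\bx)\rangle\geq 0.
\end{align*}

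For \eqref{eq:2.4}, I would simply add a nonnegative quantity to \eqref{eq:2.41}. For any $\bz\geq\0$, primal feasibility \eqref{eq:kkt2} gives $f_i(\bx^*)\leq 0$, hence $\langle\bz,\bff(\bx^*)\rangle=\sum_{i=1}^M z_i f_i(\bx^*)\leq 0$, so that $-\langle\bz,\bff(\bx^*)\rangle\geq 0$. Adding this nonnegative term to the left-hand side of \eqref{eq:2.41} only increases it, which gives \eqref{eq:2.4}:
\begin{align*}
f_0(\bx)-f_0(\bx^*)-\langle\bz,\bff(\bx^*)\rangle+\langle\bz^*,\bff(\bx)\rangle\geq 0.
\end{align*}

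The argument is essentially routine, and the only step requiring care is the first one: justifying that the subdifferential inclusion \eqref{eq:kkt1} upgrades to the global inequality $\hL(\bx,\bz^*)\geq\hL(\bx^*,\bz^*)$ over all of $X$. This is where both the convexity of the Lagrangian in $\bx$ (guaranteed by $z_i^*\geq 0$) and the characterization of constrained minimizers of convex functions via the normal cone are invoked; everything afterward is elementary manipulation using the complementary slackness and feasibility parts of the KKT system.
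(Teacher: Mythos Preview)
Your proof is correct and follows essentially the same approach as the paper: both use KKT stationarity plus convexity to conclude that $\bx^*$ minimizes $\hL(\cdot,\bz^*)$ over $X$, then apply complementary slackness for \eqref{eq:2.41} and primal feasibility for \eqref{eq:2.4}. The only cosmetic difference is that the paper unpacks the minimization step by working with explicit subgradients $\tilde{\nabla}f_i(\bx^*)$ and the subgradient inequalities for each $f_i$, whereas you invoke the normal-cone characterization of constrained convex minimizers directly at the level of $\hL(\cdot,\bz^*)$.
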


Under Assumption~\ref{assu:bound} and by the updates in Algorithm \ref{alg:APriD}, we are able to upper bound $\bbE \big[\|(\widehat{\bv}^k)^{1/2}\|_1\big]$ and $ \bbE [\|\bm^{k}\|_{(\widehat\bv^k)^{-{1/2}}}^2]$, as summarized below.
\begin{lemma}\label{lem:bound_v}
 For any integer $k\in [K]$,
 \begin{align*} 
   \bbE \big[\|(\widehat{\bv}^k)^{1/2}\|_1\big] \leq n\theta, \quad  \bbE \big[\|\bm^{k}\|_{(\widehat\bv^k)^{-{1/2}}}^2\big] \leq   \frac{\sqrt{n}(\theta + \frac{\widehat{P}_\bz^k}{\theta})}{(1-\beta_2)^{1/2}},
\end{align*}  
where $\widehat{P}_\bz^k$ is  defined in  \eqref{eq:P_hat}.
\end{lemma}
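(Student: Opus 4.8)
The plan is to treat the two bounds separately, since the first is essentially deterministic while the second carries all of the difficulty and must reconcile the clipped and unclipped quantities.

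For the first bound I would exploit the clipping step \eqref{eq:u_hat}, which guarantees $\|\widehat{\bu}^j\|\leq\theta$ for every $j$, so coordinatewise $(\widehat{u}^j_i)^2\leq\|\widehat{\bu}^j\|^2\leq\theta^2$. Unrolling the recursion \eqref{eq:v_update1} from $\bv^0=\0$ gives $v^k_i=(1-\beta_2)\sum_{j=1}^k\beta_2^{k-j}(\widehat{u}^j_i)^2$, and the geometric sum then yields $v^k_i\leq\theta^2$ for every coordinate $i$ and every $k$. Because \eqref{eq:v_update2} makes $\widehat{\bv}^k$ the running coordinatewise maximum, we also get $\widehat{v}^k_i=\max_{l\le k}v^l_i\leq\theta^2$, hence $(\widehat{v}^k_i)^{1/2}\leq\theta$. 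Summing over the $n$ coordinates gives $\|(\widehat{\bv}^k)^{1/2}\|_1\leq n\theta$ deterministically, and taking expectation finishes the first claim.

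For the second bound I would first unroll \eqref{eq:m_update} to $m^k_i=(1-\beta_1)\sum_{j=1}^k\beta_1^{k-j}u^j_i$ and apply Cauchy--Schwarz (splitting each weight $\beta_1^{k-j}$ into two square roots) to obtain $(m^k_i)^2\leq(1-\beta_1)\sum_{j=1}^k\beta_1^{k-j}(u^j_i)^2$, using $(1-\beta_1)\sum_{j=1}^k\beta_1^{k-j}=1-\beta_1^k\le1$. The delicate point is that this numerator involves the \emph{unclipped} $u^j_i$ while the denominator $\widehat{v}^k_i$ is built from the \emph{clipped} $\widehat{u}^j_i$; I would bridge them by writing $u^j_i=r_j\widehat{u}^j_i$ with $r_j=\max\{1,\|\bu^j\|/\theta\}$, so that $(u^j_i)^2=r_j^2(\widehat{u}^j_i)^2$. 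The crux is then to lower-bound the denominator without introducing a spurious geometric factor: rather than isolating a single lagged term, I would use $\widehat{v}^k_i=\max_{l\le k}v^l_i\ge v^j_i$ for $j\le k$ together with $v^j_i\ge(1-\beta_2)(\widehat{u}^j_i)^2$ (keeping only the most recent, $l=j$, term of \eqref{eq:v_update1}), giving $(\widehat{v}^k_i)^{1/2}\ge(1-\beta_2)^{1/2}|\widehat{u}^j_i|$ with \emph{no} $\beta_2^{(k-j)/2}$ decay. Hence $\frac{(u^j_i)^2}{(\widehat{v}^k_i)^{1/2}}\le\frac{r_j^2|\widehat{u}^j_i|}{(1-\beta_2)^{1/2}}$ (with the convention $0/0=0$), and summing over $i$ turns $\sum_i|\widehat{u}^j_i|=\|\widehat{\bu}^j\|_1\le\sqrt{n}\,\|\widehat{\bu}^j\|\le\sqrt{n}\,\theta$, while $r_j^2\theta=\max\{\theta,\|\bu^j\|^2/\theta\}\le\theta+\|\bu^j\|^2/\theta$.

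To assemble the pieces I would plug the last estimate into the Cauchy--Schwarz bound, take expectation, invoke $\bbE[\|\bu^j\|^2]\le\widehat{P}_\bz^k$ for all $j\le k$ from \eqref{eq:P_hat}, and use $(1-\beta_1)\sum_{j=1}^k\beta_1^{k-j}\le1$ once more; everything collapses to $\frac{\sqrt{n}(\theta+\widehat{P}_\bz^k/\theta)}{(1-\beta_2)^{1/2}}$, which is $\beta_1$-independent exactly because the running-maximum trick killed the $\beta_2$-decay. I expect the main obstacle to be precisely this clipped/unclipped mismatch in the denominator: the naive single-term lower bound $\widehat{v}^k_i\ge(1-\beta_2)\beta_2^{k-j}(\widehat{u}^j_i)^2$ would inject a $\beta_2^{-(k-j)/2}$ factor that, summed against $\beta_1^{k-j}$, would diverge unless one imposed an unwanted relation such as $\beta_1^2<\beta_2$; recognizing that $\widehat{\bv}^k$ being a running maximum lets one drop this decay is the key idea.
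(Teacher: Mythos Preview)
Your proposal is correct and follows essentially the same approach as the paper: both exploit the clipping to get $\widehat{\bv}^k\le\theta^2\1$ for the first bound, and for the second bound both use the running-maximum structure $\widehat{v}^k_i\ge v^j_i\ge(1-\beta_2)(\widehat{u}^j_i)^2$ to kill the $\beta_2$-decay, combine this with a Cauchy--Schwarz splitting of the $\beta_1$-weights, relate $\bu^j$ to $\widehat{\bu}^j$ via the scalar $r_j=\max\{1,\|\bu^j\|/\theta\}$, and pick up the dimension factor through $\|\cdot\|_1\le\sqrt{n}\|\cdot\|$. The only cosmetic difference is order of operations: the paper first lower-bounds the denominator by $(1-\beta_2)^{1/2}\max_{l\le k}|\widehat{u}^l_i|$ uniformly and then applies Cauchy--Schwarz at the vector level, whereas you apply Cauchy--Schwarz coordinatewise first and then use the $j$-specific lower bound on the denominator; both routes collapse to the identical final estimate.
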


From  the  update in \eqref{eq:x_update} about the primal variable, we have  the  following result. 
\begin{lemma} \label{lem:primal_x} 
For  any  $ \bx\in X$ and $t\in [K]$,
\begin{align}\label{eq:primal_x_bbE}
  (1-\beta_{1}) \sum_{j=1}^t \bbE\left[\big\langle\bx^{j} - \bx, \bu^{j} \big\rangle\right] \sum_{k=j}^t \alpha_k  \beta_1^{k-j}
\leq \frac{n\theta B^2}{2}   +  \frac{ \sqrt{n}(\theta + \frac{\widehat{P}_\bz^t}{\theta}) \sum_{j=1}^t \alpha_j^2 }{2(1\!-\!\beta_1)^2(1\!-\!\beta_2)^{1/2}}.
\end{align}
\end{lemma}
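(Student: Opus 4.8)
The plan is to start from the weighted--projection update \eqref{eq:x_update} and then convert the momentum--based terms into the pairing of $\bu^j$ with the same--index iterate $\bx^j$ that the right--hand side of \eqref{eq:primal_x_bbE} wants. Writing $D_k=\diag((\widehat{\bv}^k)^{1/2})$, the update reads $\bx^{k+1}=\proj_{X,D_k}(\bx^k-\alpha_k D_k^{-1}\bm^k)$, so its first--order optimality gives, for every $\bx\in X$, $\alpha_k\langle\bm^k,\bx^{k+1}-\bx\rangle\le\langle\bx^{k+1}-\bx^k,\bx-\bx^{k+1}\rangle_{D_k}$. Splitting $\bx^k-\bx=(\bx^{k+1}-\bx)+(\bx^k-\bx^{k+1})$, applying the three--point identity to the inner product, and using Young's inequality $\alpha_k\langle\bm^k,\bx^k-\bx^{k+1}\rangle\le\tfrac12\|\bx^{k+1}-\bx^k\|_{D_k}^2+\tfrac{\alpha_k^2}{2}\|\bm^k\|_{(\widehat{\bv}^k)^{-1/2}}^2$ to cancel the $\|\bx^{k+1}-\bx^k\|_{D_k}^2$ term, I obtain the per--iteration bound $\alpha_k\langle\bm^k,\bx^k-\bx\rangle\le\tfrac12\|\bx^k-\bx\|_{D_k}^2-\tfrac12\|\bx^{k+1}-\bx\|_{D_k}^2+\tfrac{\alpha_k^2}{2}\|\bm^k\|_{(\widehat{\bv}^k)^{-1/2}}^2$. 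Summing over $k\in[t]$ and telescoping the weighted distances --- here I use that each component of $\widehat{\bv}^k$ is nondecreasing in $k$ by the max--update \eqref{eq:v_update2}, together with $\|\bx^k-\bx\|_\infty\le B$ from Assumption~\ref{assu:compact} --- the distance terms collapse into $\tfrac{B^2}{2}\|(\widehat{\bv}^t)^{1/2}\|_1$, giving $\sum_{k=1}^t\alpha_k\langle\bm^k,\bx^k-\bx\rangle\le\tfrac{B^2}{2}\|(\widehat{\bv}^t)^{1/2}\|_1+\tfrac12\sum_{k=1}^t\alpha_k^2\|\bm^k\|_{(\widehat{\bv}^k)^{-1/2}}^2$.

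The step I expect to be the main obstacle is relating the left--hand side of \eqref{eq:primal_x_bbE} to this $\sum_k\alpha_k\langle\bm^k,\bx^k-\bx\rangle$, because the former pairs $\bu^j$ with $\bx^j$ at the same index (which is what later lets me invoke $\bbE[\bu^j\mid\hH^j]\in\partial_\bx\hL(\bx^j,\bz^j)$), whereas unrolling $\bm^k=(1-\beta_1)\sum_{j=1}^k\beta_1^{k-j}\bu^j$ only pairs $\bu^j$ with the \emph{later} iterate $\bx^k$. My plan to bridge this is an exact summation--by--parts identity. Setting $\gamma_j=\sum_{k=j}^t\alpha_k\beta_1^{k-j}$, the left side of \eqref{eq:primal_x_bbE} equals $(1-\beta_1)\sum_{j=1}^t\gamma_j\langle\bx^j-\bx,\bu^j\rangle$, and I would exploit the two matching recursions $\gamma_j=\alpha_j+\beta_1\gamma_{j+1}$ (with $\gamma_{t+1}=0$) and $(1-\beta_1)\bu^j=\bm^j-\beta_1\bm^{j-1}$ read off from \eqref{eq:m_update}. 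Substituting the latter, shifting the index in the $\beta_1\bm^{j-1}$ sum, and using the former to split $\gamma_j\langle\bm^j,\bx^j-\bx\rangle=\alpha_j\langle\bm^j,\bx^j-\bx\rangle+\beta_1\gamma_{j+1}\langle\bm^j,\bx^j-\bx\rangle$, the telescoping collapses to
\[
(1-\beta_1)\sum_{j=1}^t\gamma_j\langle\bx^j-\bx,\bu^j\rangle=\sum_{j=1}^t\alpha_j\langle\bm^j,\bx^j-\bx\rangle+\sum_{j=1}^{t-1}\beta_1\gamma_{j+1}\langle\bm^j,\bx^j-\bx^{j+1}\rangle .
\]
The first sum is exactly what the previous paragraph bounds; the correction sum is what remains to be controlled.

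For the correction I would bound $\langle\bm^j,\bx^j-\bx^{j+1}\rangle$ by Cauchy--Schwarz in the $D_j$--norm together with nonexpansiveness of the weighted projection, $\|\bx^{j+1}-\bx^j\|_{D_j}\le\alpha_j\|\bm^j\|_{(\widehat{\bv}^j)^{-1/2}}$ (using $\bx^j=\proj_{X,D_j}(\bx^j)$), which yields $|\langle\bm^j,\bx^j-\bx^{j+1}\rangle|\le\alpha_j\|\bm^j\|_{(\widehat{\bv}^j)^{-1/2}}^2$. Combined with $\gamma_{j+1}\le\alpha_{j+1}/(1-\beta_1)\le\alpha_j/(1-\beta_1)$ from Lemma~\ref{lem:beta_ineq} and the non--increasing monotonicity of $\{\alpha_k\}$, the correction is at most $\tfrac{\beta_1}{1-\beta_1}\sum_{j=1}^t\alpha_j^2\|\bm^j\|_{(\widehat{\bv}^j)^{-1/2}}^2$. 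Adding this to the earlier estimate produces the coefficient $\tfrac12+\tfrac{\beta_1}{1-\beta_1}=\tfrac{1+\beta_1}{2(1-\beta_1)}\le\tfrac{1}{2(1-\beta_1)^2}$ in front of $\sum_j\alpha_j^2\|\bm^j\|_{(\widehat{\bv}^j)^{-1/2}}^2$, which is precisely the factor appearing in the claim. Finally I would take expectations and apply Lemma~\ref{lem:bound_v}, using $\bbE[\|(\widehat{\bv}^t)^{1/2}\|_1]\le n\theta$ for the first term and $\bbE[\|\bm^j\|_{(\widehat{\bv}^j)^{-1/2}}^2]\le\sqrt{n}(\theta+\widehat{P}_\bz^j/\theta)/(1-\beta_2)^{1/2}$ for the second, together with the monotonicity $\widehat{P}_\bz^j\le\widehat{P}_\bz^t$ from \eqref{eq:P_hat} to pull the constant out of the $j$--sum. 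This gives exactly \eqref{eq:primal_x_bbE}. Everything outside the summation--by--parts identity is routine; that identity, together with the verification that the constant tightens to $1/(2(1-\beta_1)^2)$, is where the real work lies.
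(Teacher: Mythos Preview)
Your proof is correct and reaches the claimed bound, but it is organized differently from the paper's argument. The paper works from the projection optimality with anchor $\bx^{k+1}-\bx$ and recursively expands $\langle \bx^{k+1}-\bx,\bm^k\rangle$ through the momentum update to obtain the identity
\[
\langle \bx^{k+1}-\bx,\bm^k\rangle=\sum_{j=1}^k\beta_1^{k-j}\Big(\langle \bx^{j+1}-\bx^j,\bm^j\rangle+(1-\beta_1)\langle \bx^j-\bx,\bu^j\rangle\Big),
\]
then sums over $k$ and applies Young's inequality with weight $\gamma_j=\sum_{k=j}^t\alpha_k\beta_1^{k-j}$ so that the resulting $\tfrac12\|\bx^{j+1}-\bx^j\|_{(\widehat\bv^j)^{1/2}}^2$ terms cancel exactly against the negative contribution from the three-point identity; the constant $\tfrac{1}{2(1-\beta_1)^2}$ drops out directly from $\gamma_j^2\le\alpha_j^2/(1-\beta_1)^2$. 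You instead first derive a clean per-iteration bound on $\alpha_k\langle\bm^k,\bx^k-\bx\rangle$ (absorbing the step-length term via Young right away), and then bridge to the $\langle\bx^j-\bx,\bu^j\rangle$ sum by your summation-by-parts identity on the $\gamma_j$-recursion; the correction term $\beta_1\sum_j\gamma_{j+1}\langle\bm^j,\bx^j-\bx^{j+1}\rangle$ is handled not by Young-plus-cancellation but by the nonexpansiveness bound $\|\bx^{j+1}-\bx^j\|_{(\widehat\bv^j)^{1/2}}\le\alpha_j\|\bm^j\|_{(\widehat\bv^j)^{-1/2}}$, and the final constant is reached via $\tfrac{1+\beta_1}{2(1-\beta_1)}\le\tfrac{1}{2(1-\beta_1)^2}$. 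Both routes are valid; the paper's cancellation is slightly sharper at the level of the correction term, while your decomposition is more modular in that it separates the descent-lemma step from the momentum-unrolling step and avoids tracking the $\|\bx^{j+1}-\bx^j\|$ terms across the double sum.
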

   
From  the  update in \eqref{eq:z_update} about the dual  variable,  we have the follows.  
\begin{lemma} \label{lem:dual_z}
For any $\bz\geq \0$, $j \in [K]$, it holds
\begin{align}
 \big\langle\bz^{j} ,   \bff(\bx^j)\big\rangle \geq &  \big\langle \bz,   \bff(\bx^j)\big\rangle  +\frac{1}{2\rho_j}\Big(\big\|\bz^{j+1} - \bz\big\|^2   - \big\|\bz^j  -\bz\big\|^2\Big)  \nonumber\\
 & - \frac{{\rho_j}}{2} \big\|\bw^j\big\|^2 - \big\langle\bz^{j}-\bz,  \bw^j - \bff(\bx^j)\big\rangle. \label{eq:dual_z}
\end{align}
\end{lemma}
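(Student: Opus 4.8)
The plan is to recognize the dual update \eqref{eq:z_update} as a projected (stochastic) gradient ascent step onto the nonnegative orthant $C = \{\bz : \bz\geq\0\}$, and then extract the claimed inequality from the standard contraction property of the Euclidean projection. Concretely, \eqref{eq:z_update} reads $\bz^{j+1} = \proj_C(\bz^j + \rho_j\bw^j)$. Since every feasible $\bz\geq\0$ satisfies $\bz = \proj_C(\bz)$ and the projection onto a convex set is nonexpansive, I would first obtain
\begin{align*}
\big\|\bz^{j+1}-\bz\big\|^2 = \big\|\proj_C(\bz^j+\rho_j\bw^j)-\proj_C(\bz)\big\|^2 \leq \big\|\bz^j+\rho_j\bw^j-\bz\big\|^2.
\end{align*}

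Next I would expand the right-hand side as $\big\|\bz^j-\bz\big\|^2 + 2\rho_j\big\langle\bz^j-\bz,\bw^j\big\rangle + \rho_j^2\big\|\bw^j\big\|^2$, subtract $\big\|\bz^j-\bz\big\|^2$ from both sides, and divide by $2\rho_j>0$. Rearranging isolates the inner product
\begin{align*}
\big\langle\bz^j-\bz,\bw^j\big\rangle \geq \frac{1}{2\rho_j}\Big(\big\|\bz^{j+1}-\bz\big\|^2-\big\|\bz^j-\bz\big\|^2\Big) - \frac{\rho_j}{2}\big\|\bw^j\big\|^2.
\end{align*}

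To cast this into the stated form, I would split the stochastic direction into its mean plus noise, writing $\bw^j = \bff(\bx^j) + \big(\bw^j-\bff(\bx^j)\big)$ inside the inner product, so that $\big\langle\bz^j-\bz,\bw^j\big\rangle = \big\langle\bz^j-\bz,\bff(\bx^j)\big\rangle + \big\langle\bz^j-\bz,\bw^j-\bff(\bx^j)\big\rangle$. Substituting this decomposition, moving the noise term to the right, and using $\big\langle\bz^j-\bz,\bff(\bx^j)\big\rangle = \big\langle\bz^j,\bff(\bx^j)\big\rangle - \big\langle\bz,\bff(\bx^j)\big\rangle$ yields exactly \eqref{eq:dual_z}.

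This argument is essentially mechanical, so I do not anticipate a genuine obstacle; the only point requiring care is the very first step, namely the observation that $\bz = \proj_C(\bz)$ for every $\bz\geq\0$, which is what licenses applying nonexpansiveness against the fixed comparison point $\bz$. An equivalent and equally clean route would bypass nonexpansiveness and instead invoke the variational characterization of the projection, $\big\langle(\bz^j+\rho_j\bw^j)-\bz^{j+1},\,\bz-\bz^{j+1}\big\rangle\leq 0$ for all $\bz\in C$, together with the three-point identity; this leads to the same inequality and makes the role of the nonnegativity constraint on $\bz$ fully transparent.
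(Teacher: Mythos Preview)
Your proof is correct. The paper takes the alternative route you mention at the end: it starts from the variational characterization of the projection, $\big\langle \bz^{j+1}-\bz,\ \bz^{j+1}-(\bz^j+\rho_j\bw^j)\big\rangle\le 0$, applies the three-point identity to $\big\langle \bz^{j+1}-\bz,\ \bz^{j+1}-\bz^j\big\rangle$, bounds the cross term $\big\langle \bz^{j+1}-\bz^j,\ \rho_j\bw^j\big\rangle$ by Young's inequality, and then cancels the $\big\|\bz^{j+1}-\bz^j\big\|^2$ that appears on both sides. Your primary route via nonexpansiveness (using $\bz=\proj_C(\bz)$ for $\bz\ge\0$) is slightly more economical, since it never introduces the $\big\|\bz^{j+1}-\bz^j\big\|^2$ term in the first place; the paper's route, on the other hand, makes explicit where one could retain that term if a sharper inequality were needed. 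Either way the two arguments are standard and equivalent here.
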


Lemma~\ref{lem:primal_x} gives an upper bound of  $\sum_{j=1}^t \bbE\big[\big\langle\bx^{j} - \bx, \bu^{j} \big\rangle\big] \sum_{k=j}^t \alpha_k\beta_1^{k-j}$, the next  lemma gives a lower bound on this summation.  
\begin{lemma}\label{lem:sum_u}
For $\bz\geq \0$ and $t\in [K]$ we have
\begin{align} 
&  \sum_{j=1}^t \bbE\big[\big\langle\bx^{j} - \bx, \bu^{j}\big\rangle\big] \sum_{k=j}^t \alpha_k\beta_1^{k-j}\nonumber\\ 
\geq&  \sum_{j=1}^t \bbE\big[f_0(\bx^j)-f_0(\bx) - \big\langle\bz^j, \bff(\bx)\big\rangle  +  \big\langle\bz, \bff(\bx^j)\big\rangle\big] \sum_{k=j}^t \alpha_k\beta_1^{k-j}\nonumber\\
& + \sum_{j=1}^t \bbE\Big[-\big\langle\bz^{j}-\bz, \bw^j - \bff(\bx^j)\big\rangle +\big\langle\bx^{j} - \bx, \bu^{j}-\tilde{\nabla}_\bx \hL (\bx^j,\bz^j) \big\rangle  \Big] \sum_{k=j}^t \alpha_k\beta_1^{k-j} 
 \nonumber\\
& -\frac{ \alpha_1\bbE\big[\big\|\bz^1-\bz\big\|^2\big]}{2\rho_1(1-\beta_{1})}  - \frac{ \rho_1 F^2\sum_{j=1}^t \alpha_j^2}{2\alpha_1(1\!-\!\beta_1)^2} 
+ \frac{\alpha_t }{2\rho_t}\bbE\big[\big\|\bz^{t+1}-\bz\big\|^2 \big], \label{eq:sum_u}
\end{align}  
where $ \tilde{\nabla}_\bx \hL (\bx^j,\bz^j)  = \bbE\big[\bu^j\mid \hH^j\big] $.
\end{lemma}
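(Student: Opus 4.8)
The plan is to derive a single deterministic per-iteration inequality lower-bounding $\langle\bx^j-\bx,\bu^j\rangle$, and then aggregate it with the nonnegative weights $S_j:=\sum_{k=j}^t\alpha_k\beta_1^{k-j}$ before taking expectation. I note that $S_t=\alpha_t$, that $S_1\le\alpha_1/(1-\beta_1)$ and $S_j\le\alpha_j/(1-\beta_1)$ by Lemma~\ref{lem:beta_ineq}, and that $S_j,\rho_j,\alpha_j$ are all deterministic, so the weighted summation commutes with $\bbE$ and the only probabilistic input needed is $\bbE[\|\bw^j\|^2]\le F^2$.

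First I would split $\bu^j$ into its conditional mean and a noise part, writing $\langle\bx^j-\bx,\bu^j\rangle=\langle\bx^j-\bx,\tilde{\nabla}_\bx\hL(\bx^j,\bz^j)\rangle+\langle\bx^j-\bx,\bu^j-\tilde{\nabla}_\bx\hL(\bx^j,\bz^j)\rangle$, and then use convexity of $\hL(\cdot,\bz^j)$ to get $\langle\bx^j-\bx,\tilde{\nabla}_\bx\hL(\bx^j,\bz^j)\rangle\ge\hL(\bx^j,\bz^j)-\hL(\bx,\bz^j)=f_0(\bx^j)-f_0(\bx)+\langle\bz^j,\bff(\bx^j)\rangle-\langle\bz^j,\bff(\bx)\rangle$. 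The troublesome term $\langle\bz^j,\bff(\bx^j)\rangle$ is then replaced by its lower bound from Lemma~\ref{lem:dual_z}, which converts it into $\langle\bz,\bff(\bx^j)\rangle$ plus a telescoping dual-distance term $\frac{1}{2\rho_j}(\|\bz^{j+1}-\bz\|^2-\|\bz^j-\bz\|^2)$, a penalty $-\frac{\rho_j}{2}\|\bw^j\|^2$, and the dual noise term $-\langle\bz^j-\bz,\bw^j-\bff(\bx^j)\rangle$. Assembling these reproduces exactly the per-$j$ integrand of the claimed bound.

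Multiplying this inequality by $S_j\ge0$, summing over $j\in[t]$, and taking $\bbE$ yields the objective summation and the two noise summations verbatim; it remains to control the two aggregated nuisance sums. For the telescoping sum $\sum_{j=1}^t \frac{S_j}{2\rho_j}\bbE[\|\bz^{j+1}-\bz\|^2-\|\bz^j-\bz\|^2]$ I would apply Abel summation with $a_j:=S_j/(2\rho_j)$, obtaining $a_t\bbE[\|\bz^{t+1}-\bz\|^2]-a_1\bbE[\|\bz^1-\bz\|^2]+\sum_{j=2}^t(a_{j-1}-a_j)\bbE[\|\bz^j-\bz\|^2]$. Inequality \eqref{eq:rho_1} of Lemma~\ref{lem:rho} is precisely the statement $a_{j-1}-a_j\ge0$, so the cross terms are nonnegative and can be discarded; since $a_t=\alpha_t/(2\rho_t)$ and $a_1\le\alpha_1/(2\rho_1(1-\beta_1))$, this produces the positive terminal term $\frac{\alpha_t}{2\rho_t}\bbE[\|\bz^{t+1}-\bz\|^2]$ and the negative initial term $-\frac{\alpha_1\bbE[\|\bz^1-\bz\|^2]}{2\rho_1(1-\beta_1)}$. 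For the penalty sum, $\bbE[\|\bw^j\|^2]\le F^2$ combined with $\rho_j\le\rho_1\alpha_j/(\alpha_1(1-\beta_1))$ from \eqref{eq:rho_2} and $S_j\le\alpha_j/(1-\beta_1)$ bounds each $\frac{\rho_j S_j}{2}\bbE[\|\bw^j\|^2]$ by $\frac{\rho_1\alpha_j^2 F^2}{2\alpha_1(1-\beta_1)^2}$, delivering the term $-\frac{\rho_1F^2\sum_{j=1}^t\alpha_j^2}{2\alpha_1(1-\beta_1)^2}$.

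The main obstacle is the weighted telescoping: because the coefficients $S_j/\rho_j$ depend on $j$, an ordinary telescope does not apply and I must establish their monotonicity exactly, which is the entire purpose of Lemma~\ref{lem:rho}. Indeed, the dual step-size recursion \eqref{eq:rho_from_eta_update} is engineered so that $a_{j-1}-a_j\ge0$ holds and so that the boundary terms collapse into the stated clean form; everything else reduces to linearity of expectation and the elementary bounds of Lemma~\ref{lem:beta_ineq}.
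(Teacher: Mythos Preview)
Your proposal is correct and follows essentially the same route as the paper's proof: split $\bu^j$ into its conditional mean plus noise, apply convexity of $\hL(\cdot,\bz^j)$, invoke Lemma~\ref{lem:dual_z} to replace $\langle\bz^j,\bff(\bx^j)\rangle$, aggregate with the weights $S_j$, and then handle the two nuisance sums exactly as you describe---the weighted telescope via \eqref{eq:rho_1} and \eqref{eq:beta_ineq}, and the $\|\bw^j\|^2$ penalty via \eqref{eq:rho_2} and \eqref{eq:beta_ineq}. The paper carries the deterministic inequality through to the very end before taking expectation, whereas you take $\bbE$ right after summing, but this is cosmetic.
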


The second term  in  the right side of \eqref{eq:sum_u} can be bounded as follows.
\begin{lemma}\label{lem:cov} For any deterministic or stochastic vector $(\bx,\bz)$ with $\bx\in X$ and $\bz\geq \0$, it holds for any positive number sequence $\{\gamma_j\}_{j= 1}^K$, and $t\in [K]$ that 
\begin{align}
&~ \sum_{j=1}^t \gamma_j\bbE\left[\big\langle\bz^{j}-\bz,  \bw^j - \bff(\bx^j)\big\rangle - \big\langle\bx^{j} - \bx, \bu^{j}-\tilde{\nabla}_\bx \hL (\bx^j,\bz^j) \big\rangle\right] \nonumber\\
 \leq &~ \frac{1}{2}\bigg(\bbE\big[\big\|  \bz^1-\bz\big\|^2\big]+ n B^2 +  \big( F^2 + \widehat{P}_\bz^t\big) \sum_{j=1}^t \gamma_j^2\bigg),\label{eq:stochastic_x_z}
\end{align}
where $ \tilde{\nabla}_\bx \hL (\bx^j,\bz^j)  = \bbE\big[\bu^j\mid\hH^j\big]$ and  $\widehat{P}_\bz^t$ is defined in  \eqref{eq:P_hat}. Furthermore, if $\bx$ and $\bz$ are deterministic, then
 \begin{align}\label{eq:stochastic_x_z2}
 \sum_{j=1}^t \gamma_j\bbE\left[\big\langle\bz^{j}-\bz,  \bw^j - \bff(\bx^j)\big\rangle - \big\langle\bx^{j} - \bx, \bu^{j}-\tilde{\nabla}_\bx \hL (\bx^j,\bz^j) \big\rangle\right] = 0.
\end{align}
\end{lemma}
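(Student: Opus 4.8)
The plan is to treat the two assertions separately, getting \eqref{eq:stochastic_x_z2} as an easy special case and \eqref{eq:stochastic_x_z} by a martingale/auxiliary-sequence argument. Throughout, abbreviate the dual and primal noise by $\bg^j = \bw^j - \bff(\bx^j)$ and $\bh^j = \bu^j - \tilde{\nabla}_\bx \hL(\bx^j,\bz^j)$. By Assumption~\ref{assu:bound}, $\bbE[\bg^j\mid\hH^j] = \0$ and $\bbE[\bh^j\mid\hH^j] = \0$, so $\{\bg^j\}$ and $\{\bh^j\}$ are martingale-difference sequences with respect to the filtration $\{\hH^j\}$, while $\bx^j,\bz^j$ are $\hH^j$-measurable. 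This immediately gives \eqref{eq:stochastic_x_z2}: when $\bx,\bz$ are deterministic, $\bx^j-\bx$ and $\bz^j-\bz$ are $\hH^j$-measurable, so conditioning on $\hH^j$ and invoking the tower property kills every summand, e.g. $\bbE[\langle\bz^j-\bz,\bg^j\rangle] = \bbE[\langle\bz^j-\bz,\bbE[\bg^j\mid\hH^j]\rangle] = 0$, and likewise for the primal term.

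For \eqref{eq:stochastic_x_z} the difficulty is that $\bx$ and $\bz$ are now allowed to be stochastic and may be correlated with the noise, so the inner products no longer vanish. First I would use measurability to discard the iterate-dependent pieces: since $\bbE[\langle\bz^j,\bg^j\rangle] = 0$ and $\bbE[\langle\bx^j,\bh^j\rangle] = 0$ by the same conditioning argument, the left-hand side of \eqref{eq:stochastic_x_z} collapses to $\sum_{j=1}^t\gamma_j\,\bbE[\langle\bx,\bh^j\rangle - \langle\bz,\bg^j\rangle]$; that is, only the terms pairing the (possibly adversarial) $\bx,\bz$ against the noise survive.

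The core device is to introduce two predictable auxiliary sequences that absorb these surviving terms. On the dual side set $\bs^1 = \bz^1$ and $\bs^{j+1} = [\bs^j - \gamma_j\bg^j]_+$; on the primal side set $\by^1 = \bx^1$ and $\by^{j+1} = \proj_X(\by^j + \gamma_j\bh^j)$. Each $\bs^j,\by^j$ depends only on noise strictly before iteration $j$ and is therefore $\hH^j$-measurable. Applying the standard projection (nonexpansiveness) inequality to each recursion, using $\bz\geq\0$ and $\bx\in X$ respectively, yields for every $j$
\begin{align*}
-\gamma_j\langle\bz,\bg^j\rangle &\leq -\gamma_j\langle\bs^j,\bg^j\rangle + \tfrac12\big(\|\bs^j-\bz\|^2 - \|\bs^{j+1}-\bz\|^2\big) + \tfrac12\gamma_j^2\|\bg^j\|^2,\\
\gamma_j\langle\bx,\bh^j\rangle &\leq \gamma_j\langle\by^j,\bh^j\rangle + \tfrac12\big(\|\by^j-\bx\|^2 - \|\by^{j+1}-\bx\|^2\big) + \tfrac12\gamma_j^2\|\bh^j\|^2.
\end{align*}
Summing over $j = 1,\dots,t$, the quadratic differences telescope and are bounded by $\tfrac12\|\bs^1-\bz\|^2 = \tfrac12\|\bz^1-\bz\|^2$ and $\tfrac12\|\by^1-\bx\|^2 = \tfrac12\|\bx^1-\bx\|^2$.

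Finally I would take expectations. The cross terms $\bbE[\langle\bs^j,\bg^j\rangle]$ and $\bbE[\langle\by^j,\bh^j\rangle]$ vanish by the tower property, because $\bs^j,\by^j$ are $\hH^j$-measurable and $\bg^j,\bh^j$ are martingale differences. The initial primal term is bounded by $nB^2$ via \eqref{eq:nB2} since $\bx^1,\bx\in X$. For the noise second moments, conditional variance domination gives $\bbE[\|\bg^j\|^2]\leq\bbE[\|\bw^j\|^2]\leq F^2$ and $\bbE[\|\bh^j\|^2]\leq\bbE[\|\bu^j\|^2]\leq\widehat{P}_\bz^t$ for $j\leq t$ by Assumption~\ref{assu:bound} and \eqref{eq:P_hat}. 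Collecting these reproduces exactly the right-hand side of \eqref{eq:stochastic_x_z}. The main obstacle is precisely the correlation between the stochastic $(\bx,\bz)$ and the noise; the argument hinges on constructing the predictable sequences $\{\bs^j\},\{\by^j\}$ so that the projection inequality converts the uncontrolled terms $\langle\bx,\bh^j\rangle,\langle\bz,\bg^j\rangle$ into a telescoping part plus a genuine martingale-difference part that integrates to zero, and the delicate point is ensuring the auxiliary iterates remain predictable with respect to the correct filtration.
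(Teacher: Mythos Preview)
Your proof is correct and follows essentially the same auxiliary-sequence martingale argument as the paper. The only minor variation is that the paper uses \emph{unconstrained} auxiliary recursions $\tilde{\bz}^{j+1}=\tilde{\bz}^j-\gamma_j\bg^j$ and $\tilde{\bx}^{j+1}=\tilde{\bx}^j+\gamma_j\bh^j$ together with the polarization identity, whereas you project your auxiliary iterates onto $\{\bz\ge\0\}$ and $X$ and invoke the projection inequality; the projection is unnecessary here but harmless, and both routes telescope to the identical bound.
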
 

\subsection{Main convergence results}\label{subsection:conv}

In this subsection, we use the previous established lemmas to show our main convergence results. First, we obtain the following theorem by combining the inequalities in \eqref{eq:primal_x_bbE} and \eqref{eq:sum_u}. 

\begin{theorem} 
\label{thm:converge}
Under Assumptions~\ref{assu:compact}-\ref{assu:kkt} and with the choice of $\bz^1=\0$, for any $\bx\in X$, $\bz\geq \0$, $t\in [K]$,
\begin{align}
 \bbE\big[f_0(\bar{\bx}^t)-f_0(\bx) - \big\langle\bar{\bz}^t, \bff(\bx)\big\rangle +\big\langle\bz, \bff(\bar{\bx}^t)\big\rangle\big]
 \leq \epsilon_\bz+\epsilon_0\bbE[\|\bz\|^2],\label{eq:converge}
\end{align}
where $\bar{\bx}^t = \frac{ \sum_{j=1}^t\sum_{k=j}^t \alpha_k  \beta_1^{k-j}   \bx^j}{\sum_{j=1}^t \sum_{k=j}^t \alpha_k  \beta_1^{k-j}  }$, $\bar{\bz}^t = \frac{  \sum_{j=1}^t\sum_{k=j}^t \alpha_k  \beta_1^{k-j}  \bz^j}{\sum_{j=1}^t \sum_{k=j}^t \alpha_k  \beta_1^{k-j}}$,  and 
\begin{align*}
 \epsilon_\bz =&  \frac{1}{2(1\!-\!\beta_1) \sum_{k=1}^t \alpha_k } \bigg(   n(\theta+1) B^2  +   \frac{ \sqrt{n}(\theta + \frac{\widehat{P}_\bz^t}{\theta}) \sum_{j=1}^t \alpha_j^2}{(1\!-\!\beta_1)^2(1\!-\!\beta_2)^{1/2}}   \\
 &  \hspace{2cm} + \frac{\rho_1F^2 \sum_{j=1}^t \alpha_j^2}{\alpha_1(1\!-\!\beta_1)}  + \big( F^2 + \widehat{P}_\bz^t\big) \sum_{j=1}^t \alpha_j^2\bigg),\\
 \epsilon_0 =&
 \frac{1}{2(1-\beta_{1})  \sum_{k=1}^t \alpha_k} \Big(\frac{\alpha_1}{\rho_1} + 1   \Big).
\end{align*}
\end{theorem}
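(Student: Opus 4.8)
The plan is to sandwich the weighted sum $S := \sum_{j=1}^t \bbE\big[\langle \bx^j-\bx,\bu^j\rangle\big]\, w_j$, where $w_j := \sum_{k=j}^t\alpha_k\beta_1^{k-j}$ and $W:=\sum_{j=1}^t w_j$, between the primal upper bound \eqref{eq:primal_x_bbE} and the lower bound \eqref{eq:sum_u}, and then to read off the two opposing weighted averages $\bar\bx^t,\bar\bz^t$ through Jensen's inequality. First I would record the Jensen step: since each $f_i$ is convex, $\bz\ge\0$, and $\bar\bx^t=\frac{1}{W}\sum_j w_j\bx^j$, $\bar\bz^t=\frac{1}{W}\sum_j w_j\bz^j$ are convex combinations, we have pathwise $f_0(\bar\bx^t)\le\frac1W\sum_j w_j f_0(\bx^j)$ and $\langle\bz,\bff(\bar\bx^t)\rangle\le\frac1W\sum_j w_j\langle\bz,\bff(\bx^j)\rangle$, while the terms linear in the iterates are exact; hence
\[ \bbE\big[f_0(\bar\bx^t)-f_0(\bx)-\langle\bar\bz^t,\bff(\bx)\rangle+\langle\bz,\bff(\bar\bx^t)\rangle\big]\le \frac1W\sum_{j=1}^t w_j\,\bbE\big[f_0(\bx^j)-f_0(\bx)-\langle\bz^j,\bff(\bx)\rangle+\langle\bz,\bff(\bx^j)\rangle\big]. \]
So it suffices to bound the right-hand side, which is exactly the first term on the right of \eqref{eq:sum_u} divided by $W$.

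Next I would eliminate $S$. Rearranging \eqref{eq:sum_u} gives an upper bound on that weighted objective sum in terms of $S$, the covariance term $\sum_j w_j\bbE[\langle\bz^j-\bz,\bw^j-\bff(\bx^j)\rangle-\langle\bx^j-\bx,\bu^j-\tilde\nabla_\bx\hL(\bx^j,\bz^j)\rangle]$, the two dual-distance quantities $\frac{\alpha_1\bbE[\|\bz^1-\bz\|^2]}{2\rho_1(1-\beta_1)}$ and $\frac{\rho_1F^2\sum_j\alpha_j^2}{2\alpha_1(1-\beta_1)^2}$, and the nonnegative tail $\frac{\alpha_t}{2\rho_t}\bbE[\|\bz^{t+1}-\bz\|^2]$. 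I would then replace $S$ by $\frac{1}{1-\beta_1}$ times the right-hand side of \eqref{eq:primal_x_bbE} (Lemma~\ref{lem:primal_x} controls $(1-\beta_1)S$), and bound the covariance term by \eqref{eq:stochastic_x_z} of Lemma~\ref{lem:cov} taken with $\gamma_j=w_j$. Because the theorem allows $\bz$ to be random, I must use the general inequality \eqref{eq:stochastic_x_z} rather than the deterministic identity \eqref{eq:stochastic_x_z2}; this is precisely what lets the term $\frac12\bbE[\|\bz^1-\bz\|^2]$ survive and feeds $\bbE[\|\bz\|^2]$ into the final bound.

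To finish I would drop the nonnegative $\frac{\alpha_t}{2\rho_t}\bbE[\|\bz^{t+1}-\bz\|^2]$, substitute $\bz^1=\0$ so that $\|\bz^1-\bz\|^2=\|\bz\|^2$, divide through by $W$, and regroup. Lemma~\ref{lem:beta_ineq} supplies $\alpha_j\le w_j\le\frac{\alpha_j}{1-\beta_1}$, which lets me bound $\sum_j w_j^2$ by a multiple of $\sum_j\alpha_j^2$ and $W$ from below by $\sum_k\alpha_k$; together with $1-\beta_1\le1$ these convert every exponentially weighted sum into the plain quantities $\sum_k\alpha_k$ and $\sum_j\alpha_j^2$ appearing in $\epsilon_\bz,\epsilon_0$. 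Collecting the two terms proportional to $\bbE[\|\bz\|^2]$ (one from the covariance bound, one from the $\frac{\alpha_1}{\rho_1}$ dual-distance term) yields $\epsilon_0\bbE[\|\bz\|^2]$, and the remaining deterministic terms assemble into $\epsilon_\bz$.

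The main obstacle is the clean cancellation of the auxiliary sum $S$: its upper bound comes from the adaptive primal update \eqref{eq:x_update} (Lemma~\ref{lem:primal_x}) and its lower bound from the convexity of $\hL$ together with the dual update \eqref{eq:z_update} (Lemma~\ref{lem:sum_u}), and the two must be matched through the common exponential weights $w_j$. A secondary but delicate point is the bookkeeping of the $(1-\beta_1)$ powers when passing from the $w_j$-weighted sums to $\sum_k\alpha_k$ and $\sum_j\alpha_j^2$ via Lemma~\ref{lem:beta_ineq}, together with ensuring that the entire stochastic-subgradient error is absorbed into the covariance term of Lemma~\ref{lem:cov}, so that only the conditional expectations $\tilde\nabla_\bx\hL(\bx^j,\bz^j)=\bbE[\bu^j\mid\hH^j]$ and $\bff(\bx^j)=\bbE[\bw^j\mid\hH^j]$ enter the objective estimate.
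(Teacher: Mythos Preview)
Your proposal is essentially the paper's own proof: the paper likewise sandwiches the weighted sum $S=\sum_{j=1}^t\bbE[\langle\bx^j-\bx,\bu^j\rangle]\,w_j$ between \eqref{eq:primal_x_bbE} and \eqref{eq:sum_u}, invokes \eqref{eq:stochastic_x_z} for the covariance term, uses $\bz^1=\0$, drops the nonnegative $\frac{\alpha_t}{2\rho_t}\bbE[\|\bz^{t+1}-\bz\|^2]$, applies Jensen to pass to $(\bar\bx^t,\bar\bz^t)$, and bounds $W\ge\sum_k\alpha_k$ via Lemma~\ref{lem:beta_ineq}.

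The only point where your bookkeeping diverges is the choice $\gamma_j=w_j$ in Lemma~\ref{lem:cov}. The paper first multiplies \eqref{eq:sum_u} through by $(1-\beta_1)$ so that the covariance weights become $(1-\beta_1)w_j\le\alpha_j$, and then applies \eqref{eq:stochastic_x_z} with $\gamma_j=(1-\beta_1)w_j$; this yields $\sum_j\gamma_j^2\le\sum_j\alpha_j^2$ directly and produces exactly the term $(F^2+\widehat P_\bz^t)\sum_j\alpha_j^2$ in $\epsilon_\bz$ and the factor $\frac{1}{2(1-\beta_1)\sum_k\alpha_k}$ in front of both $\epsilon_\bz$ and $\epsilon_0$. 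With your choice $\gamma_j=w_j$ and the subsequent bound $\sum_j w_j^2\le(1-\beta_1)^{-2}\sum_j\alpha_j^2$, you would land on slightly different (but same-order) constants rather than the stated $\epsilon_\bz,\epsilon_0$. Switching to $\gamma_j=(1-\beta_1)w_j$ recovers the exact statement; otherwise your argument is complete.
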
 

\begin{proof} 
From \eqref{eq:primal_x_bbE}, \eqref{eq:sum_u}, \eqref{eq:stochastic_x_z}, and $\bz^1=\0$, we have
\begin{align}
& \frac{n\theta B^2}{2 }  +   \frac{ \sqrt{n}(\theta + \frac{\widehat{P}_\bz^t}{\theta}) \sum_{j=1}^t \alpha_j^2 }{2(1\!-\!\beta_1)^2(1\!-\!\beta_2)^{1/2}}\nonumber\\ 
\overset{\eqref{eq:primal_x_bbE}}{\geq} & (1-\beta_{1}) \sum_{j=1}^t \bbE\big[\big\langle\bx^{j} - \bx, \bu^{j}\big\rangle\big] \sum_{k=j}^t \alpha_k\beta_1^{k-j}\nonumber\\ 
\overset{\eqref{eq:sum_u}, \eqref{eq:stochastic_x_z}}\geq&  (1-\beta_{1}) \sum_{j=1}^t \bbE\Big[f_0(\bx^j)-f_0(\bx) - \big\langle\bz^j, \bff(\bx)\big\rangle  +  \big\langle\bz, \bff(\bx^j)\big\rangle\Big] \sum_{k=j}^t \alpha_k\beta_1^{k-j} \nonumber\\ 
&  -\frac{ \alpha_1\bbE\big[\big\| \bz\big\|^2\big] }{2\rho_1}  - \frac{ \rho_1 F^2\sum_{j=1}^t \alpha_j^2}{2\alpha_1(1\!-\!\beta_1)} 
+ \frac{(1\!-\!\beta_1)\alpha_t}{2\rho_t} \bbE\big[\big\|\bz^{t+1}-\bz\big\|^2\big] \nonumber\\ 
& -\frac{1}{2}\bigg(\bbE\big[\big\| \bz\big\|^2\big]+ n B^2 +  \big( F^2 + \widehat{P}_\bz^t\big) \sum_{j=1}^t \alpha_j^2\bigg)  
, \label{eq:3.16}
\end{align}  
where in the second inequality, we have used \eqref{eq:stochastic_x_z} with $\gamma_j= \sum_{k=j}^t \alpha_k\beta_1^{k-j} (1-\beta_{1})\leq \alpha_j$  by  \eqref{eq:beta_ineq}. 

By the convexity of $\{f_i\}_{i\geq 0}$, we have from the  above inequality  that for any $\bz\geq \0$,
\begin{align*}
& \bbE\big[f_0(\bar{\bx}^t)-f_0(\bx) - \big\langle\bar{\bz}^t, \bff(\bx)\big\rangle + \big\langle\bz, \bff(\bar{\bx}^t)\big\rangle\big]\\
\leq& \bbE\bigg[\sum_{j=1}^t \Big(f_0(\bx^j)-f_0(\bx) - \big\langle\bz^j, \bff(\bx)\big\rangle  +  \big\langle\bz, \bff(\bx^j)\big\rangle\Big)\frac{\sum_{k=j}^t \alpha_k\beta_1^{k-j} }{\sum_{j=1}^t\sum_{k=j}^t \alpha_k  \beta_1^{k-j} } \bigg] \\
=&  \frac{1}{\sum_{j=1}^t\sum_{k=j}^t \alpha_k  \beta_1^{k-j} }  \sum_{j=1}^t \bbE\Big[f_0(\bx^j)-f_0(\bx) - \big\langle\bz^j, \bff(\bx)\big\rangle  +  \big\langle\bz, \bff(\bx^j)\big\rangle\Big] \sum_{k=j}^t \alpha_k\beta_1^{k-j} \\ 
\leq&  \frac{1}{2(1\!-\!\beta_1) \sum_{k=1}^t \alpha_k } \bigg( n(\theta+1) B^2  +  \frac{ \sqrt{n}(\theta + \frac{\widehat{P}_\bz^t}{\theta}) \sum_{j=1}^t \alpha_j^2}{(1\!-\!\beta_1)^2(1\!-\!\beta_2)^{1/2}}  + \frac{\rho_1F^2  \sum_{j=1}^t \alpha_j^2}{\alpha_1(1\!-\!\beta_1)}  \\
&  \hspace{2cm} + (1+\frac{\alpha_1}{\rho_1})\bbE\big[\big\| \bz\big\|^2\big]
 + \big( F^2 + \widehat{P}_\bz^t\big) \sum_{j=1}^t \alpha_j^2\bigg),
\end{align*} 
where in the last inequality, we have  used \eqref{eq:3.16}  for the numerator and the fact 
$\sum_{j=1}^t\sum_{k=j}^t \alpha_k  \beta_1^{k-j}  
 \geq \sum_{j=1}^t \alpha_j$ by  \eqref{eq:beta_ineq} for the denominator.
 Rearranging the above inequality gives the result in \eqref{eq:converge}.  \end{proof} 

In \eqref{eq:converge}, $\epsilon_0$ is a constant. In order to  bound   $\epsilon_\bz$ that depends on $\widehat{P}_\bz^t$ defined in \eqref{eq:P_hat}, we  need to bound  $\bbE[\|\bz^j\|^2]$, for all $ j \in [t]$. This  is shown in the  following theorem. 
\begin{theorem} 
\label{thm:bound_z_t}
Suppose Assumptions~\ref{assu:compact}-\ref{assu:kkt}  hold and $\bz^1=\0$. Assume that for a given positive number $K$, 
 $\sum_{j=1}^K \alpha_j^2\leq \widehat\alpha$, and $\rho_1 <\frac{\alpha_1(1\!-\!\beta_1)^4(1\!-\!\beta_2)^{1/2}\theta}{2\sqrt{n} P \widehat\alpha}$.
Then
$\bbE\big[\big\|\bz^{t}\big\|^2\big] $ for all $t\in [K+1]$ has a uniform bound,  i.e., 
\[
\bbE\big[\big\|\bz^{t}\big\|^2\big] \leq \frac{C_1}{1-C_2},
\]
where 
$C_1 = \frac{2\rho_{1}   }{ \alpha_1(1\!-\!\beta_1)^2}\left( n \theta B^2 +\frac{\sqrt{n}(\theta^2 + Q)\widehat\alpha}{(1\!-\!\beta_1)^2(1\!-\!\beta_2)^{1/2}  \theta}+ \frac{ \rho_1 F^2 \widehat\alpha}{\alpha_1(1\!-\!\beta_1)} \right) + \big(2 +  \frac{2}{(1\!-\!\beta_1)^2} \big)\big\|\bz^*\big\|^2$ and 
$C_2 = \frac{2\rho_1\sqrt{n} P\widehat\alpha}{\alpha_1(1\!-\!\beta_1)^4(1\!-\!\beta_2)^{1/2}\theta} $.
\end{theorem}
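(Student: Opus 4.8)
The plan is to derive a self-referential bound on $\bbE\big[\big\|\bz^{t+1}\big\|^2\big]$ by re-running the argument behind Theorem~\ref{thm:converge}, but specializing the free variables to the \emph{deterministic} KKT pair $(\bx,\bz)=(\bx^*,\bz^*)$ from Assumption~\ref{assu:kkt}. First I would combine \eqref{eq:primal_x_bbE} and \eqref{eq:sum_u} as in the proof of Theorem~\ref{thm:converge}, but instead of the loose covariance bound \eqref{eq:stochastic_x_z} I would invoke the exact identity \eqref{eq:stochastic_x_z2}, valid precisely because $\bx^*,\bz^*$ are deterministic, so the cross term vanishes rather than contributing extra constants. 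With $\bx=\bx^*$ and $\bz=\bz^*$, the objective-gap summation $\sum_{j}\bbE\big[f_0(\bx^j)-f_0(\bx^*)-\langle\bz^j,\bff(\bx^*)\rangle+\langle\bz^*,\bff(\bx^j)\rangle\big]\sum_{k=j}^t\alpha_k\beta_1^{k-j}$ is nonnegative by \eqref{eq:2.4} (applied at $\bx=\bx^j$, $\bz=\bz^j\geq\0$), so it can be dropped from the lower bound. Using $\bz^1=\0$ to replace $\bbE\big[\|\bz^1-\bz^*\|^2\big]$ by $\|\bz^*\|^2$, this leaves an inequality whose only surviving stochastic quantity is $\frac{(1-\beta_1)\alpha_t}{2\rho_t}\bbE\big[\big\|\bz^{t+1}-\bz^*\big\|^2\big]$.

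Next I would isolate that term. Rearranging and applying \eqref{eq:rho_2} in the form $\frac{\alpha_t}{\rho_t}\geq\frac{\alpha_1(1-\beta_1)}{\rho_1}$ turns the coefficient of $\bbE\big[\|\bz^{t+1}-\bz^*\|^2\big]$ into $\frac{\alpha_1(1-\beta_1)^2}{2\rho_1}$, and multiplying through by $\frac{2\rho_1}{\alpha_1(1-\beta_1)^2}$ gives an explicit bound on $\bbE\big[\|\bz^{t+1}-\bz^*\|^2\big]$. I would then pass to $\bbE\big[\|\bz^{t+1}\|^2\big]$ via $\|\bz^{t+1}\|^2\leq 2\|\bz^{t+1}-\bz^*\|^2+2\|\bz^*\|^2$, substitute $\widehat{P}_\bz^t=P\big(\max_{j\in[t]}\bbE[\|\bz^j\|^2]\big)+Q$ from \eqref{eq:P_hat}, and bound $\sum_{j=1}^t\alpha_j^2\leq\sum_{j=1}^K\alpha_j^2\leq\widehat\alpha$. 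Collecting the terms free of $\max_{j\in[t]}\bbE[\|\bz^j\|^2]$ into $C_1$ and the single term carrying $P\max_{j\in[t]}\bbE[\|\bz^j\|^2]$ into $C_2\cdot\max_{j\in[t]}\bbE[\|\bz^j\|^2]$ reproduces the stated constants and yields the recursion
\begin{align*}
\bbE\big[\big\|\bz^{t+1}\big\|^2\big]\leq C_1 + C_2\,\max_{j\in[t]}\bbE\big[\big\|\bz^j\big\|^2\big],\quad \forall\,t\in[K].
\end{align*}

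Finally I would close the recursion by induction on $t$. The assumption $\rho_1<\frac{\alpha_1(1-\beta_1)^4(1-\beta_2)^{1/2}\theta}{2\sqrt{n}P\widehat\alpha}$ is exactly what forces $C_2<1$, so $\frac{C_1}{1-C_2}\geq0$ is well defined. The base case $t=1$ holds since $\bz^1=\0$ gives $\bbE[\|\bz^1\|^2]=0\leq\frac{C_1}{1-C_2}$. If $\bbE[\|\bz^j\|^2]\leq\frac{C_1}{1-C_2}$ for all $j\in[t]$, then $\max_{j\in[t]}\bbE[\|\bz^j\|^2]\leq\frac{C_1}{1-C_2}$, and the recursion gives $\bbE[\|\bz^{t+1}\|^2]\leq C_1+C_2\frac{C_1}{1-C_2}=\frac{C_1}{1-C_2}$, completing the step. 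The main obstacle is conceptual rather than computational: the bound is self-referential, since $\widehat{P}_\bz^t$ depends on the very quantities $\bbE[\|\bz^j\|^2]$ being controlled, so one must first reduce everything to a clean recursion in $\max_{j\in[t]}\bbE[\|\bz^j\|^2]$ and then exploit the smallness of $\rho_1$ (through $C_2<1$) to break the circularity; matching the constants also hinges on using the exact cancellation \eqref{eq:stochastic_x_z2} for the deterministic pair $(\bx^*,\bz^*)$ rather than the looser bound employed inside Theorem~\ref{thm:converge}.
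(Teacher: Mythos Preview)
Your proposal is correct and follows essentially the same approach as the paper: specialize the combination of \eqref{eq:primal_x_bbE} and \eqref{eq:sum_u} to the deterministic KKT pair $(\bx^*,\bz^*)$, use \eqref{eq:stochastic_x_z2} to kill the cross term and \eqref{eq:2.4} to drop the nonnegative sum, isolate $\bbE[\|\bz^{t+1}-\bz^*\|^2]$ via \eqref{eq:rho_2}, pass to $\bbE[\|\bz^{t+1}\|^2]$, and close the resulting recursion by induction using $C_2<1$. The steps, the constants, and the order of the argument all match the paper's proof.
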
 
\begin{proof}  
As the first step in the proof of Theorem~\ref{thm:converge}, we obtain from  \eqref{eq:primal_x_bbE}   and  \eqref{eq:sum_u} with $\bx = \bx^*,\bz = \bz^*$ that  
\begin{align*}
& \frac{n\theta B^2}{2}  +  \frac{ \sqrt{n}(\theta + \frac{\widehat{P}_\bz^t}{\theta}) \sum_{j=1}^t \alpha_j^2 }{2(1\!-\!\beta_1)^2(1\!-\!\beta_2)^{1/2}}\overset{\eqref{eq:primal_x_bbE}}\geq (1-\beta_{1}) \sum_{j=1}^t \bbE\big[\big\langle\bx^{j} - \bx^*, \bu^{j}\big\rangle\big] \sum_{k=j}^t \alpha_k\beta_1^{k-j}\\
{\geq}& (1-\beta_{1})  \sum_{j=1}^t \bbE\big[f_0(\bx^j)-f_0(\bx^*) - \big\langle\bz^j, \bff(\bx^*)\big\rangle  +  \big\langle\bz^*, \bff(\bx^j)\big\rangle\big] \sum_{k=j}^t \alpha_k\beta_1^{k-j}
\nonumber\\
&-\frac{ \alpha_1\big\| \bz^*\big\|^2}{2\rho_1} - \frac{ \rho_1 F^2\sum_{j=1}^t \alpha_j^2}{2\alpha_1(1\!-\!\beta_1)}  + \frac{\alpha_t (1\!-\!\beta_1)}{2\rho_t}\bbE\big[\big\|\bz^{t+1}-\bz^*\big\|^2 \big] \nonumber\\
& + (1\!-\!\beta_{1}) \sum_{j=1}^t \bbE\Big[\!-\!\big\langle\bz^{j}\!-\!\bz^*,  \bw^j \!-\! \bff(\bx^j)\big\rangle  \!+\!\big\langle\bx^{j} \!-\! \bx^*, \bu^{j} \!-\! \tilde{\nabla}_\bx \hL (\bx^j,\bz^j) \big\rangle \Big]\! \sum_{k=j}^t \alpha_k\beta_1^{k-j}.
\end{align*} 
Since $\bx^*,\bz^*$  are  deterministic, the last term on the right side of the above inequality vanishes  
by Lemma~\ref{lem:cov}. In addition,  
we use \eqref{eq:2.4} and have from the above inequality that
\begin{align*}
  &\frac{\alpha_t (1-\beta_{1})}{2\rho_t}\bbE\big[\big\|\bz^{t+1}-\bz^*\big\|^2\big] \\
 \leq &   
   \frac{n\theta B^2}{2 } + \frac{ \sqrt{n}(\theta + \frac{\widehat{P}_\bz^t}{\theta}) \sum_{j=1}^t \alpha_j^2 }{2(1\!-\!\beta_1)^2(1\!-\!\beta_2)^{1/2}}
+\frac{ \alpha_1\big\| \bz^*\big\|^2}{2\rho_1} + \frac{\rho_1F^2\sum_{j=1}^t \alpha_j^2}{2\alpha_1(1\!-\!\beta_1)} .
\end{align*} 
By \eqref{eq:rho_2},  
$ \frac{2\rho_t}{\alpha_t(1\!-\!\beta_1)} \leq \frac{2 \rho_{1}\alpha_t }{\alpha_t \alpha_1(1\!-\!\beta_1)^2} = \frac{ 2\rho_{1} }{ \alpha_1(1\!-\!\beta_1)^2}$. 
Thus we have 
\begin{align*}
& \bbE\big[\big\|\bz^{t+1}-\bz^*\big\|^2\big]  \\
 \leq &   \frac{ \rho_{1}}{ \alpha_1(1\!-\!\beta_1)^2} \Big( 
 n\theta B^2  +   \frac{ \sqrt{n}(\theta + \frac{\widehat{P}_\bz^t}{\theta}) \sum_{j=1}^t \alpha_j^2 }{(1\!-\!\beta_1)^2(1\!-\!\beta_2)^{1/2}}
+\frac{ \alpha_1\big\| \bz^*\big\|^2}{\rho_1} + \frac{\rho_1F^2\sum_{j=1}^t \alpha_j^2}{\alpha_1(1\!-\!\beta_1)} 
\Big).
\end{align*} 
By $\big\|\bz^{t+1}\big\|^2 \leq 2(\big\|\bz^{t+1}-\bz^*\big\|^2+\big\|\bz^*\big\|^2)$, we obtain
\begin{align*}
\bbE\big[\big\|\bz^{t+1}\big\|^2\big]  \leq& \frac{2 \rho_{1}}{ \alpha_1(1\!-\!\beta_1)^2} \Big(n \theta B^2+ \frac{ \sqrt{n}(\theta + \frac{\widehat{P}_\bz^t}{\theta}) \sum_{j=1}^t \alpha_j^2 }{(1\!-\!\beta_1)^2(1\!-\!\beta_2)^{1/2}}+ \frac{\rho_1 F^2\sum_{j=1}^t \alpha_j^2 }{\alpha_1(1\!-\!\beta_1)} \Big) \\
&+\big(2 +  \frac{2}{(1\!-\!\beta_1)^2} \big)\big\|\bz^*\big\|^2\\
 \leq & C_1+C_2 \max_{k\in [t]}  \left(\bbE\big[\big\|\bz^k\big\|^2\big]\right),
\end{align*} 
where in the last inequality, we have used \eqref{eq:P_hat} and $\sum_{j=1}^t \alpha_j^2\leq\sum_{j=1}^K \alpha_j^2\leq \widehat\alpha$ for all $t\in[K]$.
Notice that $C_2<1$ from the selection of $\rho_1$. Below we show $\bbE\big[\big\|\bz^t\big\|^2\big] \leq \frac{C_1}{1-C_2}$ for all $t\in [K+1]$ by induction.
\begin{itemize}
 \item When $t = 1$, $\bz^1 = \0$, thus $\bbE[\|\bz^1\|^2] \leq \frac{C_1}{1-C_2}$ holds trivially.
 \item Assume that $\bbE\big[\big\|\bz^k\big\|^2\big] \leq \frac{C_1}{1-C_2}$ holds for $k\leq t$, then
\[\bbE\big[\big\|\bz^{t+1}\big\|^2\big] \leq C_1 +  C_2 \left(   \max_{k\in [t]}\bbE\big[\big\|\bz^{k}\big\|^2\big] \right)  \leq C_1+C_2\frac{C_1}{1-C_2} = \frac{C_1}{1-C_2}.\]
 \end{itemize}
 Therefore, we complete the induction and obtain the desired result.  
\end{proof} 

When  the conditions in Theorem~\ref{thm:bound_z_t} hold, we  have that for $k\in [K+1]$,
\begin{equation*}
\widehat{P}_\bz^k  = P\cdot\left(\max_{j\in [k]}\bbE\big[\|\bz^j\|^2\big]\right)+Q\leq P \frac{C_1}{1-C_2} +Q\equiv\widehat{P}.
\end{equation*}
Then  $\epsilon_\bz$  in  Theorem~\ref{thm:converge} can be bounded by a  constant as follows:
\begin{align}\label{eq:epsilon_1}
\epsilon_\bz \leq & \frac{1}{2(1\!-\!\beta_1) \sum_{k=1}^t \alpha_k } \bigg(   n(\theta+1) B^2  +   \frac{ \sqrt{n}(\theta + \frac{\widehat{P}}{\theta}) \sum_{j=1}^t \alpha_j^2}{(1\!-\!\beta_1)^2(1\!-\!\beta_2)^{1/2}} \nonumber\\
&  \hspace{2cm} + \frac{\rho_1F^2 \sum_{j=1}^t \alpha_j^2}{\alpha_1(1\!-\!\beta_1)}  + \big( F^2 + \widehat{P}\big) \sum_{j=1}^t \alpha_j^2\bigg)\equiv\epsilon_1.
\end{align}

\begin{remark}
In  Theorems \ref{thm:converge} and \ref{thm:bound_z_t}, we assume $\bz^1=\0$ in the initialization of Algorithm \ref{alg:APriD}, which simplifies the proof. But notice that  we can get the same-order convergence rate  with any $\bz^1\geq\0$ by a similar proof with slightly different $\epsilon_0, \epsilon_\bz, C_1$.  
\end{remark}

With Theorems \ref{thm:converge} and \ref{thm:bound_z_t}, we are ready to show the convergence rate results by the following lemma, whose proof is given in Appendix~\ref{app:3.2}.  
\begin{lemma}\label{lem:3.2} 
Let $\bar{\bx}\in X$ and $\bar{\bz}\geq 0$ be random vectors. If for any $\bx\in X$ and $\bz\geq \0$  that may depend on $(\bar{\bx},\bar{\bz})$, there are two constants  $\epsilon_1,\epsilon_0$ satisfying 
\begin{align}\label{eq:lem3.2}
\bbE \big[f_0(\bar{\bx}) -   f_0(\bx) - \big\langle\bar{\bz}, \bff(\bx)\big\rangle  +  \big\langle\bz, \bff(\bar{\bx})\big\rangle\big] \leq  
 \epsilon_1+\epsilon_0\bbE\big[\big\|\bz\big\|^2\big],
\end{align}
then for any $(\bx^*,\bz^*)$ satisfying KKT conditions,  we have
\begin{align}
 \bbE\big[\mid f_0(\bar{\bx}) - f_0(\bx^*)\mid\big]\leq 2 \epsilon_1 + 9\epsilon_0 \big\|\bz^*\big\|^2, \label{eq:3.2.1}\\
 \bbE\big[\sum_{i=1}^M [f_i(\bar{\bx})]_+ \big]\leq  \epsilon_1+\epsilon_0\big\|1+\bz^*\big\|^2,\label{eq:3.2.2}\\
 \bbE\big[d(\bz^*)-d(\bar{\bz})\big] \leq \frac{3}{2}( \epsilon_1 + 3\epsilon_0 \big\|\bz^*\big\|^2)\label{eq:3.2.3}.
\end{align}
\end{lemma}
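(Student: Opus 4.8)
The plan is to derive all three estimates from the single inequality \eqref{eq:lem3.2} by substituting carefully chosen (possibly data-dependent) pairs $(\bx,\bz)$, and then to convert the resulting expressions using the two KKT consequences in Lemma~\ref{lem:3.1} together with strong duality \eqref{eq:strong_dual}. The common opening move is to set $\bx=\bx^*$ in \eqref{eq:lem3.2}: since $\bff(\bx^*)\le\0$ and $\bar{\bz}\ge\0$ force $-\langle\bar{\bz},\bff(\bx^*)\rangle\ge0$, dropping that nonnegative term only shrinks the left side, leaving the reduced inequality
\begin{equation}\label{eq:redstar}
\bbE\big[f_0(\bar{\bx})-f_0(\bx^*)+\langle\bz,\bff(\bar{\bx})\rangle\big]\le\epsilon_1+\epsilon_0\,\bbE\big[\|\bz\|^2\big],
\end{equation}
valid for every admissible $\bz\ge\0$. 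I abbreviate $X_0:=f_0(\bar{\bx})-f_0(\bx^*)$.

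For the constraint-violation bound \eqref{eq:3.2.2} I would use \eqref{eq:2.41} to replace $X_0$ in \eqref{eq:redstar} by its lower bound $-\langle\bz^*,\bff(\bar{\bx})\rangle$, which turns \eqref{eq:redstar} into $\bbE[\langle\bz-\bz^*,\bff(\bar{\bx})\rangle]\le\epsilon_1+\epsilon_0\bbE[\|\bz\|^2]$. Taking the random multiplier with entries $z_i=z_i^*+\one_{\{f_i(\bar{\bx})>0\}}$ makes $\langle\bz-\bz^*,\bff(\bar{\bx})\rangle=\sum_{i}[f_i(\bar{\bx})]_+$ while $\|\bz\|^2\le\|\1+\bz^*\|^2$, giving \eqref{eq:3.2.2} immediately.

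For the objective bound \eqref{eq:3.2.1} the upper half $\bbE[X_0]\le\epsilon_1$ falls out of \eqref{eq:redstar} with $\bz=\0$. The lower half is the delicate step: from \eqref{eq:2.41} one gets $[X_0]_-\le\langle\bz^*,[\bff(\bar{\bx})]_+\rangle$, and to control the right-hand side I would feed the scaled data-dependent multiplier $z_i=3z_i^*\one_{\{f_i(\bar{\bx})>0\}}$ into \eqref{eq:redstar} (so $\|\bz\|^2\le9\|\bz^*\|^2$); invoking \eqref{eq:2.41} once more to absorb $X_0$ leaves $2\,\bbE[\langle\bz^*,[\bff(\bar{\bx})]_+\rangle]\le\epsilon_1+9\epsilon_0\|\bz^*\|^2$. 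Combining the two halves through the elementary identity $|X_0|=X_0+2[X_0]_-$ then yields exactly $\bbE[|X_0|]\le2\epsilon_1+9\epsilon_0\|\bz^*\|^2$; the factor $3$ in the multiplier is precisely what produces the constant $9$.

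For the dual-gap bound \eqref{eq:3.2.3} I would exploit strong duality $d(\bz^*)=f_0(\bx^*)$ and introduce a measurable minimizer $\hat{\bx}\in\text{argmin}_{\bx\in X}\hL(\bx,\bar{\bz})$, so $d(\bar{\bz})=f_0(\hat{\bx})+\langle\bar{\bz},\bff(\hat{\bx})\rangle$; such a selection exists since $X$ is compact and $\hL(\cdot,\bar{\bz})$ is continuous. Substituting $\bx=\hat{\bx}$, $\bz=\bz^*$ into \eqref{eq:lem3.2}, the terms $-f_0(\hat{\bx})-\langle\bar{\bz},\bff(\hat{\bx})\rangle$ reassemble into $-d(\bar{\bz})$, and inserting $f_0(\bx^*)=d(\bz^*)$ rewrites the left side as $\bbE[X_0+\langle\bz^*,\bff(\bar{\bx})\rangle]+\bbE[d(\bz^*)-d(\bar{\bz})]$; the first expectation is nonnegative by \eqref{eq:2.41}, so $\bbE[d(\bz^*)-d(\bar{\bz})]\le\epsilon_1+\epsilon_0\|\bz^*\|^2$, which is even sharper than the stated \eqref{eq:3.2.3}. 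The main obstacle throughout is not any single computation but the two-sided passage to expectations: producing a genuine $\bbE[|\cdot|]$ bound on the objective requires separately pinning down the \emph{negative} part through a data-dependent dual multiplier with the correct scaling, and the dual-gap step requires legitimately using the random argmin $\hat{\bx}(\bar{\bz})$ as a feasible primal point inside \eqref{eq:lem3.2}, which is exactly what the hypothesis's clause allowing $(\bx,\bz)$ to depend on $(\bar{\bx},\bar{\bz})$ is there to license.
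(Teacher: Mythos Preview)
Your proposal is correct and, for \eqref{eq:3.2.1} and \eqref{eq:3.2.2}, is essentially identical to the paper's argument: the same reduction to \eqref{eq:redstar}, the same invocation of \eqref{eq:2.41}, and the same data-dependent multipliers (the paper takes $z_j=(1+z_j^*)\one_{\{f_j(\bar\bx)>0\}}$ rather than your $z_j=z_j^*+\one_{\{f_j(\bar\bx)>0\}}$, but both feed identically into the final bounds). The identity $|a|=a+2[a]_-$ and the scaling $3z_j^*$ appear verbatim in the paper.

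Your treatment of the dual gap \eqref{eq:3.2.3} is different and in fact sharper. The paper plugs $\bx=\hat\bx$ and $\bz=\0$ into \eqref{eq:lem3.2} to get $\bbE[f_0(\bar\bx)]\le\bbE[d(\bar\bz)]+\epsilon_1$, and then combines this with the already-established negative-part bound $-\bbE[X_0]\le\frac{\epsilon_1}{2}+\frac{9\epsilon_0}{2}\|\bz^*\|^2$ and strong duality, which is why the constants come out as $\frac{3}{2}(\epsilon_1+3\epsilon_0\|\bz^*\|^2)$. You instead plug $\bx=\hat\bx$ and $\bz=\bz^*$ directly into \eqref{eq:lem3.2} and kill the residual expectation $\bbE[X_0+\langle\bz^*,\bff(\bar\bx)\rangle]\ge0$ using \eqref{eq:2.41}, obtaining $\bbE[d(\bz^*)-d(\bar\bz)]\le\epsilon_1+\epsilon_0\|\bz^*\|^2$. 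This is strictly stronger than the stated \eqref{eq:3.2.3} and avoids recycling the objective bound, so your route is cleaner for this part; the paper's route has the minor advantage of reusing work already done.
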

 
Below, we specify the parameters and give convergence rate results for two different choices of $\{\alpha_k\}_{k=1}^K$. One is a constant sequence and the other a varying sequence. 
 \begin{corollary}[Convergence rate with constant step size]\label{cor:cons_step}
 Given any positive integer $K$, set $\alpha_j = \frac{\alpha }{\sqrt{K}}$ for $j\in [K]$ and $\rho_1 = \frac{\rho}{\sqrt{K}}$ with  $\rho <\frac{ (1\!-\!\beta_1)^4(1\!-\!\beta_2)^{1/2}\theta}{2\alpha\sqrt{n} P }$.
Let $\{(\bx^k,\bz^k)\}$ be the sequence generated from Algorithm \ref{alg:APriD}   with  $\bz^1=\0$, and $\bar{\bx}^K = \frac{ \sum_{j=1}^K (1-\beta_1^{K-j+1}) \bx^j}{\sum_{j=1}^K (1-\beta_1^{K-j+1}) }$ and $\bar{\bz}^K = \frac{ \sum_{j=1}^K (1-\beta_1^{K-j+1})  \bz^j}{\sum_{j=1}^K(1-\beta_1^{K-j+1})}$.  
Define 
\begin{align*}
C_1 =& \frac{2\rho }{ \alpha(1\!-\!\beta_1)^2}\!\left( \! n \theta B^2 \!+\! \frac{\sqrt{n}(\theta^2 + Q) \alpha^2}{(1\!-\!\beta_1)^2(1\!-\!\beta_2)^{1/2}  \theta} \!+\! \frac{ \rho \alpha F^2  }{ (1\!-\!\beta_1)} \! \right) \!+\! \big(2 \!+\!  \frac{2}{(1\!-\!\beta_1)^2} \big)\big\|\bz^*\big\|^2,\\
C_2 =& \frac{2\rho\alpha \sqrt{n} P }{(1\!-\!\beta_1)^4(1\!-\!\beta_2)^{1/2}\theta},
\\
\widehat{P} =& P\frac{C_1}{1-C_2}+Q,\\
\phi=& n(\theta+1) B^2 +  \frac{\sqrt{n}\alpha^2(\theta^2 +   \widehat{P})}{(1\!-\!\beta_1)^2(1\!-\!\beta_2)^{1/2}\theta}   +  \frac{ \alpha \rho F^2  }{(1\!-\!\beta_1)}  +  (F^2 +  \widehat{P})  \alpha^2.
\end{align*}
Then we  have
\begin{align}
 \bbE\big[\mid f_0(\bar{\bx}^K) - f_0(\bx^*)\mid\big]\leq\frac{1}{2(1-\beta_{1})  \alpha\sqrt{K} }  \Big(2\phi + \frac{9(\alpha+\rho)}{\rho} \big\|\bz^*\big\|^2\Big),\label{eq:cor3.1}\\
 \bbE\big[\sum_{i=1}^M [f_i(\bar{\bx}^K)]_+ \big]\leq \frac{1}{2(1-\beta_{1})  \alpha\sqrt{K} } \Big(\phi+\frac{\alpha+\rho}{\rho}\big\|1+\bz^*\big\|^2 \Big),\\
 \bbE\big[d(\bz^*)-d(\bar{\bz}^K)\big] \leq   \frac{3}{4(1-\beta_{1})  \alpha\sqrt{K} }  \Big(\phi +  \frac{3(\alpha+\rho)}{\rho}\big\|\bz^*\big\|^2\Big).\label{eq:cor3.3}
\end{align}
\end{corollary}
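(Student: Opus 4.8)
The plan is to specialize the two general results already established---Theorem~\ref{thm:converge} (the averaged gap bound $\epsilon_\bz+\epsilon_0\bbE[\|\bz\|^2]$) and Theorem~\ref{thm:bound_z_t} (the uniform dual bound)---to the constant step-size regime, and then feed the outcome into Lemma~\ref{lem:3.2}. No new conceptual device is needed: the entire task is to check that the hypotheses of the earlier results hold under the stated choice $\alpha_j=\alpha/\sqrt K$, $\rho_1=\rho/\sqrt K$, and to simplify the resulting constants into the quantities $C_1,C_2,\widehat{P},\phi$ named in the corollary.

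First I would verify the premises of Theorem~\ref{thm:bound_z_t}. Since $\alpha_j=\alpha/\sqrt K$ gives $\sum_{j=1}^K\alpha_j^2=\alpha^2$, we may take $\widehat\alpha=\alpha^2$; because the ratio $\rho_1/\alpha_1=\rho/\alpha$ is $K$-independent, the theorem's requirement $\rho_1<\frac{\alpha_1(1-\beta_1)^4(1-\beta_2)^{1/2}\theta}{2\sqrt n P\widehat\alpha}$ collapses exactly to the stated $\rho<\frac{(1-\beta_1)^4(1-\beta_2)^{1/2}\theta}{2\alpha\sqrt n P}$. Substituting $\widehat\alpha=\alpha^2$ and $\rho_1/\alpha_1=\rho/\alpha$ into the theorem's formulas reproduces the $K$-free constants $C_1,C_2$ written in the corollary, so the uniform bound $\bbE[\|\bz^t\|^2]\le C_1/(1-C_2)$ holds, and with it $\widehat{P}_\bz^t\le \widehat{P}:=P\,C_1/(1-C_2)+Q$.

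Next I would evaluate the two error constants at $t=K$. Using $\widehat{P}_\bz^t\le\widehat{P}$ I invoke~\eqref{eq:epsilon_1}; inserting $\sum_{k=1}^K\alpha_k=\alpha\sqrt K$ and $\sum_{j=1}^K\alpha_j^2=\alpha^2$ shows that the bracketed quantity in~\eqref{eq:epsilon_1} equals $\phi$ term-for-term, so $\epsilon_\bz\le\epsilon_1=\frac{\phi}{2(1-\beta_1)\alpha\sqrt K}$. The same substitution in the formula for $\epsilon_0$ from Theorem~\ref{thm:converge}, via $\alpha_1/\rho_1=\alpha/\rho$, gives $\epsilon_0=\frac{\alpha+\rho}{2(1-\beta_1)\alpha\sqrt K\,\rho}$. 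I would also confirm that the double-sum averaging weights telescope: $\sum_{k=j}^K\alpha_k\beta_1^{k-j}=\frac{\alpha}{\sqrt K}\cdot\frac{1-\beta_1^{K-j+1}}{1-\beta_1}$, so the common factor cancels in the ratio and $\bar{\bx}^K,\bar{\bz}^K$ reduce to the stated weighted averages with weights $(1-\beta_1^{K-j+1})$.

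Finally, Theorem~\ref{thm:converge} together with $\epsilon_\bz\le\epsilon_1$ supplies precisely hypothesis~\eqref{eq:lem3.2} of Lemma~\ref{lem:3.2} for $(\bar{\bx},\bar{\bz})=(\bar{\bx}^K,\bar{\bz}^K)$. Applying that lemma and inserting the explicit $\epsilon_1,\epsilon_0$ yields \eqref{eq:cor3.1}--\eqref{eq:cor3.3}; for instance $2\epsilon_1+9\epsilon_0\|\bz^*\|^2$ factors as $\frac{1}{2(1-\beta_1)\alpha\sqrt K}\big(2\phi+\frac{9(\alpha+\rho)}{\rho}\|\bz^*\|^2\big)$, and the other two follow identically. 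I expect the only genuine obstacle to be the bookkeeping in the middle step---confirming that the bracket in~\eqref{eq:epsilon_1} matches $\phi$ exactly and that the $\rho_1/\alpha_1$ and $\widehat\alpha$ cancellations are carried consistently through $C_1,C_2,\epsilon_0$, and $\phi$---rather than anything conceptual, since the $O(1/\sqrt K)$ rate emerges immediately once $\sum_k\alpha_k=\Theta(\sqrt K)$ while $\sum_k\alpha_k^2=\Theta(1)$.
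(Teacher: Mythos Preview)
Your proposal is correct and follows essentially the same route as the paper: verify the hypotheses of Theorem~\ref{thm:bound_z_t} with $\widehat\alpha=\alpha^2$ to obtain the stated $C_1,C_2,\widehat P$, plug $\sum_k\alpha_k=\alpha\sqrt K$ and $\sum_k\alpha_k^2=\alpha^2$ into Theorem~\ref{thm:converge} and~\eqref{eq:epsilon_1} to get $\epsilon_1=\phi/\big(2(1-\beta_1)\alpha\sqrt K\big)$ and $\epsilon_0=(\alpha/\rho+1)/\big(2(1-\beta_1)\alpha\sqrt K\big)$, and then invoke Lemma~\ref{lem:3.2}. Your explicit check that the averaging weights reduce to $1-\beta_1^{K-j+1}$ is a detail the paper leaves implicit, but otherwise the arguments coincide.
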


\begin{proof} From $\alpha_j = \frac{\alpha}{\sqrt{K}}$ for $j\in [K]$, we have 
$\sum_{j=1}^t\alpha_j^2\leq \sum_{j=1}^K\alpha_j^2 =  \alpha^2$ for all $t\in [K]$.
Hence, the conditions required by Theorem~\ref{thm:bound_z_t} hold, and thus 
$\bbE\big[\big\|\bz^{t+1}\big\|^2\big]\leq \frac{C_1}{1-C_2}$ for $ t \in [K]$ with  the given ${C_1}, C_2$, and  
$\widehat{P}$.

Note  $\sum_{k=1}^K\alpha_k = \alpha\sqrt{K}$ and $\epsilon_\bz\leq\epsilon_1$ in \eqref{eq:epsilon_1}.
By Theorem  \ref{thm:converge},  we have that for any $\bx\in X$ and $\bz\geq \0$,
\begin{align*}
\bbE\big[ f_0(\bar{\bx}^K)-f_0(\bx) -\big\langle\bar{\bz}^K, \bff(\bx)\big\rangle + \big\langle\bz, \bff(\bar{\bx}^K)\big\rangle\big] \leq  \epsilon_1+\epsilon_0\bbE\big[\big\|\bz\big\|^2\big],
\end{align*}
where 
\begin{align*}
\epsilon_1 &=  \frac{1}{2(1\!-\!\beta_1) \alpha\sqrt{K} } \bigg( \! n(\theta+1) B^2 \!+\! \frac{\sqrt{n}(\theta + \frac{ \widehat{P}}{\theta})\alpha^2}{(1\!-\!\beta_1)^2(1\!-\!\beta_2)^{1/2}}  \!+\!  \frac{ \alpha \rho F^2  }{(1\!-\!\beta_1)}  \!+\!  (F^2 + \widehat{P})  \alpha^2  \bigg),\\
 \epsilon_0 &=
 \frac{1}{2(1-\beta_{1}) \alpha\sqrt{K} } \Big(\frac{\alpha}{\rho} + 1   \!\Big).
\end{align*} 
Therefore, by Lemma~\ref{lem:3.2},  we complete the proof.
\end{proof} 

\begin{remark}\label{rm:effect-theta}
The convergence rate results in Corollary~\ref{cor:cons_step} indicate that the parameter $\theta$ can neither be too big or too small.  Let $\alpha$ be fixed and $\rho$ proportional to $\theta$ such that $\rho <\frac{ (1\!-\!\beta_1)^4(1\!-\!\beta_2)^{1/2}\theta}{2\alpha\sqrt{n} P }$. Then $C_2$ is a constant independent of $\theta$, and $C_1$ and $\phi$ are quadratically dependent on $\theta$. However, the second terms in the parenthesis of the right sides of  \eqref{eq:cor3.1}-\eqref{eq:cor3.3} are inversely proportional to $\theta$. Thus the right-hand sides of  \eqref{eq:cor3.1}-\eqref{eq:cor3.3} will approach to infinity if $\theta\to\infty$ or $\theta\to 0$. 
Similarly, $\theta$ can neither be too big or too small in Corollary~\ref{cor:vary_step}, Corollary~\ref{cor:constant_stepL} and Corollary~\ref{cor:vary_stepL}. But notice that if $\bu^k$ in \eqref{eq:u_hat} is uniformly bounded for all $k$, then a too big $\theta$ will not have effect of clipping. Numerically, we observe that the algorithm can still perform well even with a very small $\theta$, even if $\alpha$ and $\rho$ are both relatively large.
\end{remark}

\begin{remark}\label{rem:para-tune} 
Corollary~\ref{cor:cons_step} requires $\rho <\frac{ (1\!-\!\beta_1)^4(1\!-\!\beta_2)^{1/2}\theta}{2\alpha\sqrt{n} P }$, or equivalently $\alpha\rho\leq \frac{ (1\!-\!\beta_1)^4(1\!-\!\beta_2)^{1/2}\theta}{2\sqrt{n} P }$.
In some applications, $P$ is unknown, and even it can be estimated, strictly following the setting may give too conservative stepsizes. In the experiments, we do not follow the condition strictly, but instead we tune 
$\alpha$ and $\rho$. 
In subsection~\ref{example:param}, we test Algorithm \ref{alg:APriD} with different combinations of $(\alpha, \theta, \rho)$, and it turns out that the algorithm can perform well in a wide range of values.
\end{remark}

\begin{corollary}[Convergence rate with varying step size] \label{cor:vary_step}
For all $j \in [K]$, set $\alpha_j = \frac{\alpha}{\sqrt{j+1}\log(j+1)}$. In addition, choose $\rho_1 = \frac{\rho}{\sqrt{2}\log(2)}$ with  $\rho <\frac{ (1\!-\!\beta_1)^4(1\!-\!\beta_2)^{1/2}\theta}{5\alpha\sqrt{n} P }$.
Let $\{(\bx^k,\bz^k)\}$ be the sequence generated from Algorithm \ref{alg:APriD}  with $\bz^1=\0$, 
and $\bar{\bx}^K = \frac{ \sum_{j=1}^K\sum_{k=j}^K \alpha_k  \beta_1^{k-j}   \bx^j}{\sum_{j=1}^K \sum_{k=j}^K \alpha_k  \beta_1^{k-j}  }$, and $\bar{\bz}^K= \frac{  \sum_{j=1}^K\sum_{k=j}^K \alpha_k  \beta_1^{k-j}  \bz^j}{\sum_{j=1}^K \sum_{k=j}^K \alpha_k  \beta_1^{k-j}}$, 
Define
\begin{align*}
C_1 =& \frac{\rho }{ \alpha(1\!-\!\beta_1)^2} 
\! \left( \! 2n \theta B^2 \!+\! \frac{5\sqrt{n} \alpha^2(\theta^2 \!+\! Q) }{(1\!-\!\beta_1)^2(1\!-\!\beta_2)^{1/2}  \theta} \!+\! \frac{ 5 \alpha\rho  F^2 }{ (1\!-\!\beta_1)} \!\right) \!+\! \big(2 \!+\! \frac{2}{(1\!-\!\beta_1)^2} \big)\big\|\bz^*\big\|^2,\\
C_2 =& \frac{5\alpha \rho\sqrt{n} P }{(1\!-\!\beta_1)^4(1-\beta_2)^{1/2}\theta},\\
\widehat{P} =& P\frac{C_1}{1-C_2}+Q,\\
\phi =& n(\theta+1) B^2 +  \frac{2.5\alpha^2\sqrt{n}(\theta^2 + \widehat{P})}{(1\!-\!\beta_1)^2(1\!-\!\beta_2)^{1/2}\theta}  +  \frac{ 2.5\alpha \rho F^2  }{ (1\!-\!\beta_1)}  +  2.5\alpha^2(F^2+ \widehat{P}).
\end{align*}
Then we  have
\begin{align*}
 \bbE\big[\mid f_0(\bar\bx^K) - f_0(\bx^*)\mid \big]\leq 
 \frac{\log(1+K)}{4(1-\beta_{1}) \alpha (\sqrt{K+2}-\sqrt{2})}  \Big(2\phi + \frac{9(\alpha+\rho)}{\rho} \big\|\bz^*\big\|^2\Big),
 \\
 \bbE\big[\sum_{i=1}^M [f_i(\bar{\bx}^K)]_+ \big]\leq  \frac{\log(1+K)}{4(1-\beta_{1}) \alpha (\sqrt{K+2}-\sqrt{2})}  \Big(\phi+\frac{\alpha+\rho}{\rho}\big\|1+\bz^*\big\|^2 \Big),\\
 \bbE\big[d(\bz^*)-d(\bar{\bz}^K)\big] \leq \frac{3\log(1+K)}{8(1-\beta_{1}) \alpha (\sqrt{K+2}-\sqrt{2})}  \Big(\phi +  \frac{3(\alpha+\rho)}{\rho}\big\|\bz^*\big\|^2\Big).
\end{align*}
\end{corollary}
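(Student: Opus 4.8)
The plan is to follow verbatim the three-step argument used for Corollary~\ref{cor:cons_step}, replacing the two elementary identities $\sum_{j=1}^K\alpha_j^2=\alpha^2$ and $\sum_{k=1}^K\alpha_k=\alpha\sqrt{K}$ (valid for the constant step size) by integral-comparison estimates tailored to $\alpha_j=\frac{\alpha}{\sqrt{j+1}\log(j+1)}$. Concretely, I would first establish a uniform upper bound $\widehat\alpha:=\sum_{j=1}^K\alpha_j^2\le 2.5\,\alpha^2$ and a lower bound on $\sum_{k=1}^K\alpha_k$, and then feed these into Theorem~\ref{thm:bound_z_t}, Theorem~\ref{thm:converge} and Lemma~\ref{lem:3.2} exactly as in the constant-step case.

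For the upper bound, since $\alpha_j^2=\frac{\alpha^2}{(j+1)\log^2(j+1)}$ and $x\mapsto\frac{1}{(x+1)\log^2(x+1)}$ is decreasing on $[1,\infty)$, I would split off the first term and compare the tail with an integral: the substitution $u=\log(x+1)$ gives $\int_1^{\infty}\frac{dx}{(x+1)\log^2(x+1)}=\frac{1}{\log 2}$, so that $\sum_{j=1}^K\alpha_j^2\le\alpha^2\big(\frac{1}{2\log^2 2}+\frac{1}{\log 2}\big)\le 2.5\,\alpha^2$. With $\alpha_1=\frac{\alpha}{\sqrt2\log2}$ and $\rho_1=\frac{\rho}{\sqrt2\log2}$ we have $\rho_1/\alpha_1=\rho/\alpha$, so the hypothesis $\rho_1<\frac{\alpha_1(1-\beta_1)^4(1-\beta_2)^{1/2}\theta}{2\sqrt{n}P\widehat\alpha}$ of Theorem~\ref{thm:bound_z_t} reduces, upon inserting $\widehat\alpha=2.5\alpha^2$, exactly to the assumed $\rho<\frac{(1-\beta_1)^4(1-\beta_2)^{1/2}\theta}{5\alpha\sqrt{n}P}$. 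Hence Theorem~\ref{thm:bound_z_t} applies and yields $\bbE[\|\bz^t\|^2]\le\frac{C_1}{1-C_2}$ for all $t\in[K+1]$, where the displayed $C_1,C_2$ are precisely the generic constants of Theorem~\ref{thm:bound_z_t} after the substitutions $\rho_1/\alpha_1=\rho/\alpha$ and $\widehat\alpha=2.5\alpha^2$; consequently $\widehat P_\bz^k\le\widehat P$ with $\widehat P$ as stated.

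For the lower bound I would again use monotonicity: $\frac{1}{\sqrt{x+1}\log(x+1)}$ is decreasing, so $\sum_{k=1}^K\alpha_k\ge\alpha\int_1^{K+1}\frac{dx}{\sqrt{x+1}\log(x+1)}$, and pulling the logarithm out at its largest value on the interval while integrating $\frac{1}{\sqrt{x+1}}$ gives a lower bound of order $\frac{\sqrt{K}}{\log K}$, matching the displayed $\sum_{k=1}^K\alpha_k\ge\frac{2\alpha(\sqrt{K+2}-\sqrt2)}{\log(1+K)}$. Plugging this together with $\widehat\alpha=2.5\alpha^2$ and $\alpha_1/\rho_1=\alpha/\rho$ into the bound $\epsilon_\bz\le\epsilon_1$ from \eqref{eq:epsilon_1} and into $\epsilon_0$ of Theorem~\ref{thm:converge} turns the bracketed expression of $\epsilon_1$ into exactly $\phi$ and produces $\epsilon_1\le\frac{\phi\log(1+K)}{4(1-\beta_1)\alpha(\sqrt{K+2}-\sqrt2)}$ and $\epsilon_0\le\frac{(\alpha+\rho)\log(1+K)}{4\rho(1-\beta_1)\alpha(\sqrt{K+2}-\sqrt2)}$.

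Finally, I would apply Lemma~\ref{lem:3.2} with these $\epsilon_1,\epsilon_0$: inequalities \eqref{eq:3.2.1}, \eqref{eq:3.2.2} and \eqref{eq:3.2.3} give the objective-error, constraint-violation and dual-gap bounds respectively, and collecting the common prefactor $\frac{\log(1+K)}{4(1-\beta_1)\alpha(\sqrt{K+2}-\sqrt2)}$ reproduces the three displayed estimates. I expect the main obstacle to be the two integral-comparison estimates, and in particular the lower bound on $\sum_{k=1}^K\alpha_k$, where the logarithmic factor must be extracted cleanly so that the resulting constants align with the stated $\phi$, $C_1$ and $C_2$; everything after these estimates is mechanical substitution mirroring the proof of Corollary~\ref{cor:cons_step}.
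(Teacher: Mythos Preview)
Your proposal is correct and follows essentially the same approach as the paper: bound $\sum_{j=1}^K\alpha_j^2\le 2.5\alpha^2$ by the integral comparison you describe, verify the hypothesis of Theorem~\ref{thm:bound_z_t} via $\rho_1/\alpha_1=\rho/\alpha$, lower-bound $\sum_{k=1}^K\alpha_k$ by extracting the logarithmic factor, and then apply Theorem~\ref{thm:converge} and Lemma~\ref{lem:3.2}. The only cosmetic difference is that the paper pulls $\log(k+1)\le\log(K+1)$ out of the \emph{sum} first and then compares $\sum_{k=1}^K\frac{1}{\sqrt{k+1}}$ with $\int_1^{K+1}\frac{dx}{\sqrt{x+1}}$, which gives exactly the factor $\log(1+K)$ rather than the $\log(K+2)$ your stated order would yield.
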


\begin{proof}
Note that for any $t\in[K]$, 
 \begin{align*}
 &\sum_{j=1}^t \alpha_j^2 \leq \sum_{j=1}^\infty \alpha_j^2  \leq  \alpha^2 \sum_{j=1}^\infty \frac{1}{(j+1)(\log(j+1))^2}\\
\leq&  \alpha^2\Big( \frac{1}{2(\log2)^2} + \int_{1}^\infty  \frac{1}{(x+1)(\log(x+1))^2}dx\Big)\\
=& \alpha^2 \Big(\frac{1}{2(\log2)^2}+\frac{1}{\log2}\Big) \leq 2.5 \alpha^2.
 \end{align*}
Hence, by the choice of $\rho_1$, the conditions required by Theorem~\ref{thm:bound_z_t} hold, and thus  $\bbE\big[\big\|\bz^{t+1}\big\|^2\big] \leq \frac{C_1}{1-C_2}$ for all $t\in[K]$ with the given ${C_1}, C_2$, and $\widehat{P}$. Note  $\epsilon_\bz\leq\epsilon_\1$ in \eqref{eq:epsilon_1} and $\sum_{k=1}^K\alpha_k$ satisfies
\[
\sum_{k=1}^K \alpha_k  = \sum_{k=1}^K   \frac{\alpha}{\sqrt{k+1}\log(k+1)} \geq \frac{\alpha}{\log(1+K)} \int_{k=1}^{K+1}   \frac{1}{\sqrt{k+1} } = \frac{2\alpha(\sqrt{K+2}-\sqrt{2})}{\log(1+K)}.
\]
By Theorem  \ref{thm:converge},  we have  for any $\bx\in X$, $\bz\geq \0$,  
\begin{align*}
\bbE\big[ f_0(\bar{\bx}^K)-f_0(\bx) -\big\langle\bar{\bz}^K, \bff(\bx)\big\rangle + \big\langle\bz, \bff(\bar{\bx}^K)\big\rangle\big] \leq  \epsilon_1+\epsilon_0\bbE\big[\big\|\bz\big\|^2\big],
\end{align*}
where 
\begin{align*}
\epsilon_1 &=  \frac{\log(1+K)}{4(1-\beta_{1}) \alpha (\sqrt{K+2}-\sqrt{2})} \bigg(n(\theta+1) B^2 +  \frac{2.5\alpha^2\sqrt{n}(\theta + \frac{\widehat{P}}{\theta})}{(1\!-\!\beta_1)^2(1\!-\!\beta_2)^{1/2}} \\
& \hspace{2cm}+  \frac{ 2.5\alpha \rho F^2  }{ (1\!-\!\beta_1)} +  2.5\alpha^2(F^2 + \widehat{P})  \bigg),\\
\epsilon_0 &=
 \frac{\log(1+K)}{4(1-\beta_{1}) \alpha (\sqrt{K+2}-\sqrt{2})} \Big(\frac{\alpha}{\rho} + 1   \Big).
\end{align*} 
By Lemma~\ref{lem:3.2}, we obtain the desired results and complete the proof.
\end{proof} 
 
\section{Extension  to Stochastic  Minimax Problem}
In this section, we modify the APriD method in Algorithm~\ref{alg:APriD} to solve a stochastic convex-concave minimax problem in the form of
\begin{equation}
\min_{\bx\in X} \, \max_{\bz\in Z}\, \{\hL(\bx,\bz) = \bbE_{\xi}[L(\bx,\bz;\xi)]\}. \label{eq:problemL}
\end{equation}
Similar to existing works (e.g., \cite{nemirovski2009robust, juditsky2011solving, chen2014optimal, zhao2019optimal}), we assume that $X \subseteq \bbR^n $  and $Z  \subseteq \bbR^m $ in \eqref{eq:problemL} are compact convex sets, $\xi$ is a random vector, and $\hL(\bx,\bz)$ is convex in $\bx\in X$ and concave in $\bz\in Z$. 
By the minimax theorem \cite{neumann1928theorie}, the strong duality holds for \eqref{eq:problemL}, namely, 
\begin{equation}\label{eq:str-dual}
\min_{\bx\in  X} \max_{\bz\in Z}\, \hL(\bx,\bz)=  \max_{\bz\in Z}\,\min_{\bx\in X}\, \hL(\bx,\bz).
\end{equation}

In Algorithm~\ref{alg:APriD}, we perform an adaptive SGM update  to  the primal variable $\bx$ in \eqref{eq:m_update}-\eqref{eq:x_update}, but  a   vanilla SGM update to the dual variable $\bz$. This is because the Lagrangian function of \eqref{eq:problem} has a simple linear dependence on the dual variable. As demonstrated in \cite{xu2020primal}, an adaptive primal-dual SGM performs similarly as well as its non-adaptive counterpart for a linearly constrained problem.  
In \eqref{eq:problemL}, $\hL$ can have complex dependence on  both $\bx$ and $\bz$. Hence, we modify Algorithm~\ref{alg:APriD} to solve \eqref{eq:problemL} by performing adaptive updates to both $\bx$ and  $\bz$. The modified algorithm is named as APriAD, and its pseudocode is given in Algorithm~\ref{alg:APriAD}. Similar to Algorithm~\ref{alg:APriD}, we assume a stochastic first-order oracle, which can return an unbiased stochastic subgradient of $\hL$ at any inquiry point $(\bx,\bz)$.

\begin{algorithm}[htbp]
\caption{Adaptive primal-dual stochastic gradient (APriAD) method for \eqref{eq:problemL}}
\label{alg:APriAD}
    \begin{algorithmic}[1]
    \State \textbf{Initialization:} choose  $\bx^1\in X, \bz^1 \in Z, \bm^0 = [\bm^0_\bx, \bm^0_\bz]  = \0, \bv^0 = [\bv^0_\bx, \bv^0_\bz] = \0, \widehat{\bv}^0 = [\widehat{\bv}^0_\bx, \widehat{\bv}^0_\bz] = \0$;
    \State \textbf{Parameter setting:}  set the maximum number $K$ of iterations;  choose $\beta_1, \beta_2 \in (0,1)$, $\theta>0$, non-increasing step sizes $\{\alpha_k\}_{k= 1}^K$ and $\{\rho_k\}_{k= 1}^K$;
      \ForAll {$k = 1,2,...,K$}
      \State Call the oracle to return a stochastic subgradient $\bg^k = (\bu^k,\bw^k)$ of $\hL$  at  $(\bx^k,\bz^k)$.
       \State Update the primal variables $\bx$ and dual variables $\bz$ by 
              \begin{align} 
               \bm^k   &= \beta_{1}\bm^{k-1}  + (1-\beta_{1})\bg^k,  \label{eq:Lm_update} \\
              \widehat{\bg}^k& = ( \widehat{\bu}^k, \widehat{\bw}^k)=\bigg(\frac{\bu^k }{\max\{1, \frac{\|\bu^k\|}{\theta}\}}, \frac{\bw^k}{\max\{1, \frac{\|\bw^k\|}{\theta}\}}\bigg),\\
               \bv^k &= \beta_2 \bv^{k-1}  + (1-\beta_2)(\widehat{\bg}^k)^2,   \label{eq:Lv_update1}\\ 
               \widehat{\bv}^k  & = \max\{\widehat{\bv}^{k-1}, \bv^k\},\label{eq:Lv_update2} \\ 
              \bx^{k+1}  &= \proj_{X,(\widehat{\bv}^k_\bx)^{1/2}}(\bx^k  - \alpha_k \bm^k_\bx/(\widehat{\bv}^k_\bx)^{1/2}),\label{eq:Lx_update} \\
              \bz^{k+1} &= \proj_{Z,(\widehat{\bv}^k_\bz)^{1/2}}(\bz^k + \rho_k \bm^k_\bz/(\widehat{\bv}^k_\bz)^{1/2}). \label{eq:Lz_update} 
              \end{align}  
              \EndFor  
    \end{algorithmic}
\end{algorithm}   

Below we analyze the convergence of Algorithm~\ref{alg:APriAD}.
Throughout our analysis in this section, we  make the following two assumptions.
\begin{assumption}\label{assu:compactL}
$X$ and $Z$ are both compact convex sets, i.e., there exist  constants $B_\bx,B_\bz$ such that 
\[
\|\bx_1-\bx_2\|_\infty \leq B_\bx,\forall\,\bx_1,\bx_2\in X;  \quad  \|\bz_1-\bz_2\|_\infty \leq B_\bz, \forall\, \bz_1,\bz_2\in Z. 
\]
\end{assumption} 

\begin{assumption}\label{assu:boundL}
The stochastic subgradient $(\bu^k,\bw^k)$ of $\hL(\bx^k,\bz^k)$ is unbiased and bounded for all $k\in[K]$, i.e., there are constants $M_\bx$ and $M_\bz$ such that for any $k\in[K]$,
\[\bbE\big[\bu^k\mid \hH^k\big] =  \tilde{\nabla}_\bx \hL (\bx^k,\bz^k) \in \partial_\bx \hL(\bx^k,\bz^k),  \quad  \bbE\big[\|\bu^k\|^2\big] \leq M_\bx^2,  \]
\[ \bbE\big[\bw^k\mid \hH^k\big] = \tilde{\nabla}_\bz \hL (\bx^k,\bz^k)\in \partial_\bz \hL(\bx^k,\bz^k),  \quad  \bbE\big[\|\bw^k\|^2\big] \leq M_\bz^2.\]
\end{assumption}

Different from Assumption~\ref{assu:bound}, here we assume uniform bounds on $\bbE\big[\|\bu^k\|^2\big]$ and $ \bbE\big[\|\bw^k\|^2\big]$ as $X$ and $Z$ are both compact.
By the assumption, we have that for any $k\in[K]$,
\begin{equation*}
\max_{j\in [k]}  \bbE\big[\|\bu^j\|^2\big] \leq M_\bx^2, \quad \max_{j\in [k]}  \bbE\big[\|\bw^j\|^2\big] \leq M_\bz^2.
\end{equation*}
Similarly  to Lemma~\ref{lem:bound_v}, we have the bounds  
 \begin{align*} 
   &\bbE\big[ \big\|(\widehat{\bv}^k_\bx)^{1/2}\big\|_1\big] \leq n\theta, \quad  \bbE\big[ \big\|\bm^{k}_\bx\big\|_{(\widehat{\bv}^k_\bx)^{-{1/2}}}^2\big] \leq   \frac{\sqrt{n}(\theta^2 +  M_\bx^2)}{(1-\beta_2)^{1/2}\theta}, \\
    &\bbE\big[ \big\|(\widehat{\bv}^k_\bz)^{1/2}\big\|_1\big] \leq m\theta, \quad  \bbE \big[\big\|\bm^{k}_\bz\big\|_{(\widehat{\bv}^k_\bz)^{-{1/2}}}^2\big] \leq \frac{\sqrt{m}(\theta^2 + M_\bz^2)}{(1-\beta_2)^{1/2}\theta}.
\end{align*}  

By the convexity-concavity of $\hL(\bx,\bz)$, we can upper bound $ \hL (\bx^j,\bz) -\hL (\bx,\bz^j)$ as follows.
\begin{lemma} \label{lem:L_L}
For any $j \in[K]$, $\bx\in X, \bz\in Z$, we have
\begin{align} 
   \hL (\bx^j,\bz) -\hL (\bx,\bz^j)   
\leq & \big\langle\bx^{j} \!-\! \bx, \bu^{j} \big\rangle - \big\langle\bz^{j} \!-\! \bz, \bw^{j} \big\rangle + \big\langle\bz^{j} \!-\! \bz, \bw^{j}-\tilde{\nabla}_\bz\hL (\bx^j,\bz^j) \big\rangle \nonumber \\
& -  \big\langle\bx^{j} \!-\! \bx, \bu^{j}-\tilde{\nabla}_\bx \hL (\bx^j,\bz^j) \big\rangle,\label{eq:L_L}
\end{align}
where $\tilde{\nabla}_\bx \hL (\bx^j,\bz^j) = \bbE\big[\bu^j\mid\hH^j\big] \in \partial_\bx \hL(\bx^j,\bz^j)$ and $ \tilde{\nabla}_\bz \hL (\bx^j,\bz^j) = \bbE\big[\bw^j\mid\hH^j\big] \in \partial_\bz \hL(\bx^j,\bz^j)$.
\end{lemma}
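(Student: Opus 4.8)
The plan is to derive \eqref{eq:L_L} purely from the convexity of $\hL(\cdot,\bz)$ in its first argument and the concavity of $\hL(\bx,\cdot)$ in its second argument, and then to substitute the oracle outputs $\bu^j,\bw^j$ for the true (sub)gradients while carrying along the corresponding residual terms. First I would split the left side into a pure-$\bz$ difference and a pure-$\bx$ difference via the telescoping identity $\hL(\bx^j,\bz)-\hL(\bx,\bz^j)=\big[\hL(\bx^j,\bz)-\hL(\bx^j,\bz^j)\big]+\big[\hL(\bx^j,\bz^j)-\hL(\bx,\bz^j)\big]$, so that each bracket involves only one of the two variables.

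For the $\bx$-bracket, I apply the subgradient inequality to the convex function $\hL(\cdot,\bz^j)$ at $\bx^j$: since $\tilde{\nabla}_\bx \hL(\bx^j,\bz^j)\in\partial_\bx\hL(\bx^j,\bz^j)$, we have $\hL(\bx,\bz^j)\ge \hL(\bx^j,\bz^j)+\langle \tilde{\nabla}_\bx \hL(\bx^j,\bz^j),\,\bx-\bx^j\rangle$, hence $\hL(\bx^j,\bz^j)-\hL(\bx,\bz^j)\le \langle \bx^j-\bx,\,\tilde{\nabla}_\bx \hL(\bx^j,\bz^j)\rangle$. For the $\bz$-bracket, I apply the supergradient inequality to the concave function $\hL(\bx^j,\cdot)$ at $\bz^j$: since $\tilde{\nabla}_\bz \hL(\bx^j,\bz^j)\in\partial_\bz\hL(\bx^j,\bz^j)$, we have $\hL(\bx^j,\bz)\le \hL(\bx^j,\bz^j)+\langle \tilde{\nabla}_\bz \hL(\bx^j,\bz^j),\,\bz-\bz^j\rangle$, hence $\hL(\bx^j,\bz)-\hL(\bx^j,\bz^j)\le -\langle \bz^j-\bz,\,\tilde{\nabla}_\bz \hL(\bx^j,\bz^j)\rangle$. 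Adding the two brackets gives the intermediate bound $\hL(\bx^j,\bz)-\hL(\bx,\bz^j)\le \langle \bx^j-\bx,\,\tilde{\nabla}_\bx \hL(\bx^j,\bz^j)\rangle-\langle \bz^j-\bz,\,\tilde{\nabla}_\bz \hL(\bx^j,\bz^j)\rangle$, which is exactly \eqref{eq:L_L} written in terms of the true (sub)gradients.

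The last step replaces the true (sub)gradients by the oracle outputs and isolates the residuals: inside each inner product I write $\tilde{\nabla}_\bx \hL(\bx^j,\bz^j)=\bu^j-\big(\bu^j-\tilde{\nabla}_\bx \hL(\bx^j,\bz^j)\big)$ and $\tilde{\nabla}_\bz \hL(\bx^j,\bz^j)=\bw^j-\big(\bw^j-\tilde{\nabla}_\bz \hL(\bx^j,\bz^j)\big)$, then expand by linearity of $\langle\cdot,\cdot\rangle$ and collect terms to reproduce the four-term right-hand side of \eqref{eq:L_L}. There is no genuine obstacle here, as the lemma is a direct consequence of convexity-concavity; the only points that demand care are the sign in the concave direction (the supergradient inequality points the opposite way, which is what produces the leading $-\langle \bz^j-\bz,\,\bw^j\rangle$ term) and the clean split of each inner product into an oracle part plus a residual, since the residuals are precisely the terms that vanish under $\bbE[\cdot\mid\hH^j]$ and make the bound useful in the subsequent expectation-based analysis.
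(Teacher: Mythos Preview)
Your proposal is correct and follows essentially the same approach as the paper: both derive the bound $\hL(\bx^j,\bz)-\hL(\bx,\bz^j)\le \langle \bx^j-\bx,\tilde{\nabla}_\bx\hL(\bx^j,\bz^j)\rangle-\langle \bz^j-\bz,\tilde{\nabla}_\bz\hL(\bx^j,\bz^j)\rangle$ from convexity in $\bx$ and concavity in $\bz$, then insert the decomposition $\bu^j=\tilde{\nabla}_\bx\hL+(\bu^j-\tilde{\nabla}_\bx\hL)$ (and similarly for $\bw^j$) to obtain \eqref{eq:L_L}. The only cosmetic difference is ordering: the paper writes the oracle-plus-residual identities \eqref{eq:12_1}--\eqref{eq:12_2} first and substitutes at the end, whereas you bound first and substitute afterward.
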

\begin{proof}
For any $j\in[K]$, we have 
\begin{align}
\big\langle\bx^{j} - \bx, \bu^{j} \big\rangle &=  \big\langle\bx^{j} - \bx, \tilde{\nabla}_\bx \hL (\bx^j,\bz^j) \big\rangle+\big\langle\bx^{j} - \bx, \bu^{j}-\tilde{\nabla}_\bx \hL (\bx^j,\bz^j) \big\rangle, \label{eq:12_1}\\
\big\langle\bz^{j} - \bz, \bw^{j} \big\rangle &=  \big\langle\bz^{j} - \bz, \tilde{\nabla}_\bz \hL (\bx^j,\bz^j) \big\rangle+\big\langle\bz^{j} - \bz, \bw^{j}-\tilde{\nabla}_\bz \hL (\bx^j,\bz^j) \big\rangle.\label{eq:12_2}
\end{align}
Because $\hL (\bx,\bz)$ is convex in $\bx$ and concave in $\bz$, we have
\begin{align*}
\big\langle \bx^j-\bx, \tilde{\nabla}_\bx \hL (\bx^j,\bz^j) \big\rangle &\geq \hL (\bx^j,\bz^j) -\hL (\bx,\bz^j),\\
\big\langle \bz^j-\bz, \tilde{\nabla}_\bz \hL (\bx^j,\bz^j) \big\rangle &\leq \hL (\bx^j,\bz^j)-\hL (\bx^j,\bz).
\end{align*}
Negating the second one  of the above  two inequalities and adding to the first one give  
\begin{align*}
& \big\langle \bx^j-\bx, \tilde{\nabla}_\bx \hL (\bx^j,\bz^j) \big\rangle-\big\langle \bz^j-\bz, \tilde{\nabla}_\bz \hL (\bx^j,\bz^j) \big\rangle \geq\hL (\bx^j,\bz) -\hL (\bx,\bz^j).
\end{align*}
Replacing the left  two terms of the  above inequality  by  \eqref{eq:12_1} and \eqref{eq:12_2}, we obtain the desired result by rearranging terms.  
\end{proof} 

For the first two terms in the right side of \eqref{eq:L_L}, we have the following lemma by the same arguments as those in the proof of  Lemma~\ref{lem:primal_x}.
\begin{lemma}   \label{lem:sum_uLbbE}
For any $t\in[K], \bx\in X, \bz\in Z$,
\begin{align*} 
  (1-\beta_{1}) \sum_{j=1}^t \bbE\big[\big\langle\bx^{j} - \bx, \bu^{j} \big\rangle\big] \sum_{k=j}^t \alpha_k  \beta_1^{k-j}
&\leq \frac{n \theta B^2_\bx}{2} +  
 \frac{\sqrt{n}(\theta^2 +   M_\bx^2) \sum_{j=1}^t  \alpha_j^2}{2(1\!-\!\beta_1)^2(1\!-\!\beta_2)^{1/2}\theta},\\
(1-\beta_{1}) \sum_{j=1}^t \bbE\big[\big\langle\bz^{j} - \bz, \bw^{j} \big\rangle\big] \sum_{k=j}^t \rho_k  \beta_1^{k-j}
&\geq -\frac{m \theta  B^2_\bz}{2}    - 
 \frac{\sqrt{m}(\theta^2 +   M_\bz^2) \sum_{j=1}^t  \rho_j^2}{2(1\!-\!\beta_1)^2(1\!-\!\beta_2)^{1/2}\theta}.   
\end{align*}
\end{lemma}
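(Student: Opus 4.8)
The plan is to transcribe the proof of Lemma~\ref{lem:primal_x} twice, once for each variable block, exploiting the fact that the $\bx$-update \eqref{eq:Lx_update} and the $\bz$-update \eqref{eq:Lz_update} of Algorithm~\ref{alg:APriAD} have exactly the same momentum-plus-weighted-projection structure as \eqref{eq:x_update}; the only structural difference is that the $\bx$-step is a descent while the $\bz$-step is an ascent. The two displayed inequalities will then follow from the two sign choices, and the uniform gradient bounds of Assumption~\ref{assu:boundL} will simply replace the data-dependent quantity $\widehat{P}_\bz^t$ that appears in Lemma~\ref{lem:primal_x}.

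For the $\bx$-block I would start from the nonexpansiveness of the weighted projection in \eqref{eq:Lx_update}. Expanding the inequality $\|\bx^{k+1}-\bx\|^2_{(\widehat{\bv}^k_\bx)^{1/2}}\le\|\bx^k-\alpha_k\bm^k_\bx/(\widehat{\bv}^k_\bx)^{1/2}-\bx\|^2_{(\widehat{\bv}^k_\bx)^{1/2}}$ in the weighted inner product gives, for every $\bx\in X$, the three-point inequality
\begin{align*}
2\alpha_k\big\langle\bm^k_\bx,\bx^k-\bx\big\rangle\le\|\bx^k-\bx\|^2_{(\widehat{\bv}^k_\bx)^{1/2}}-\|\bx^{k+1}-\bx\|^2_{(\widehat{\bv}^k_\bx)^{1/2}}+\alpha_k^2\|\bm^k_\bx\|^2_{(\widehat{\bv}^k_\bx)^{-1/2}}.
\end{align*}
I would then unroll the momentum recursion \eqref{eq:Lm_update} (with $\bm^0=\0$), which writes $\bm^k_\bx=(1-\beta_1)\sum_{i=1}^k\beta_1^{k-i}\bu^i$, and sum the three-point inequality over $k$, rearranging so that the total weight attached to $\langle\bx^j-\bx,\bu^j\rangle$ becomes exactly $(1-\beta_1)\sum_{k=j}^t\alpha_k\beta_1^{k-j}$. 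This bookkeeping is precisely the step carried out in Lemma~\ref{lem:primal_x} and is what produces the characteristic double sum on the left-hand side.

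It then remains to bound the two right-hand contributions. The telescoping distance terms collapse and are controlled by the diameter estimate $\|\bx_1-\bx_2\|^2\le nB_\bx^2$ (the analogue of \eqref{eq:nB2} under Assumption~\ref{assu:compactL}) together with the monotonicity of $\widehat{\bv}^k_\bx$ in $k$ and the bound $\bbE[\|(\widehat{\bv}^k_\bx)^{1/2}\|_1]\le n\theta$ from the minimax analogue of Lemma~\ref{lem:bound_v}, yielding the term $\frac{n\theta B_\bx^2}{2}$. The residual terms $\sum_k\alpha_k^2\|\bm^k_\bx\|^2_{(\widehat{\bv}^k_\bx)^{-1/2}}$ are controlled in expectation by $\bbE[\|\bm^k_\bx\|^2_{(\widehat{\bv}^k_\bx)^{-1/2}}]\le\frac{\sqrt{n}(\theta^2+M_\bx^2)}{(1-\beta_2)^{1/2}\theta}$, again from that analogue, giving the second term and hence the first displayed inequality after taking expectations. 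The only change from Lemma~\ref{lem:primal_x} is that $\widehat{P}_\bz^t$ is replaced by the constant $M_\bx^2$, since Assumption~\ref{assu:boundL} furnishes a uniform bound on $\bbE[\|\bu^k\|^2]$. For the $\bz$-block I would repeat the identical argument for \eqref{eq:Lz_update}, substituting $(\alpha_k,\bx,\bu,n,B_\bx,M_\bx)$ by $(\rho_k,\bz,\bw,m,B_\bz,M_\bz)$; because \eqref{eq:Lz_update} is an ascent step, the projection inequality flips sign, so the same estimates produce a \emph{lower} bound, which is the second displayed inequality with the $-\frac{m\theta B_\bz^2}{2}$ and the negative residual term.

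The main obstacle is the momentum bookkeeping in the second step: rearranging the summed inner products $\langle\bm^k_\bx,\bx^k-\bx\rangle$ into the matched-index double sum weighted by $\sum_{k=j}^t\alpha_k\beta_1^{k-j}$ while keeping the distance terms telescoping, given that the preconditioner $\widehat{\bv}^k_\bx$ depends on $k$ (it is nondecreasing, which is exactly what makes the telescoping and the $\ell_1$ estimate work). Everything else is a direct transcription of Lemma~\ref{lem:primal_x}, with the uniform bounds $M_\bx^2,M_\bz^2$ in place of $\widehat{P}_\bz^t$ and with the sign reversal for the concave $\bz$-block.
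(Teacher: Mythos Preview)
Your proposal is correct and matches the paper's approach: the paper itself proves Lemma~\ref{lem:sum_uLbbE} simply by stating that it follows ``by the same arguments as those in the proof of Lemma~\ref{lem:primal_x}'', and you correctly identify all the necessary substitutions ($B\to B_\bx,B_\bz$; $n\to n,m$; $\widehat P_\bz^t\to M_\bx^2,M_\bz^2$; the sign flip for the ascent block).

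One small technical caveat about your sketch: starting from the \emph{nonexpansiveness} inequality yields $\langle\bm^k_\bx,\bx^k-\bx\rangle$ on the left, and substituting the closed form $\bm^k_\bx=(1-\beta_1)\sum_i\beta_1^{k-i}\bu^i$ then gives $\langle\bu^i,\bx^k-\bx\rangle$ with \emph{mismatched} indices, not $\langle\bu^j,\bx^j-\bx\rangle$. The actual mechanics in Lemma~\ref{lem:primal_x} start instead from the projection \emph{optimality condition} \eqref{eq:x1}, which (i) produces $\langle\bx^{k+1}-\bx,\bm^k\rangle$ and (ii) retains the extra term $-\tfrac{1}{2}\|\bx^{k+1}-\bx^k\|^2_{(\widehat\bv^k)^{1/2}}$. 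The recursive expansion \eqref{eq:m_ite} then yields matched terms $\langle\bx^j-\bx,\bu^j\rangle$ together with cross terms $\langle\bx^{j+1}-\bx^j,\bm^j\rangle$, and it is precisely the extra $-\|\bx^{k+1}-\bx^k\|^2$ term that absorbs these cross terms via Young's inequality in \eqref{eq:primal_sum_left}, producing the $(1-\beta_1)^{-2}$ factor. Since you explicitly defer to Lemma~\ref{lem:primal_x} for this bookkeeping, your plan goes through; just be aware that the starting inequality and the unrolling step there differ slightly from what you wrote.
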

The second inequality above holds reversely  because of the concavity of $\hL$ about $\bz$.

For the last two terms in the right side of \eqref{eq:L_L},  by essentially the same arguments as those in the proof of Lemma~\ref{lem:cov}, we have the following lemma.
\begin{lemma}\label{lem:covL} For any deterministic or stochastic vector $(\bx,\bz)$ with $\bx\in X$ and $\bz\in Z$, it holds for any positive number sequence $\{\gamma_j\}_{j= 1}^K$ and $t\in[K]$ that 
\begin{align*}
&\sum_{j=1}^t \gamma_j\bbE\left[\big\langle\bz^{j}-\bz,  \bw^j -  \tilde{\nabla}_\bz \hL (\bx^j,\bz^j)\big\rangle - \big\langle\bx^{j} - \bx, \bu^{j}-\tilde{\nabla}_\bx \hL (\bx^j,\bz^j) \big\rangle\right] \nonumber \\
 \leq &  \frac{1}{2}\Big( nB_\bx^2+ m B_\bz^2 +   ( M_\bx^2 +M_\bz^2 ) \sum_{j=1}^t \gamma_j^2\Big). 
\end{align*} 
where $ \tilde{\nabla}_\bx \hL (\bx^j,\bz^j) = \bbE\big[\bu^j\mid \hH^j\big]  $ and $ \tilde{\nabla}_\bz \hL (\bx^j,\bz^j) = \bbE\big[\bw^j\mid \hH^j\big]$. 
\end{lemma}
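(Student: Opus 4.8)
The plan is to exploit that the two bracketed inner products are built from \emph{martingale-difference} vectors: writing $\Delta_\bx^j=\bu^j-\tilde{\nabla}_\bx\hL(\bx^j,\bz^j)$ and $\Delta_\bz^j=\bw^j-\tilde{\nabla}_\bz\hL(\bx^j,\bz^j)$, Assumption~\ref{assu:boundL} gives $\bbE[\Delta_\bx^j\mid\hH^j]=\0$ and $\bbE[\Delta_\bz^j\mid\hH^j]=\0$. If $(\bx,\bz)$ were deterministic (hence $\hH^j$-measurable together with $\bx^j,\bz^j$), each summand would vanish after taking conditional expectation and the claim would be immediate. The real difficulty --- and the only place where the argument is nontrivial --- is that $(\bx,\bz)$ is allowed to be \emph{random} and may depend on the whole trajectory, so $\bz^j-\bz$ and $\bx^j-\bx$ are not $\hH^j$-measurable and the conditional-expectation trick fails directly.

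To handle a random $(\bx,\bz)$ I would introduce two adapted ``conjugate'' sequences $\bs^j\in Z$ and $\by^j\in X$, initialized at some $\bs^1\in Z$, $\by^1\in X$ and updated by $\bs^{j+1}=\proj_Z(\bs^j-\gamma_j\Delta_\bz^j)$ and $\by^{j+1}=\proj_X(\by^j+\gamma_j\Delta_\bx^j)$; these depend only on the past and are therefore $\hH^j$-measurable. Then I split each inner product, e.g.
\begin{align*}
\gamma_j\langle\bz^j-\bz,\Delta_\bz^j\rangle=\gamma_j\langle\bz^j-\bs^j,\Delta_\bz^j\rangle+\gamma_j\langle\bs^j-\bz,\Delta_\bz^j\rangle,
\end{align*}
and analogously for the $\bx$-term. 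In the first piece both $\bz^j$ and $\bs^j$ are $\hH^j$-measurable, so its expectation is $\gamma_j\bbE[\langle\bz^j-\bs^j,\bbE[\Delta_\bz^j\mid\hH^j]\rangle]=0$, and the corresponding $\bx$-piece vanishes identically. This is the step that absorbs the randomness of $(\bx,\bz)$: the dependence on $\bz$ (resp.\ $\bx$) is pushed entirely into the second piece, where it enters only through the nonexpansive projection.

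For the second pieces I would use the standard projection/telescoping inequality: since $\bz\in Z$, nonexpansiveness of $\proj_Z$ gives
\begin{align*}
\|\bs^{j+1}-\bz\|^2\leq\|\bs^j-\bz\|^2-2\gamma_j\langle\bs^j-\bz,\Delta_\bz^j\rangle+\gamma_j^2\|\Delta_\bz^j\|^2,
\end{align*}
and summing over $j\in[t]$ telescopes, leaving $\|\bs^1-\bz\|^2\leq mB_\bz^2$ (from Assumption~\ref{assu:compactL} and $\|\cdot\|\leq\sqrt{m}\|\cdot\|_\infty$) plus $\sum_j\gamma_j^2\|\Delta_\bz^j\|^2$; the analogous inequality for $\by^j$ contributes $nB_\bx^2$. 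Taking expectations and using the variance bound $\bbE[\|\Delta_\bz^j\|^2]\leq\bbE[\|\bw^j\|^2]\leq M_\bz^2$ (variance $\leq$ second moment), and likewise $\bbE[\|\Delta_\bx^j\|^2]\leq M_\bx^2$, yields $\tfrac12(mB_\bz^2+M_\bz^2\sum_j\gamma_j^2)$ and $\tfrac12(nB_\bx^2+M_\bx^2\sum_j\gamma_j^2)$; adding the two gives exactly the claimed right-hand side. The only subtlety to track is the sign convention on the two conjugate updates (opposite signs on the $\bx$- and $\bz$-sides), chosen so that both telescoping bounds point in the direction of the desired inequality.
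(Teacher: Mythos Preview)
Your proposal is correct and follows essentially the same approach as the paper: introduce an adapted auxiliary sequence driven by the martingale differences, use $\hH^j$-measurability to kill the piece involving $\bz^j-\bs^j$ (resp.\ $\bx^j-\by^j$), and telescope the remaining piece. The only cosmetic difference is that the paper uses \emph{unprojected} auxiliary sequences $\tilde{\bz}^{j+1}=\tilde{\bz}^j-\gamma_j\Delta_\bz^j$ (and analogously for $\tilde{\bx}$) together with the polarization identity, whereas you project onto $Z$ and $X$ and invoke nonexpansiveness; both routes give the identical bound since only the initial term $\|\bs^1-\bz\|^2\le mB_\bz^2$ (resp.\ $nB_\bx^2$) survives the telescoping.
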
 

Note  that the bounds in  Lemmas~\ref{lem:sum_uLbbE} and \ref{lem:covL}  do not depend on $\bx$ and $\bz$.  
Below, we use the established lemmas to show the main convergence rate result of Algorithm~\ref{alg:APriAD}. Following \cite{nemirovski2009robust}, we adopt the expected primal-dual gap to measure 
the quality of a solution $(\bar{\bx},\bar{\bz})\in X\times Z$ for \eqref{eq:str-dual}, namely, 
\[
\epsilon(\bar{\bx},\bar{\bz}) = \bbE\Big[ \max_{\bz\in Z}\, \hL(\bar{\bx},\bz)  - \min_{\bx\in X} \,\hL(\bx,\bar{\bz})\Big].
\]

\begin{theorem}
\label{thm:convergeL}
Under Assumptions~\ref{assu:compactL} and \ref{assu:boundL}, let $\{(\bx^j,\bz^j)\}$ be generated from Algorithm~\ref{alg:APriAD}. Suppose $\alpha_k = c \rho_k, \forall\, k$ for some constant $c>0$.
Then for any  $t\in[K]$, it holds
\begin{align*}
& \epsilon(\bar{\bx}^t,\bar{\bz}^t)   \leq   
\frac{1}{2(1-\beta_{1}) \sum_{j=1}^t \alpha_j} \Bigg( \theta(n B_{\mathbf{x}}^2 + c m B_{\mathbf{z}}^2) + (n B_{\mathbf{x}}^2 + m B_{\mathbf{z}}^2)  
\\
&\hspace{0.2cm} +  (M_\bx^2+M_\bz^2) \sum_{j=1}^t \alpha_j^2+  \frac{ \sqrt{n}(\theta^2 + M_\bx^2)\sum_{j=1}^t  \alpha_j^2+ c\sqrt{m}(\theta^2 + M_\bz^2)\sum_{j=1}^t  \rho_j^2}{(1\!-\!\beta_1)^2(1\!-\!\beta_2)^{1/2}\theta}  \Bigg).
 \end{align*}
 where $\bar{\bx}^t = \frac{ \sum_{j=1}^t\sum_{k=j}^t \alpha_k  \beta_1^{k-j}   \bx^j}{\sum_{j=1}^t \sum_{k=j}^t \alpha_k  \beta_1^{k-j}  }$, and $\bar{\bz}^t = \frac{  \sum_{j=1}^t\sum_{k=j}^t \alpha_k  \beta_1^{k-j}  \bz^j}{\sum_{j=1}^t \sum_{k=j}^t \alpha_k \beta_1^{k-j}}$.
\end{theorem}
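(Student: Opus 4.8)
The plan is to mirror the proof of Theorem~\ref{thm:converge}, but now I sum the convexity--concavity estimate of Lemma~\ref{lem:L_L} instead of the Lagrangian inequality. Write $\omega_j := \sum_{k=j}^{t}\alpha_k\beta_1^{k-j}$ for $j\in[t]$, so that $\bar\bx^t=\big(\sum_j\omega_j\bx^j\big)/\big(\sum_j\omega_j\big)$ and $\bar\bz^t=\big(\sum_j\omega_j\bz^j\big)/\big(\sum_j\omega_j\big)$. Since $X$ and $Z$ are compact and $\hL$ is continuous, the $\max$ and $\min$ defining $\epsilon(\bar\bx^t,\bar\bz^t)$ are attained; let $\bz^\dagger\in\arg\max_{\bz\in Z}\hL(\bar\bx^t,\bz)$ and $\bx^\dagger\in\arg\min_{\bx\in X}\hL(\bx,\bar\bz^t)$, which are random vectors depending on $(\bar\bx^t,\bar\bz^t)$.

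First I would pass from the averaged iterates to the individual iterates by Jensen's inequality: convexity of $\hL(\cdot,\bz)$ and concavity of $\hL(\bx,\cdot)$ give
\[
\hL(\bar\bx^t,\bz^\dagger)-\hL(\bx^\dagger,\bar\bz^t)\le \frac{\sum_{j=1}^t\omega_j\big[\hL(\bx^j,\bz^\dagger)-\hL(\bx^\dagger,\bz^j)\big]}{\sum_{j=1}^t\omega_j}.
\]
Taking expectations, it then suffices to upper bound $\sum_{j}\omega_j\,\bbE\big[\hL(\bx^j,\bz^\dagger)-\hL(\bx^\dagger,\bz^j)\big]$. For this I would apply Lemma~\ref{lem:L_L} at $(\bx,\bz)=(\bx^\dagger,\bz^\dagger)$, multiply by $(1-\beta_1)\omega_j$, take expectations, and sum over $j\in[t]$. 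The right-hand side splits into three pieces. The primal piece $(1-\beta_1)\sum_j\omega_j\bbE\big[\langle\bx^j-\bx^\dagger,\bu^j\rangle\big]$ is bounded by the first inequality of Lemma~\ref{lem:sum_uLbbE}, contributing $\frac{n\theta B_\bx^2}{2}$ and the $\sqrt{n}(\theta^2+M_\bx^2)\sum_j\alpha_j^2$ term. The dual piece $-(1-\beta_1)\sum_j\omega_j\bbE\big[\langle\bz^j-\bz^\dagger,\bw^j\rangle\big]$ is where the hypothesis $\alpha_k=c\rho_k$ enters: it gives $\omega_j=c\sum_{k=j}^t\rho_k\beta_1^{k-j}$, so after factoring out $c$ the second inequality of Lemma~\ref{lem:sum_uLbbE} yields $c\,\frac{m\theta B_\bz^2}{2}$ and the $c\sqrt{m}(\theta^2+M_\bz^2)\sum_j\rho_j^2$ term.

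The remaining martingale-type piece is controlled by Lemma~\ref{lem:covL} with $\gamma_j=(1-\beta_1)\omega_j$; by the first inequality of Lemma~\ref{lem:beta_ineq} we have $\gamma_j\le\alpha_j$, hence $\sum_j\gamma_j^2\le\sum_j\alpha_j^2$, producing $\tfrac12\big(nB_\bx^2+mB_\bz^2+(M_\bx^2+M_\bz^2)\sum_j\alpha_j^2\big)$. Finally I would collect the three bounds, divide by $(1-\beta_1)\sum_j\omega_j$, and use $\sum_j\omega_j\ge\sum_j\alpha_j$ (again Lemma~\ref{lem:beta_ineq}) to replace the denominator by $(1-\beta_1)\sum_j\alpha_j$; grouping the $\theta$-linear, $\theta$-independent, and $\sum_j\alpha_j^2,\sum_j\rho_j^2$ contributions then reproduces the stated bound.

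The main obstacle is the bookkeeping around the \emph{random} saddle arguments $(\bx^\dagger,\bz^\dagger)$, which depend on the entire trajectory. Unlike the deterministic-argument case, the noise cross-terms do not vanish in expectation, so I must invoke Lemma~\ref{lem:covL} precisely in its form valid for stochastic $(\bx,\bz)$, and likewise apply Lemma~\ref{lem:sum_uLbbE} to random feasible points; this is legitimate because the bounds of both lemmas are uniform in $(\bx,\bz)$ (their proofs control the $\bx,\bz$-dependent parts pathwise via compactness of $X,Z$, while the $\bm$-estimates are independent of $(\bx,\bz)$). A secondary point requiring care is reconciling the two weightings---$\alpha_k\beta_1^{k-j}$ for the primal and $\rho_k\beta_1^{k-j}$ for the dual---which is exactly what $\alpha_k=c\rho_k$ is designed to make compatible, so that a single weight $\omega_j$ governs the averaging of both variables.
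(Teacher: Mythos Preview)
Your proposal is correct and follows essentially the same route as the paper's proof: apply Jensen via the convexity--concavity of $\hL$ to reduce to the weighted sum, invoke Lemma~\ref{lem:L_L}, bound the three resulting pieces with Lemmas~\ref{lem:sum_uLbbE} and~\ref{lem:covL} (using $\alpha_k=c\rho_k$ to convert the dual weights and $\gamma_j=(1-\beta_1)\omega_j\le\alpha_j$), and replace the denominator via $\sum_j\omega_j\ge\sum_j\alpha_j$. One minor slip: the bound $\gamma_j\le\alpha_j$ comes from the \emph{second} inequality of Lemma~\ref{lem:beta_ineq}, not the first.
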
 

\begin{proof}
By the convexity-concavity of $\hL$ and  Lemma~\ref{lem:L_L} , we have that for any $\bx\in X$ and $\bz\in  Z$, 
\begin{align*}
&  \hL (\bar{\bx}^t,\bz) -\hL (\bx,\bar{\bz}^t)   
\leq \sum_{j=1}^t\left(\left( \hL (\bx^j,\bz) -\hL (\bx,\bz^j) \right)  \frac{  \sum_{k=j}^t\alpha_k  \beta_1^{k-j}}{\sum_{j=1}^t \sum_{k=j}^t \alpha_k  \beta_1^{k-j} }\right)\nonumber\\
\overset{\eqref{eq:L_L}}\leq & 
 \frac{1}{ \sum_{j=1}^t \sum_{k=j}^t \alpha_k  \beta_1^{k-j} }
 \bigg( \sum_{j=1}^t\left( \big\langle\bx^{j} - \bx, \bu^{j} \big\rangle - \big\langle\bz^{j} - \bz, \bw^{j} \big\rangle \right) \sum_{k=j}^t \alpha_k  \beta_1^{k-j}  \\
 & +\sum_{j=1}^t\left( \big\langle\bz^{j} \!-\! \bz, \bw^{j}\!-\!\tilde{\nabla}_\bz\hL (\bx^j,\bz^j) \big\rangle \!-\! \big\langle\bx^{j} \!-\! \bx, \bu^{j}\!-\!\tilde{\nabla}_\bx \hL (\bx^j,\bz^j) \big\rangle \right) \sum_{k=j}^t \alpha_k  \beta_1^{k-j}\bigg).  
\end{align*}
Let $\bx\in \mbox{arg}\min_{\bx\in X} \,\hL (\bx,\bar{\bz}^t)$ and $\bz\in  \mbox{arg}\max_{\bz\in Z}\hL (\bar{\bx}^t,\bz)$, and  take the expectation on the above inequality.
By Lemma  \ref{lem:sum_uLbbE} and Lemma~\ref{lem:covL} with $\gamma_j=\sum_{k=j}^t \alpha_k  \beta_1^{k-j}(1\!-\!\beta_1)\leq \alpha_j$  for  the numerator, and $ \sum_{j=1}^t \sum_{k=j}^t \alpha_k  \beta_1^{k-j} \geq  \sum_{j=1}^t  \alpha_j $ from  \eqref{eq:beta_ineq} for the denominator, we have
\begin{align*}
& \epsilon(\bar{\bx}^t,\bar{\bz}^t)  = \bbE\Big[ \max_{\bz\in Z}\,\hL (\bar{\bx}^t,\bz) -\min_{\bx\in X} \,\hL (\bx,\bar{\bz}^t)\Big] \\
\leq 
& \frac{1}{(1-\beta_{1}) \sum_{j=1}^t \alpha_j } \bigg(\frac{\theta (n B^2_\bx+  c m B^2_\bz)}{2} +\frac{1}{2}\Big(  n B_\bx^2+m B_\bz^2 +  ( M_\bx^2+M_\bz^2) \sum_{j=1}^t \alpha_j^2\Big) \\
&\hspace{2cm}+ \frac{\sqrt{n}(\theta^2 +   M_\bx^2) \sum_{j=1}^t\alpha_j^2+c \sqrt{m}(\theta^2 + M_\bz^2) \sum_{j=1}^t\rho_j^2}{2(1\!-\!\beta_1)^2(1\!-\!\beta_2)^{1/2}\theta}\bigg) .
\end{align*} 
Simplifying the above inequality gives the desired result.
\end{proof} 

Below, we specify the choices of $\{\alpha_k\}_{k=1}^K$  and $\{\rho_k\}_{k=1}^K$  and obtain sublinear convergence of Algorithm~\ref{alg:APriAD}.  Corollary~\ref{cor:constant_stepL} can be proved by essentially the same arguments as those in the proof of  Corollary~\ref{cor:cons_step}, and the proof of Corollary~\ref{cor:vary_stepL} is given in Appendix~\ref{app:vary_stepL}.

 \begin{corollary} [Convergence rate with constant step size]\label{cor:constant_stepL}
Given any positive integer $K$, set $\alpha_j = \frac{\alpha}{\sqrt{K}}$ and $\rho_j = \frac{\rho}{\sqrt{K}}$ for all $j\in [K]$ and some positive constants $\alpha, \rho$. Let $\{(\bx^k,\bz^k)\}$ be the sequence generated from Algorithm~\ref{alg:APriAD}. Let $\bar{\bx}^K = \frac{ \sum_{j=1}^K (1-\beta_1^{K-j+1}) \bx^j}{\sum_{j=1}^K (1-\beta_1^{K-j+1}) }$ and $\bar{\bz}^K = \frac{ \sum_{j=1}^K (1-\beta_1^{K-j+1})  \bz^j}{\sum_{j=1}^K(1-\beta_1^{K-j+1})}$.  
Then  
 \begin{align*}
\epsilon(\bar{\bx}^K,\bar{\bz}^K)  \leq  & 
\frac{1}{2(1-\beta_{1}) \alpha\sqrt{K}} \bigg(\theta(n B_{\mathbf{x}}^2 + \frac{m\alpha}{\rho} B_{\mathbf{z}}^2) + (n B_{\mathbf{x}}^2 + m B_{\mathbf{z}}^2) \\
& \hspace{0.8cm} +  \alpha^2(M_\bx^2+M_\bz^2)  + 
 \frac{ \alpha^2 \sqrt{n}(\theta^2 + M_\bx^2)+ \alpha\rho \sqrt{m}(\theta^2 + M_\bz^2)}{(1\!-\!\beta_1)^2(1\!-\!\beta_2)^{1/2}\theta}
 \bigg).
\end{align*} 
 \end{corollary}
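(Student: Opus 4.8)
The plan is to invoke Theorem~\ref{thm:convergeL} directly with $t=K$ and then simplify, exactly paralleling the proof of Corollary~\ref{cor:cons_step}. The first thing to check is that the hypothesis of Theorem~\ref{thm:convergeL}, namely $\alpha_k=c\rho_k$ for a fixed constant $c$, is met: since $\alpha_j=\alpha/\sqrt{K}$ and $\rho_j=\rho/\sqrt{K}$ are both constant across $j$, we have $\alpha_k=(\alpha/\rho)\rho_k$, so the hypothesis holds with $c=\alpha/\rho$.

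Next I would evaluate the three step-size sums that appear in the bound of Theorem~\ref{thm:convergeL}. With the constant choices, $\sum_{j=1}^K\alpha_j=\alpha\sqrt{K}$, $\sum_{j=1}^K\alpha_j^2=\alpha^2$, and $\sum_{j=1}^K\rho_j^2=\rho^2$. Substituting these, the prefactor becomes $\frac{1}{2(1-\beta_1)\alpha\sqrt{K}}$. It then remains only to rewrite the two terms carrying the factor $c$: the term $\theta\,c\,mB_\bz^2$ becomes $\theta\frac{m\alpha}{\rho}B_\bz^2$, and $c\sqrt{m}(\theta^2+M_\bz^2)\sum_{j=1}^K\rho_j^2=\frac{\alpha}{\rho}\sqrt{m}(\theta^2+M_\bz^2)\rho^2=\alpha\rho\sqrt{m}(\theta^2+M_\bz^2)$. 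These reproduce precisely the $\frac{m\alpha}{\rho}B_\bz^2$ and $\alpha\rho\sqrt{m}(\theta^2+M_\bz^2)$ in the statement, while all remaining terms carry over verbatim.

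A small bookkeeping step is to confirm that the averaged iterates $\bar{\bx}^K,\bar{\bz}^K$ defined in the corollary agree with the weighted averages in Theorem~\ref{thm:convergeL}. For the latter the weight attached to $\bx^j$ is $\sum_{k=j}^K\alpha_k\beta_1^{k-j}$; with $\alpha_k=\alpha/\sqrt{K}$ this geometric sum equals $\frac{\alpha}{\sqrt{K}}\cdot\frac{1-\beta_1^{K-j+1}}{1-\beta_1}$, which is proportional to $1-\beta_1^{K-j+1}$. After cancelling the common factor $\frac{\alpha}{\sqrt{K}(1-\beta_1)}$ between numerator and denominator, this is exactly the weight $1-\beta_1^{K-j+1}$ used in the corollary's definition of $\bar{\bx}^K$ and $\bar{\bz}^K$, so the two averaging schemes coincide.

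There is essentially no analytic obstacle here, since all the heavy lifting --- the convexity--concavity estimate (Lemma~\ref{lem:L_L}), the primal/dual descent bounds (Lemma~\ref{lem:sum_uLbbE}), and the zero-mean cross-term control (Lemma~\ref{lem:covL}) --- is already packaged inside Theorem~\ref{thm:convergeL}; the corollary is a pure specialization. The only point deserving attention is the contrast with Corollary~\ref{cor:cons_step}: there one first needed the uniform dual bound of Theorem~\ref{thm:bound_z_t} to control the $\widehat{P}_\bz$-dependent constants. Here no such step is required, because Assumption~\ref{assu:boundL} already supplies the uniform gradient bounds $M_\bx,M_\bz$ (owing to compactness of both $X$ and $Z$), so the bound of Theorem~\ref{thm:convergeL} is already fully explicit and the $O(1/\sqrt{K})$ rate follows immediately once the sums are substituted.
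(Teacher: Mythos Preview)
Your proposal is correct and matches the paper's approach: the paper simply remarks that Corollary~\ref{cor:constant_stepL} ``can be proved by essentially the same arguments as those in the proof of Corollary~\ref{cor:cons_step},'' which amounts exactly to specializing Theorem~\ref{thm:convergeL} with $t=K$, $c=\alpha/\rho$, and the constant-step-size sums $\sum_j\alpha_j=\alpha\sqrt{K}$, $\sum_j\alpha_j^2=\alpha^2$, $\sum_j\rho_j^2=\rho^2$. Your additional observation that no analogue of Theorem~\ref{thm:bound_z_t} is needed here (because Assumption~\ref{assu:boundL} already furnishes uniform gradient bounds) is also accurate.
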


\begin{corollary} [Convergence rate with varying step size]  \label{cor:vary_stepL}   Set $\alpha_j = \frac{\alpha}{\sqrt{j+1}}$  and $\rho_j = \frac{\rho}{\sqrt{j+1}}$ for all $j\in[K]$ and some positive constants $\alpha, \rho$. Let $\{(\bx^k,\bz^k)\}$ be generated from Algorithm \ref{alg:APriAD}. Let $\bar{\bx}^K = \frac{ \sum_{j=1}^K\sum_{k=j}^K \alpha_k  \beta_1^{k-j}   \bx^j}{\sum_{j=1}^K \sum_{k=j}^K \alpha_k  \beta_1^{k-j}  }$ and $\bar{\bz}^K= \frac{  \sum_{j=1}^K\sum_{k=j}^K \alpha_k  \beta_1^{k-j}  \bz^j}{\sum_{j=1}^K \sum_{k=j}^K \alpha_k  \beta_1^{k-j}}$ for $K\geq 2$. Then 
\begin{align*}
\epsilon(\bar{\bx}^K,\bar{\bz}^K)  \leq  & 
\frac{\log(1+K)}{4(1-\beta_{1}) \alpha(\sqrt{K+2}-\sqrt{2})}\bigg(\theta(n B_{\mathbf{x}}^2 + \frac{m\alpha}{\rho} B_{\mathbf{z}}^2) + (n B_{\mathbf{x}}^2 + m B_{\mathbf{z}}^2)  \\
& \hspace{0.8cm} +  \alpha^2(M_\bx^2+M_\bz^2) +  \frac{ \alpha^2 \sqrt{n}(\theta^2 + M_\bx^2)+ \alpha\rho \sqrt{m}(\theta^2 + M_\bz^2)}{(1\!-\!\beta_1)^2(1\!-\!\beta_2)^{1/2}\theta}
 \bigg).
\end{align*}
\end{corollary}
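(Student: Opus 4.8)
The plan is to obtain this bound as a direct specialization of Theorem~\ref{thm:convergeL} at $t = K$, followed by elementary estimates of the step-size sums. First I would check that the hypothesis $\alpha_k = c\rho_k$ of the theorem holds: with $\alpha_j = \frac{\alpha}{\sqrt{j+1}}$ and $\rho_j = \frac{\rho}{\sqrt{j+1}}$ we have $\alpha_j = \frac{\alpha}{\rho}\,\rho_j$ for every $j$, so the condition is met with $c = \alpha/\rho > 0$. Since the weighted averages $\bar{\bx}^K,\bar{\bz}^K$ in the corollary coincide with $\bar{\bx}^t,\bar{\bz}^t$ of the theorem at $t=K$, no re-derivation of the averaging is needed, and the bound of Theorem~\ref{thm:convergeL} applies verbatim. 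It then remains only to control the three quantities $\sum_{j=1}^K\alpha_j$, $\sum_{j=1}^K\alpha_j^2$, and $\sum_{j=1}^K\rho_j^2$ appearing in that bound.

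Next I would estimate these sums by comparison with integrals of monotone functions. For the leading factor, since $(x+1)^{-1/2}$ is decreasing, $\sum_{j=1}^K\alpha_j = \alpha\sum_{j=1}^K(j+1)^{-1/2} \geq \alpha\int_1^{K+1}(x+1)^{-1/2}\,dx = 2\alpha(\sqrt{K+2}-\sqrt{2})$, so that $\frac{1}{2(1-\beta_1)\sum_{j=1}^K\alpha_j} \leq \frac{1}{4(1-\beta_1)\alpha(\sqrt{K+2}-\sqrt{2})}$, which is exactly the prefactor (up to the $\log$) appearing in the claim. For the square sums, $\sum_{j=1}^K\alpha_j^2 = \alpha^2\sum_{j=1}^K(j+1)^{-1} \leq \alpha^2\int_1^{K+1}x^{-1}\,dx = \alpha^2\log(1+K)$, and similarly $\sum_{j=1}^K\rho_j^2 \leq \rho^2\log(1+K)$. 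Substituting $c=\alpha/\rho$ then rewrites the mixed terms into the stated form, for instance $c\,mB_\bz^2 = \frac{m\alpha}{\rho}B_\bz^2$ and $c\sqrt{m}(\theta^2+M_\bz^2)\rho^2 = \alpha\rho\sqrt{m}(\theta^2+M_\bz^2)$.

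The one point that requires care --- and the step I expect to be the crux --- is reconciling the $\log$ factors. After the estimates above, the two terms $\theta(nB_\bx^2+cmB_\bz^2)$ and $nB_\bx^2+mB_\bz^2$ carry no $\log(1+K)$ factor, whereas the desired bound displays a single $\log(1+K)$ multiplying the entire parenthesis. The remedy is to use $\log(1+K)\geq 1$, which holds precisely because the corollary assumes $K\geq 2$ (so $1+K\geq 3 > e$); this lets me bound those two log-free terms from above by their products with $\log(1+K)$, after which $\log(1+K)$ factors out of every term uniformly. Collecting the common factor $\frac{\log(1+K)}{4(1-\beta_1)\alpha(\sqrt{K+2}-\sqrt{2})}$ and simplifying the bracket with $c=\alpha/\rho$ then yields exactly the claimed inequality, so nothing beyond routine algebra remains.
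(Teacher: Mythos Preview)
Your proposal is correct and follows essentially the same route as the paper's own proof: verify the proportionality $\alpha_k=c\rho_k$ with $c=\alpha/\rho$, invoke Theorem~\ref{thm:convergeL} at $t=K$, bound $\sum_j\alpha_j$ from below and $\sum_j\alpha_j^2,\sum_j\rho_j^2$ from above via the same integral comparisons, and then use $\log(1+K)\ge 1$ for $K\ge 2$ to absorb the log-free terms into a common factor. There is nothing to add or correct.
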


 \section{Numerical Experiments}
In this  section, we  compare the numerical performance of our proposed APriD method in Algorithm~\ref{alg:APriD} to CSA  \cite{lan2020algorithms} and MSA \cite{nemirovski2009robust}. The latter two methods have been reviewed in section~\ref{sec:literature}. In all our experiments,  we take $\beta_1 = 0.9, \beta_2 = 0.99,  \theta = 10$ in APriD;  $s=1$ and  $\eta_k =  \eta = 0.04$  in  CSA. 
In \cite{lan2020algorithms}, the  output of CSA is the  weighted  average of $\bx^t$  over $t\in \hB^k = \{t \in [K] \mid\widehat{G}_t\leq  \eta_t  \}$. 
Note that $\hB^k$ may be empty for a small $k$. Hence, we also computed the weighted average of $\bx^t$ over all $t\in [k]$. We call the result of CSA by the former weighted average as CSA1 and the latter as CSA2.  

In \cite{nemirovski2009robust},  the step sizes of MSA for updating $\bx$ and $\bz$ are both equal to $\gamma_k$, as in  \eqref{eq:MSA}.  In  our experiments, we chose different step sizes for the $\bx$ and $\bz$ updates in order to have better performance. 
More specifically, we did the updates:
\[
 \bx^{k+1}  =  \proj_{X}(\bx^{k} -\alpha_k \bu^k) \quad  \mbox{and}\quad  \bz^{k+1} = \proj_{Z}(\bz^k+\rho_k \bw^k ),
\] 
where $\alpha_k$ and $\rho_k$ are the step sizes. We used the same $\alpha_k$ and $\rho_k$ for MSA and APriD. Hence, MSA can be viewed as a non-adaptive counterpart of APriD. We tried different pairs of $(\alpha_k, \rho_k)$ for MSA. It turned out that the best  pair  for MSA was also the best for APriD. With the step size 
$\gamma_k$ in CSA, we denotes the step sizes in the three algorithms as $(\alpha_k, \rho_k, \gamma_k)$.

Three problems were tested in our experiments. The first one is NPC in a finite-sum form;  the second one is QCQP whose  objective and  constraint are both in an expectation form; the third one is QCQP with scenario approximation (i.e., with many quadratic constraints). For the third problem, we also compared to PDSG-adp in \cite{xu2020primal}, for which the update is given in \eqref{eq:xu_x_update}.  
All experiments were run on MATLAB installed on a MacBook Pro with one 2.9 GHz Dual-Core Intel Core i5 processor and 16 GB memory. 

\subsection{Neyman-Pearson Classification Problem}\label{example:NP}
In this subsection, we compare the algorithms on solving instances of
NPC \eqref{eq:NPproblem}.
We take the linear classifier $h(\bx;\ba) =  \bx^\top \ba$ and the convex surrogate $\varphi(z) = \log(1+e^z)$. This way,  \eqref{eq:NPproblem} reduces to
\begin{align}
\min_{\bx} \,&\, \bbE[\log(1+e^{-\bx^\top  \ba})\mid b=+1], \nonumber \\
 \st  \,&\,\bbE[ \log(1+e^{\bx^\top \ba})\mid b=-1]-c\leq 0.\label{eq:NP1} 
\end{align} 
Given a training data set with $n^+$ positive-class samples $\{\ba^+_i\}_{i=1}^{n^+}$ and $n^-$ negative-class samples $\{\ba^-_i\}_{i=1}^{n^-}$,
we can obtain and solve a scenario approximation of \eqref{eq:NP1}, namely,
\begin{align} 
\min_{\bx}\,&\,  f_0(\bx) \equiv \frac{1}{n^+}\sum_{i=1}^{n^+} \log(1+e^{-\bx^\top  \ba_i^+}), \nonumber \\
 \st \,&\, f_1(\bx) \equiv  \frac{1}{n^-}\sum_{i=1}^{n^-} \log(1+e^{\bx^\top  \ba_i^-})- \widehat{c}\leq 0. \label{eq:NP2}
\end{align}  
According to \cite{rigollet2011neyman}, we set $\widehat{c} = c - \frac{\kappa}{\sqrt{n^-}}$, where $\kappa$ is a positive constant  in order to ensure that the feasible solution of \eqref{eq:NP2} is also feasible for \eqref{eq:NP1} in a given high probability.  

\begin{table}[htbp]
\begin{center}
\begin{minipage}{0.96\textwidth}
\caption{Characteristics of data  sets and algorithm parameters. For the data  sets, $d$ is the number of features  of each sample; $n^+$ and $n^-$ are respectively the numbers of positive and negative samples. $\widehat c$  controls the level of the  false-positive error  in \eqref{eq:NP2}.
$J^0$ and $J^1$  are the batch sizes for evaluating the stochastic gradients of the objective and  the constraint functions by the three methods;  $J^g$ is the batch size for obtaining a stochastic estimation of the constraint function value in CSA. $K$ is the total number  of  iterations. 
$(\alpha_k, \rho_k, \gamma_k) = (\frac{\alpha}{\sqrt{K}},   \frac{\rho}{\sqrt{K}},  \frac{\gamma}{\sqrt{K}}),\forall\, k\le K$ are  constant step sizes.
}  \label{tab:data_set}
\begin{tabular}{ccccccc}
\hline
data  set& $d$ & $(n^+, n^-)$ & $\widehat{c}$ &$(J^0,J^1,J^g)$ &  $K$  & $(\alpha,\rho,\gamma)$ \\
\hline
 { \verb|spambase|} \cite{Dua:2019}&57&(1813, 2788)&$-\log(0.7)$ &$(10,10,100)$&$10^5$&
$ (10,1,10)$\\
 { \verb|madelon|}  \cite{guyon2004result}&500&(1300, 1300)&$-\log(0.6)$&$(10,10,100)$&$10^5$&$ (10,1,10)$  \\
 { \verb|gisette|} \cite{guyon2004result}&2000&(3500, 3500)&$-\log(0.6)$&$(30,30,100)$&$10^5$&$ (10,1,10)$\\
\hline
\end{tabular}
\end{minipage}
\end{center}
\end{table}

We use three data sets. The information of the data sets  and the algorithm parameters are given in Table~\ref{tab:data_set}. 
Before feeding the data sets into the methods, we preprocess the data sets. We first 
normalize the data sets feature-wisely to have mean $0$ and standard deviation $1$ and then scale each sample to have unit 2-norm.   

We apply a deterministic method iALM \cite{xu2019iteration} to compute the ``optimal''  solution $\xopt$.  The selected $\widehat{c}$ makes sure $\xopt$ is feasible.
The results, in terms of iteration and time (in seconds), by all compared methods are shown  in Figure~\ref{fig:NP}.
The left two columns of Figure~\ref{fig:NP} are the objective error at the output solutions.  The right two columns show the value of the constraint at the output solutions.
\begin{figure}[!hbtp]
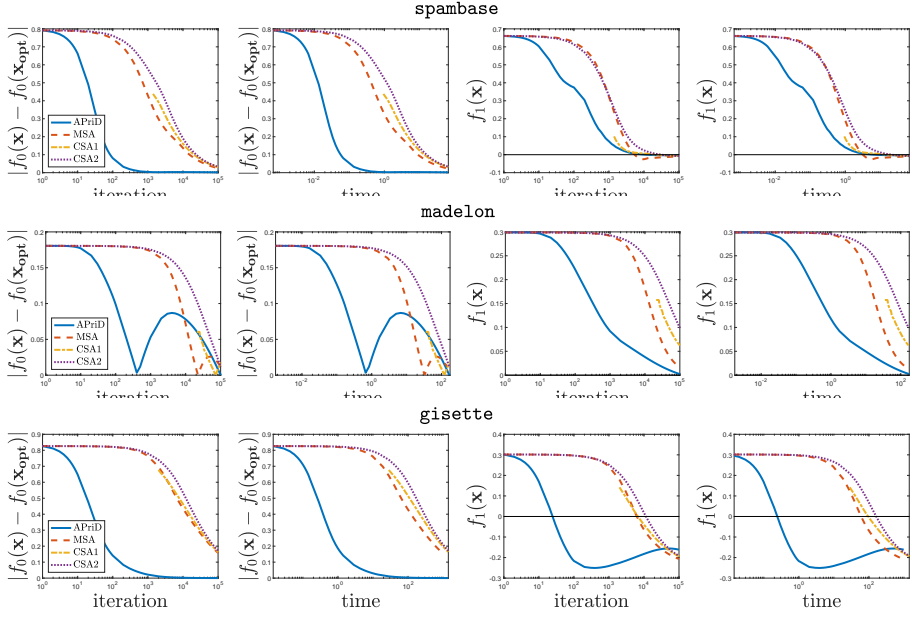

  \begin{center}   
   \includegraphics[width=\textwidth]{./spambase_ks_K_100000_alpha_10_10rho_10_gam_10_f1right_10prob7_1000eta_40}    
    \includegraphics[width=\textwidth]{./madelon_ks_K_100000_alpha_10_10rho_10_gam_10_f1right_10prob6_1000eta_40}   
   \includegraphics[width=\textwidth]{./gisette_ks_K_100000_alpha_10_10rho_10_gam_10_f1right_10prob6_1000eta_40}  
 \caption{ The objective error (Left two columns) and constraint value (Right two columns) by three compared methods on solving Example \ref{example:NP} with three data sets:
   Spambase(Top);  
 Madelon(Middle);  Gisette(Bottom). The curve with legend ``0'' represents the zero-error curve.} \label{fig:NP}
      \end{center}
\end{figure}    
We can see from Figure \ref{fig:NP} that APriD performs the best on the data sets \verb|spambase| and \verb|gisette|, in terms of either objective error or feasibility. For the data set \verb|madelon|, APriD is the fastest to achieve feasibility. 
We also note the lack of results for CSA1 at the beginning iterations because $\hB$ is empty. When CSA1 has  results, it is better than CSA2 since the former only takes ergodic mean on \quotes{good} solutions but the later one takes ergodic mean on \quotes{all} solutions.

\subsection{QCQP in Expectation Form }\label{example:expqcqp} 
In  this subsection,  we conduct experiments on the QCQP in an expectation form:  
\begin{align} 
\min_{\bx\in X} \,&\, f_0(\bx)\equiv \bbE[\frac{1}{2} \|H_{\xi}\bx-\bc_{\xi}\|^2],\nonumber \\ 
\st\,&\, f_1(\bx)\equiv\bbE[\frac{1}{2} \bx^\top Q_\xi \bx+\ba_\xi^\top \bx - b_\xi] \leq 0.  \label{eq:expQCQP}
\end{align}
Here, we set $X = [-10,10]^n$, $\xi$ is a random variable, 
$H_\xi\in \bbR^{p\times n}$ and $\bc_\xi\in \bbR^{p}$ are randomly generated, and their components are generated by standard Gaussian distribution and then normalized. The entries of $\ba_\xi\in \bbR^{n}$ are also generated by standard Gaussian distribution and then normalized; $Q_\xi\in \bbR^{n\times n}$ is a randomly generated symmetric positive  semidefinite matrix with unit 2-norm; $b_\xi$ follows a uniform  distribution  on the open interval $(0.1, 1.1)$. 

While running the algorithms, we generate $H_\xi, \bc_\xi,  Q_\xi, \ba_\xi, b_\xi$ based on the above distribution once needed for function  evaluation or gradient evaluation. Hence, the  function value  and  gradient direction are both unbiased estimations. At a weighted iterate $\bx$, we generate another $10^5$ samples to evaluate the objective value and the constraint function value. Also, we obtain the ``optimal'' solution $\xopt$ by using CVX \cite{cvx,gb08} to solve a sample approximation problem with the generated $10^5$ samples.

\begin{figure}[!hbpt]
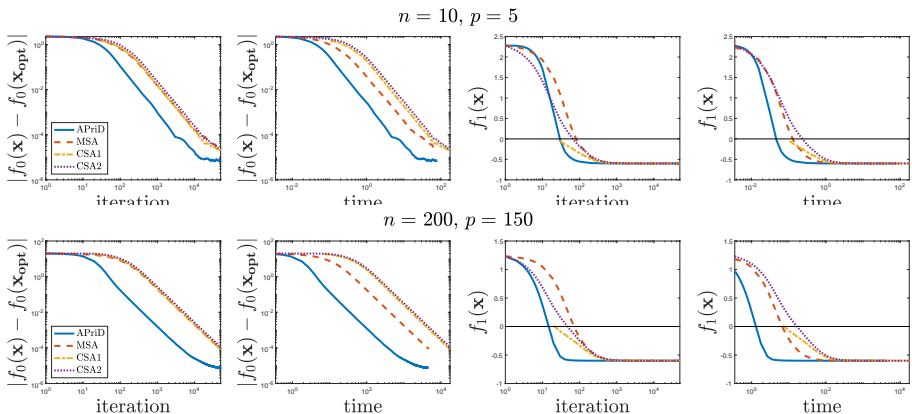

  \begin{center}  
  \includegraphics[width=1.\textwidth]{./QCQP_exp_n_10_p_5_M_100000_N_100000_K_50000_alpha_10_rho_3_gam_10}    
  \includegraphics[width=1.\textwidth]{./QCQP_exp_n_200_p_150_M_100000_N_100000_K_50000_alpha_10_rho_3_gam_10}     
 \caption{The objective error (Left two columns) and constraint value (Right two columns) by three methods on solving  QCQP  instances of \eqref{eq:expQCQP}.   
 } \label{fig:exp_qcqp}
      \end{center}
\end{figure}   
We test the compared algorithms on QCQP instances of size $(n,p)=(10,5)$ and $(n,p)=(200,  150)$. In both instances, we run $K = 5\times 10^4$ iterations, and we set batch size $J=10$ for obtaining stochastic gradients of both objective and constraint functions in all the three methods and $J^g=100$ for obtaining a stochastic estimation of the constraint value in CSA. 
Step sizes are set to  $(\alpha_k, \rho_k, \gamma_k) = (\frac{10}{\sqrt{K}},   \frac{10}{\sqrt{K}},  \frac{\sqrt{10}}{\sqrt{K}})$ for all $k$.
The results  in terms of iteration and time (in seconds) are shown in Figure \ref{fig:exp_qcqp}.  
From the results, we see again that APriD significantly outperforms over other two compared methods.  

\subsection{Finite-sum structured QCQP with many constraints}\label{example:scen_qcqp} In this subsection, we  test the algorithms on the QCQP with a finite-sum objective and many constraints: 
\begin{align}\label{eq:qcqp-finite}
\min_{\bx\in X}\,&\, f_0 (\bx) \equiv  \frac{1}{2N}\sum_{i=1}^N \|H_i\bx-\bc_i\|^2,\nonumber\\
\st \,&\, f_j(\bx)\equiv\frac{1}{2} \bx^\top Q_j\bx+\ba_j^\top \bx -  b_j\le 0,\, j\in[M].
\end{align}
We set $X = [-10,10]^n$ in the experiment. $H_i, \bc_i$ for $i\in [N]$ and  $Q_j, \ba_j, b_j$ for $j\in [M]$ are independently generated from  the  same distribution as $H_\xi,\bc_\xi,Q_\xi,\ba_\xi,b_\xi$ in section~\ref{example:expqcqp}.  
Two different-size QCQP instances are generated: one with $(n,p) = (10, 5)$ and the other with $(n,p)=(200, 150)$. We set $N=M=10^4$ in both instances. Besides CSA and MSA, we also compare APriD with the adaptive method in \cite{xu2020primal}, called PDSG-adp. The update of PDSG-adp is shown in \eqref{eq:xu_x_update}. For each update of the compared methods, we randomly select $10$ component functions of the objective and $10$ constraint functions for evaluating stochastic gradients, and for CSA, we randomly pick $100$ constraint functions to obtain a stochastic estimation of $g(\bx)\equiv\sum_{j=1}^M[f_j(\bx)]_+$. 
In both instances, we  run $K=5\times 10^4$ iterations with step sizes $(\alpha_k, \rho_k, \gamma_k) = (\frac{10}{\sqrt{K}}, \frac{\sqrt{10}}{\sqrt{K}},  \frac{10}{\sqrt{K}})$ for all $k$. 
We run PDSG-adp to $K=5\times 10^4$ iterations with $\alpha_k=\frac{20}{\sqrt{K}}$, $\rho_k=\frac{\sqrt{10}}{\sqrt{K}},\forall\, k$ and $\eta=0.1$.   

For the smaller-size instance, we obtain the optimal  solution $\xopt$ by  CVX \cite{cvx,gb08}; for the larger one, we use as an estimated optimal solution  $\bx_{\mathbf{opt}}$ that is feasible and has the smallest objective value among all iterates  from APriD, CSA, MSA and PDSG-adp.  
 We report the objective error, the averaged constraint violation measured by $\frac{1}{M}\sum_{j=1}^M[f_j(\bx)]_+$, and the maximum constraint violation measured by $\max_{j\in [M]}[f_j(\bx)]_+$. The results in terms of iteration and time (in seconds) are shown in Figure  \ref{fig:scen_qcqp}. From the results, we see again that APriD outperforms over the other methods in terms of any of the three measures we use.
\begin{figure}[!hbtp]
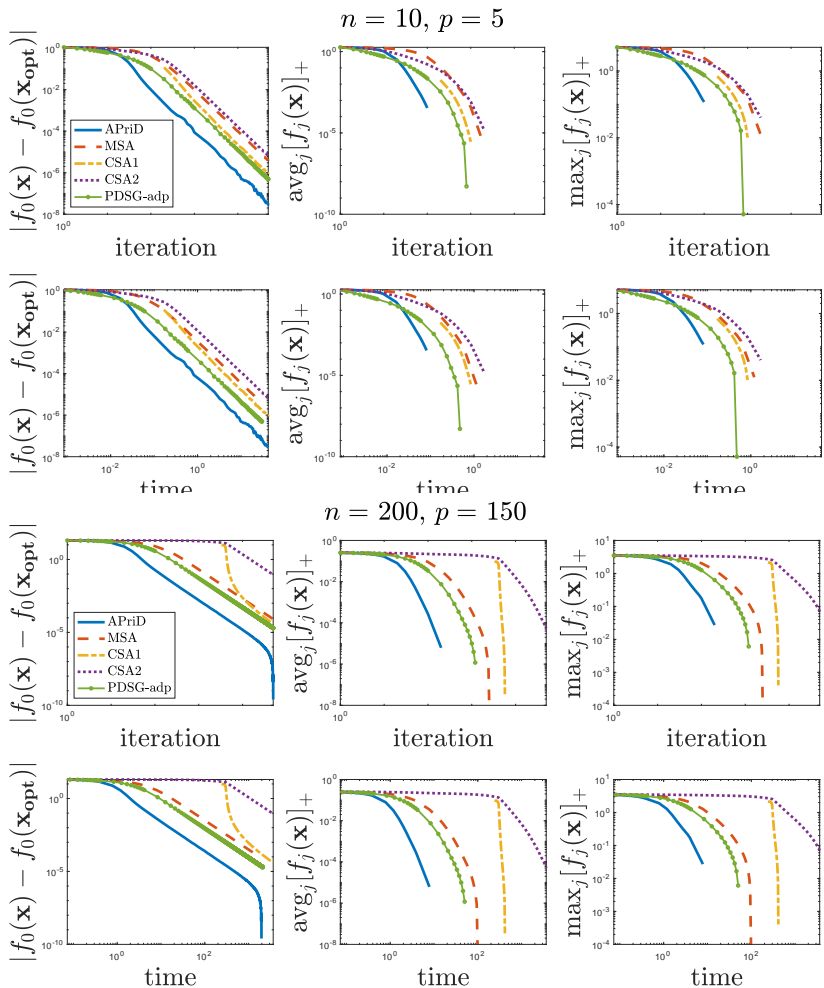

  \begin{center}   
   \includegraphics[width=0.9\textwidth]{./QCQP_scenario_n_10_p_5_M_10000_N_10000_K_50000_alpha_10_rho_3_gam_10}     
  \includegraphics[width=0.9\textwidth]{./QCQP_scenario_n_200_p_150_M_10000_N_10000_K_50000_alpha_10_rho_3_gam_10}     
 \caption{The objective error (Left), averaged constraint violation (Middle), maximum constraint violation (Right) by  four methods on solving QCQP instances of \eqref{eq:qcqp-finite}.  Rows 1 and 3 are with respect to iteration; rows 2 and 4 are with respect to time (in seconds).
 } \label{fig:scen_qcqp}
      \end{center}
\end{figure}  

\subsection{Computing time comparison}\label{example:time}
In Table \ref{tab:compare_time}, we compare the total running time (in seconds) of all the tested methods in subsections~\ref{example:NP}-\ref{example:scen_qcqp}. From the table, we see that although APriD needs extra computation in \eqref{eq:m_update}-\eqref{eq:x_update}, it takes similar amount of time (and thus has similar per-iteration cost) as MSA in all examples and PDSG-adp for the example in subsection~\ref{example:scen_qcqp}. CSA always takes more time than the other methods because of the extra estimation of $g(\bx^k)$ in each iteration.

\begin{table}[htbp]
\begin{center}
\begin{minipage}{0.85\textwidth}
\caption{Total running time (in seconds) of all the methods that are tested in subsections~\ref{example:NP}-\ref{example:scen_qcqp}. PDSG-adp is only applied to QCQP \eqref{eq:qcqp-finite}.}
\label{tab:compare_time}
\begin{tabular}{cccccc}
\hline
Example & data or size & APriD & MSA & CSA & PDSG-adp \\
\hline
\multirow{3}{*}{NPC \eqref{eq:NP2} }&  \verb|spambase| & 60.3 & 58.8 & 67.1 & -  \\
&\verb|madelon| & 175.5 & 144.6 & 167.4 & - \\
&\verb|gisette| & 983.8 & 954.8 & 1416.6 &- \\  \hline
 \multirow{3}{*}{QCQP \eqref{eq:expQCQP} } &(10,5) & 78.8 & 79.6 & 177.1 &-\\
&(200,150) & 4451.9 & 4722.6 & 17592.7 &- \\ \hline 
 \multirow{3}{*}{QCQP \eqref{eq:qcqp-finite} } &(10,5) & 40.7 & 30.0 & 42.2 & 29.7  \\  
&(200,150) & 1964.2 & 1966.0 & 3638.1 & 2087.2 \\
\hline
\end{tabular}
\end{minipage}
\end{center}
\end{table}

\subsection{Effect of hyper-parameters on algorithm performance}\label{example:param}
In this subsection, we test how the choices of $\alpha, \rho$ (in the constant step size $(\alpha_k, \rho_k)=(\frac{\alpha}{K}, \frac{\rho}{K})$ for all $k\le K$) and $\theta$ affect the performance of APriD. For all tests in this subsection, we apply APriD to solve instances of NPC \eqref{eq:NP2} with the \verb|spambase| and \verb|gisette| data sets and QCQP instances \eqref{eq:expQCQP} with $(n,p)=(10, 5)$ and $(200, 150)$. It turns out that APriD can perform reasonably well for a wide range of values of the hyper-parameters.

\begin{figure}[!hbtp]
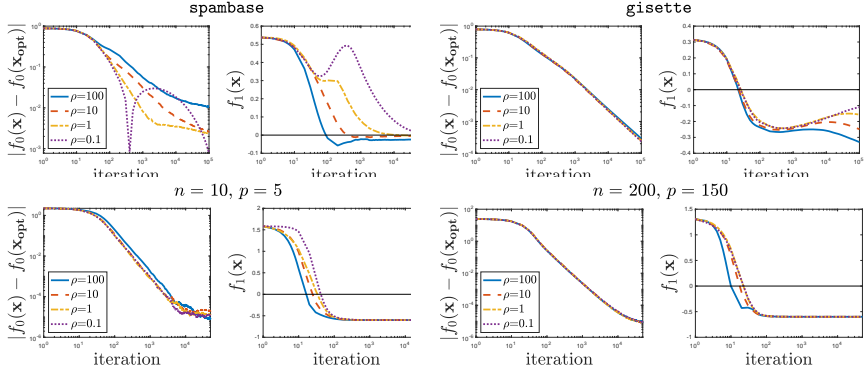

  \begin{center}   
   \includegraphics[width=0.47\textwidth]{./spambase_fixed_theta_alpha}      
  \includegraphics[width=0.47\textwidth]{./gisette_fixed_theta_alpha}     
   \includegraphics[width=0.47\textwidth]{./QCQP_mosek_exp_n10_p5_fixed_theta_alpha}      
  \includegraphics[width=0.47\textwidth]{./QCQP_mosek_exp_n200_p150_fixed_theta_alpha}   
 \caption{The objective error and constraint violation  by APriD with $\theta=10, \alpha=10$ and different values of $\rho$  on solving instances of NPC \eqref{eq:NP2} with data sets spambase (TopLeft) and gisette (TopRight) and on solving instances of QCQP \eqref{eq:expQCQP} with $(n,p)=(10, 5)$ (BottomLeft)  and $(n,p)=(200, 150)$ (BottomRight).
  } \label{fig:fixed_theta_alpha}
      \end{center}
\end{figure} 

\begin{figure}[!hbtp]
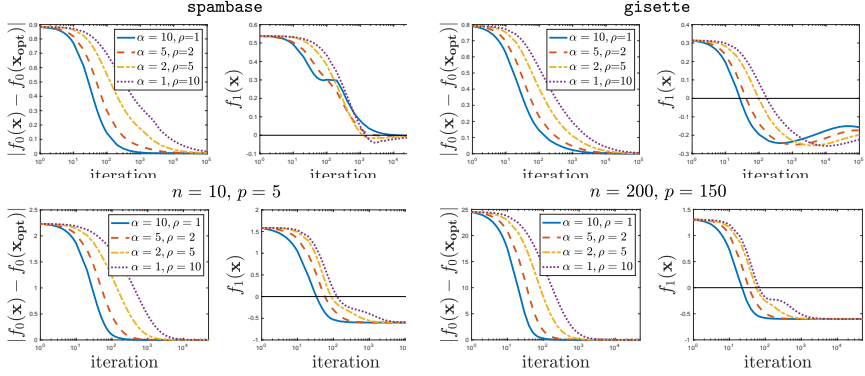

  \begin{center}   
   \includegraphics[width=0.47\textwidth]{./spambase_fixed_theta}      
  \includegraphics[width=0.47\textwidth]{./gisette_fixed_theta}     
   \includegraphics[width=0.47\textwidth]{./QCQP_mosek_exp_n10_p5_fixed_theta}      
  \includegraphics[width=0.47\textwidth]{./QCQP_mosek_exp_n200_p150_fixed_theta}   
 \caption{The objective error and constraint violation by APriD with different pairs of $(\alpha,\rho)$  and a fixed $\theta=10$, on solving instances of NPC \eqref{eq:NP2} with data sets spambase (TopLeft) and gisette (TopRight) and on solving instances of QCQP  \eqref{eq:expQCQP} with $(n,p)=(10, 5)$  (BottomLeft)  and $(n,p)=(200, 150)$ (BottomRight).
 } \label{fig:fixed_theta}
      \end{center}
\end{figure}   

First, we test the effect of $\rho$ by fixing $\theta=10$ and $\alpha=10$. 
From the results in Figure \ref{fig:fixed_theta_alpha}, we see that the algorithm performs similarly well with different values of $\rho$. The difference is most obvious for the instance of  NPC on the \verb|spambase| data set. Zooming in details of the curves, we can observe that with the biggest $\rho=100$, the objective error decreases slowest but the constraint value decreases fastest, and the convergence behavior with the smallest $\rho=0.1$ is exactly the opposite. 

Second, we fix $\theta=10$ and test the effect of $(\alpha, \rho)$. For simplicity, we fix the product $\alpha\rho=10$ and test the algorithm with different pairs of $(\alpha, \rho)$.  The results are shown in Figure \ref{fig:fixed_theta}. It turns out that the algorithm with a larger $\alpha$ tends to converge faster in terms of both the objective error and the constraint violation. Nevertheless, the influence by the choice of $(\alpha, \rho)$ is not severe, and the algorithm with all four different pairs of $(\alpha, \rho)$ can perform reasonably well.

Finally, we test the effect of $\theta$ by fixing $\alpha=10$ and $\rho=10$. We vary $\theta\in\{10^{-4}, 10^{-3}, 10^{-2}, 10^{-1}, 1, 10\}$. The results are shown in Figure~\ref{fig:vary_theta}. For the instances of NPC \eqref{eq:NP2}, there is almost no difference for $\theta\in\{0.1, 1, 10\}$. This is probably because these values of $\theta$ do not trigger the clipping. For the \verb|spambase| data set, the best results appear to be given by $\theta=10^{-2}$ and $\theta=10^{-3}$. The algorithm can still perform well with $\theta=10^{-4}$, but the corresponding feasibility curve is less smooth. Similar observations are made to the \verb|gisette| data set. For QCQP instances of \eqref{eq:expQCQP},  the performance of the algorithm is more affected by the value of $\theta$. The algorithm performs better as $\theta$ decreases from 10 to $10^{-2}$, in terms of both suboptimality and infeasibility. However, for the case of $n=200$ and $p=150$, the infeasibility increases rapidly in the beginning iterations. These observations match with our discussion in Remark~\ref{rm:effect-theta}, i.e., the best value of $\theta$ should not be extremely small.    
\begin{figure}[!hbtp]
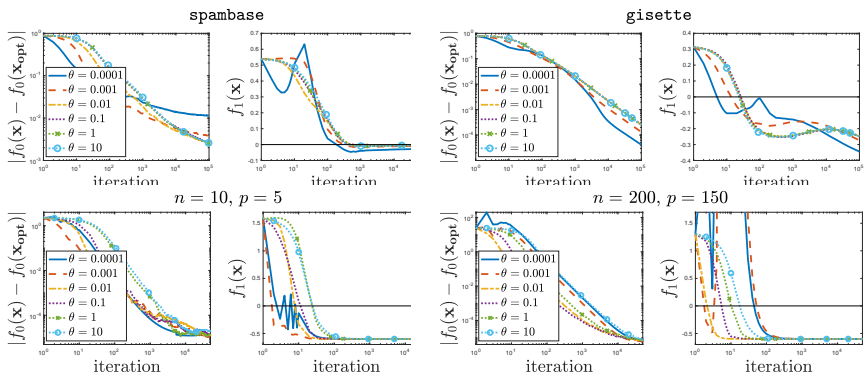

  \begin{center}   
   \includegraphics[width=0.47\textwidth]{./spambase_vary_theta}      
  \includegraphics[width=0.47\textwidth]{./gisette_vary_theta}     
   \includegraphics[width=0.47\textwidth]{./QCQP_mosek_exp_n10_p5_vary_theta}      
  \includegraphics[width=0.47\textwidth]{./QCQP_mosek_exp_n200_p150_vary_theta}   
 \caption{The objective error and constraint violation by APriD with  $\alpha=10, \rho=10$ and $\theta\in\{10^{-4}, 10^{-3}, 10^{-2}, 10^{-1}, 1, 10\}$ on solving instances of NPC \eqref{eq:NP2} with data sets spambase (TopLeft) and gisette (TopRight) and on solving instances of QCQP \eqref{eq:expQCQP} with $(n,p)=(10, 5)$  (BottomLeft)  and $(n,p)=(200, 150)$ (BottomRight).
 } \label{fig:vary_theta}
      \end{center}
\end{figure}   

 \section{Conclusions}  
 We have proposed an adaptive primal-dual stochastic gradient  method (SGM) for solving expectation-constrained convex stochastic programming.  The method is designed based on the Lagrangian function. At each iteration, it first inquires an unbiased stochastic estimation of the subgradient of the Lagrangian function, and then it performs an adaptive SGM update to the primal variables  and a vanilla SGM step to the dual variables. The method has also been extended with a modification to  solve stochastic convex-concave minimax problems.
For both methods, we have established the convergence rate of $O(1/\sqrt{k})$, where $k$ is the number of inquiries of the stochastic subgradient. Numerical experiments on  three examples demonstrate its superior practical performance over two state-of-the-art methods.  

\section*{Acknowledgements} The authors would like to thank three anonymous reviewers for their valuable comments and suggestions to improve the quality of the paper and also for the careful testing on our codes. The authors are partly supported by the NSF award 2053493 and the RPI-IBM AIRC faculty fund.

\appendix

\section{Proof of Lemma~\ref{lem:rho}}\label{app:rho}
\begin{proof} 
First, consider the case of non-constant primal step size.
By $\eta_1 = \sum_{i=1}^K \alpha_i\beta_1^{i-1}$ and the $\eta$-update in  \eqref{eq:rho_from_eta_update}, we have $\eta_k = \sum_{i=k}^K \alpha_i  \beta_1^{i-k}, k\in[K]$, and thus the $\rho$-update becomes 
\begin{equation*}
   \rho_k = \frac{\rho_{k-1}}{\beta_1 + \frac{\alpha_{k-1} }{\sum_{i=k}^K \alpha_i  \beta_1^{i-k}} }.  
\end{equation*} 
By the above  equation, we have for $2\leq  j\leq K$,
 \begin{align*} 
&\rho_j  = \frac{\rho_{j-1}}{\beta_1 + \frac{\alpha_{j-1} }{\sum_{k=j}^K \alpha_k  \beta_1^{k-j}} }  \leq \frac{\rho_{j-1}}{\beta_1 + \frac{\alpha_{j-1}}{\alpha_{j-1}\sum_{k=j}^K \beta_1^{k-j}} }  
 \leq  \frac{\rho_{j-1}}{\beta_1+ \frac{1}{\sum_{k=j}^\infty \beta_1^{k-j}} } 
 = \rho_{j-1},
\end{align*}
where the inequality follows from the non-increasing monotonicity of $\{\alpha_j\}_{j= 1}^K$  and $\beta_1\in(0,1)$. Hence, $\{\rho_j\}_{j= 1}^K$ is a non-increasing sequence. Using \eqref{eq:rho_from_eta_update}  again, we have for $2\leq j\leq t\leq K$,
 \[\rho_j   
 =  \frac{\rho_{j-1}}{\beta_1 + \frac{\alpha_{j-1} }{\sum_{k=j}^K \alpha_k  \beta_1^{k-j}} } \geq \frac{\rho_{j-1}}{\beta_1 + \frac{\alpha_{j-1} }{\sum_{k=j}^t \alpha_k  \beta_1^{k-j}} }= \frac{\sum_{k=j}^t \alpha_k  \beta_1^{k-j}}{\sum_{k=j-1}^t \alpha_k  \beta_1^{k-(j-1)}}\rho_{j-1},\]
 which clearly implies the inequality in \eqref{eq:rho_1}.
 
 For $j=1$, \eqref{eq:rho_2}  holds because $\beta\in(0,1)$.  To show it holds for  $2\leq j\leq K$, we rewrite $\rho_j$ and obtain
\begin{align*}
&\rho_j = \frac{\rho_{j-1}}{\beta_1 + \frac{\alpha_{j-1} }{\sum_{k=j}^K \alpha_k  \beta_1^{k-j}} } = \frac{\sum_{k=j}^K \alpha_k  \beta_1^{k-j}}{\sum_{k=j-1}^K \alpha_k  \beta_1^{k-(j-1)}} \rho_{j-1}\\
=& \frac{\sum_{k=j}^K \alpha_k  \beta_1^{k-j}}{\sum_{k=j-1}^K \alpha_k  \beta_1^{k-(j-1)}} \times \frac{\sum_{k=j-1}^K \alpha_k  \beta_1^{k-(j-1)}}{\sum_{k=j-2}^K \alpha_k  \beta_1^{k-(j-2)}} \times \cdots \times \frac{\sum_{k=2}^K \alpha_k  \beta_1^{k-2}}{\sum_{k=1}^K \alpha_k  \beta_1^{k-1}} \rho_{1} \\
=&  \frac{\sum_{k=j}^K \alpha_k  \beta_1^{k-j}}{\sum_{k=1}^K \alpha_k  \beta_1^{k-1}} \rho_{1} 
\leq  \frac{  \frac{\alpha_j}{(1\!-\!\beta_1)}}{\sum_{k=1}^K \alpha_k  \beta_1^{k-1} } \rho_{1} \leq  
\frac{  \frac{\alpha_j}{(1\!-\!\beta_1)}}{\alpha_1} \rho_{1}
=\frac{ \rho_{1} \alpha_j}{\alpha_1(1\!-\!\beta_1)},
\end{align*}
where the third equation recursively applies the second equation, and the inequalities hold by the two inequalities in \eqref{eq:beta_ineq}. 

Now, consider the case of constant primal step size, i.e.,  
$\alpha_j = \alpha_1$ for all $j\in[K]$. 
We can prove $\eta_j = \frac{\alpha_1}{1-\beta_1}$ for all $j\in[K]$  by the induction, and thus  
\[
\rho_j = \frac{\rho_{j-1}}{\beta_1 + \frac{\alpha_1 }{\alpha_1/(1\!-\!\beta_1)}} =   \frac{\rho_{j-1}}{\beta_1 + (1\!-\!\beta_1)}  =  \rho_{j-1},\forall \, j\ge2,
\]
which completes the proof.
\end{proof} 

\section{Proof of Lemma~\ref{lem:3.1}}\label{app:3.1}
\begin{proof}
As $(\bx^*,\bz^*)$ satisfies the KKT conditions in  Assumption~\ref{assu:kkt}, there are $\tilde{\nabla}f_i(\bx^*), \forall i\in [M]$ such that 
\[
-\sum_{i=1}^{M} z_i^*\tilde{\nabla}f_i(\bx^*) \in \partial f_0(\bx^*) + \hN_X(\bx^*).
\]
From the convexity of $f_0$ and $X$, it follows that $ f_0(\bx) - f_0(\bx^*)\geq - \big\langle\sum_{i=1}^{M} z_i^*\tilde{\nabla}f_i(\bx^*), \bx-\bx^*\big \rangle, \forall \bx\in X$.
Since  $f_i$ is convex for each $i\in [M]$, we have  $f_i(\bx) - f_i(\bx^*) \geq \langle \tilde{\nabla}f_i(\bx^*),\bx-\bx^*\rangle$. 
Noticing $\bz^*\geq \0$, we have  for any $ \bx\in X$,
\[ f_0(\bx) - f_0(\bx^*)\geq - \sum_{i=1}^{M} z_i^*\langle\tilde{\nabla}f_i(\bx^*), \bx-\bx^* \rangle \geq -\sum_{i=1}^M  z_i^*(f_i(\bx)-f_i(\bx^*)).\]
Because $z_i^*f_i(\bx^*)=0$ for all $i\in [M]$, we obtain \eqref{eq:2.41}.

Furthermore, for  any $\bz\geq\0$, we  have $\langle \bz, \bff(\bx^*)\rangle \leq 0$ from  $f_i(\bx^*)\leq 0$,  $\forall i \in [M]$. Hence, combining with \eqref{eq:2.41}, we have the inequality in \eqref{eq:2.4}.
\end{proof} 

\section{Proof of Lemma~\ref{lem:bound_v}}\label{app:bound_v}
\begin{proof}  
For $\widehat{\bu}^k$ given in \eqref{eq:u_hat},  we  have  $\|\widehat{\bu}^k\| \leq \theta $ and thus each coordinate of $\widehat{\bu}^k$  is also less than $\theta$, i.e.  $-\theta  \1 \leq \widehat{\bu}^k \leq \theta \1$. 
Recursively  rewriting the updates in  \eqref{eq:m_update}, \eqref{eq:v_update1} and  \eqref{eq:v_update2} gives 
\begin{align}
&\bm^k = \beta_{1}\bm^{k-1} + (1-\beta_{1})\bu^k= (1-\beta_{1})\sum_{j=1}^{k} \beta_{1}^{k-j} \bu^j, \label{eq:m_recursive} \\
&\bv^k = \beta_2 \bv^{k-1} + (1-\beta_2)(\widehat\bu^k)^2=(1-\beta_2)\sum_{j=1}^k \beta_2^{k-j} (\widehat\bu^j)^2,\label{eq:v_recursive} \\
&\widehat{\bv}^k =  \max\{\widehat{\bv}^{k-1}, \bv^k\} = \max_{j\in [k]}  \bv^j,\label{eq:vhat_recursive}
\end{align}
here, $\bm^0=\bv^0=\widehat{\bv}^0=\0$. 
By \eqref{eq:v_recursive} and $-\theta  \1 \leq\widehat{\bu}^k \leq \theta  \1$, we  have  $ \bv^k  \leq \theta^2 (1-\beta_2)\sum_{j=1}^k \beta_2^{k-j} \1  \leq  \theta^2 \1$.  By \eqref{eq:vhat_recursive}, we  further have 
$ \widehat{\bv}^k   \leq \theta^2 \1$. Thus $ \bbE \big[\|(\widehat{\bv}^k)^{1/2}\|_1\big] \leq n\theta$ holds.

Notice $  \bbE \big[\|\bm^{k}\|_{(\widehat\bv^k)^{-{1/2}}}^2\big]  = \bbE\big[\|\frac{\bm^{k}}{(\widehat\bv^k)^{{1/4}}}\|^2\big]$. We can lower bound $\bv^k$  by  keeping only the last term in  \eqref{eq:v_recursive}  since  $(\widehat\bu^{j})^2\geq \0$, i.e. $ \bv^k \geq (1-\beta_2)(\widehat\bu^k)^2 $.  By \eqref{eq:vhat_recursive}, we also have 
$\widehat{\bv}^k   \geq  (1-\beta_2)  \max_{j\in [k]} (\widehat\bu^j)^2.$
 Plugging the inequality and  \eqref{eq:m_recursive} into $  \bbE \big[\|\bm^{k}\|_{(\widehat\bv^k)^{-{1/2}}}^2\big]  $ gives
 \begin{align}
 &\bbE  \big[ \|\bm^{k}\|_{( \widehat{\bv}^k )^{-{1/2}}}^2\big] \leq \bbE\left[ \left\|  \frac{\bm^k}{{\big((1-\beta_2) \max_{j\in [k]}  (\widehat\bu^j)^2\big)^{1/4}}} \right\|^2\right]\nonumber\\ =&\frac{(1\!-\!\beta_1)^2}{(1-\beta_2)^{1/2}}  \bbE\left[ \left\| \frac{\sum_{j=1}^{k} \beta_{1}^{k-j} \bu^j }{\max_{j\in [k]} \mid\widehat\bu^j\mid^{1/2}}\right\|^2\right]. \label{eq:311}
\end{align} 
Then we bound $\left\|\frac{\sum_{j=1}^{k} \beta_{1}^{k-j} \bu^j }{\max_{j\in [k]} \mid\widehat\bu^j\mid^{1/2}} \right\|^2$ by the Cauchy-Schwarz inequality.
\begin{align}
&\left\|\frac{\sum_{j=1}^{k} \beta_{1}^{k-j} \bu^j }{\max_{j\in [k]} \mid\widehat\bu^j\mid^{1/2}}\right\|^2 \!=\! \left\|\sum_{j=1}^k (\beta_1^{k-j})^{1/2}  \frac{(\beta_{1}^{k-j})^{1/2} \bu^j }{\max_{j\in [k]} \mid\widehat\bu^j\mid^{1/2}}\right\|^2 \nonumber\\
\leq&  \bigg( \sum_{j=1}^{k} \Big((\beta_{1}^{k-j})^{1/2}\Big)^2  \bigg)
\bigg(\sum_{j=1}^{k}\left\|\frac{(\beta_{1}^{k-j})^{1/2}  \bu^j }{\max_{j\in [k]} \mid\widehat\bu^j\mid^{1/2}}\right\|^2\bigg) \nonumber\\
=& \Big( \sum_{j=1}^{k}  \beta_{1}^{k-j}  \Big)
\bigg(\sum_{j=1}^{k}\beta_{1}^{k-j}\left\|\frac{ \bu^j }{\max_{j\in [k]} \mid\widehat\bu^j\mid^{1/2}}\right\|^2\bigg)\nonumber\\
\leq & \Big( \sum_{j=1}^{k}  \beta_{1}^{k-j}  \Big)
\bigg(\sum_{j=1}^{k}\beta_{1}^{k-j}\left\|\frac{ \bu^j }{\mid\widehat\bu^j\mid^{1/2}}\right\|^2\bigg),
\label{eq:312}
\end{align}
where we  use $\max_{j\in [k]} \mid\widehat\bu^j\mid^{1/2}\geq  \mid\widehat\bu^j\mid^{1/2}$ for  $j\in [k]$ in the second inequality.
For  $\left\|\frac{ \bu^j }{\mid\widehat\bu^j\mid^{1/2}}\right\|^2$, we have $\widehat\bu^j$ given in \eqref{eq:u_hat} and notice $\max\big\{1, \frac{\|\bu^k\|}{\theta}\big\}$  is a scalar.   
\begin{align*}
&\left\|\frac{ \bu^j }{\mid\widehat\bu^j\mid^{1/2}}\right\|^2=
\bigg\|\frac{ \bu^j }{ \mid\frac{\bu^j}{\max\big\{1, \frac{\|\bu^j\|}{\theta}\big\}}\mid^{1/2}}\bigg\|^2 =   \max\big\{1, \frac{\|\bu^j\|}{\theta}\big\}  \left\|  \bu^j  \right\|_1 \nonumber\\
\leq&  \max\big\{1, \frac{\|\bu^j\|}{\theta}\big\}  \sqrt{n}\big\|  \bu^j  \big\|
= \begin{cases}  
 \sqrt{n} \big\|\bu^j\big\|  , & \mbox{if } \|\bu^j\| \leq \theta, \\ 
  \frac{\sqrt{n}\big\|\bu^j\big\|^2}{\theta} , & \mbox{if }  \|\bu^j\|> \theta.
  \end{cases}
\end{align*}  
So we get $\left\|\frac{ \bu^j }{\mid\widehat\bu^j\mid^{1/2}}\right\|^2\leq  \sqrt{n} \left(\theta + \frac{\|\bu^j \|^2}{\theta}\right)$.
Plug the inequality back to \eqref{eq:312}, and then \eqref{eq:312}  back to  \eqref{eq:311}. 
\begin{align*}
 &\bbE \big[\|\bm^{k}\|_{(\widehat\bv^k)^{-{1/2}}}^2\big]  \leq \frac{(1\!-\!\beta_1)^2}{(1-\beta_2)^{1/2}}  \bbE\left[   \Big( \sum_{j=1}^{k}  \beta_{1}^{k-j}  \Big)
\Bigg(\sum_{j=1}^{k}\beta_{1}^{k-j} 
 \sqrt{n} \bigg(\theta + \frac{\|\bu^j \|^2}{\theta}\bigg)
\Bigg)\right] \\
= & \frac{ \sqrt{n} (1\!-\!\beta_1)^2}{(1-\beta_2)^{1/2}}   \Big( \sum_{j=1}^{k}  \beta_{1}^{k-j}  \Big)
\Bigg(\sum_{j=1}^{k}\beta_{1}^{k-j} 
\bigg(\theta + \frac{ \bbE\left[ \|\bu^j \|^2\right]}{\theta}\bigg)
\Bigg)\\
\leq&  \frac{ \sqrt{n} (1\!-\!\beta_1)^2}{(1-\beta_2)^{1/2}}   \Big( \sum_{j=1}^{k}  \beta_{1}^{k-j}  \Big)^2\max_{j\in[k]} 
\bigg(\theta + \frac{ \bbE\left[ \|\bu^j \|^2\right]}{\theta}\bigg) \\
\leq& \frac{\sqrt{n} \Big(\theta + \frac{\max_{j\in[k]} \bbE \left[ \|\bu^j \|^2 \right]}{\theta}\Big)}{(1-\beta_2)^{1/2}},
\end{align*} 
where we  have used  $ \sum_{j=1}^{k} \beta_{1}^{k-j}\leq \sum_{j=1}^{\infty} \beta_{1}^{j}\leq \frac{1}{1-\beta_1}$ for $\beta_1\in(0,1)$ in the  last inequality. 
With \eqref{eq:P_hat}, the proof  is finished. 
\end{proof} 

\section{Proof of Lemma~\ref{lem:primal_x}}\label{app:primal_x}
\begin{proof}
 From the projection  \eqref{eq:x_update} in the primal variable update, we have for $k\in  [K]$ and $ \forall \bx\in X$,
\begin{align}\label{eq:x1} 
0&\geq \left\langle \bx^{k+1}-\bx, \bx^{k+1}-\Big(\bx^k - \alpha_k \bm^k/(\widehat{\bv}^k)^{1/2}\Big)\right\rangle_{(\widehat{\bv}^k)^{1/2}}\nonumber\\
&= \left\langle \bx^{k+1}-\bx, (\widehat{\bv}^k)^{1/2}\big(\bx^{k+1}- \bx^k\big) + \alpha_k \bm^k \right\rangle.
\end{align}
The first term of the right side equals to
\begin{align}
&\left\langle \bx^{k+1}-\bx, (\widehat{\bv}^k)^{1/2}\Big(\bx^{k+1} - \bx^k\Big) \right\rangle \nonumber\\
=& \frac{1}{2} \left( \big\| \bx^{k+1}-\bx \big\|^2_{(\widehat{\bv}^k)^{1/2}} - \big\|\bx^k - \bx\big\|^2_{(\widehat{\bv}^k)^{1/2}} + \big\|\bx^{k+1} - \bx^k \big\|^2_{(\widehat{\bv}^k)^{1/2}} \right).  \label{eq:x_abc}
\end{align}
Recursively rewrite $ \big\langle \bx^{k+1}-\bx,  \bm^k \big\rangle $ with the update  \eqref{eq:m_update} 
\begin{align}
 &\big\langle \bx^{k+1}-\bx,  \bm^k \big\rangle \\
=& \big\langle \bx^{k+1} -\bx^k, \bm^k \big\rangle + (1-\beta_{1})\big\langle\bx^k - \bx, \bu^k \big\rangle + \beta_{1}\big\langle\bx^k - \bx,\bm^{k-1} \big\rangle \nonumber \\
=&  \big\langle \bx^{k+1} -\bx^k, \bm^k \big\rangle + (1-\beta_{1})\big\langle\bx^k - \bx, \bu^k \big\rangle  \nonumber\\
&  +  \beta_{1}\left( \big\langle \bx^{k} -\bx^{k-1}, \bm^{k-1} \big\rangle + (1-\beta_{1})\big\langle\bx^{k-1} - \bx, \bu^{k-1} \big\rangle\right) \nonumber\\
& ....\nonumber\\
& + \beta_{1}^{k-1}\left(\big\langle \bx^{2} -\bx^{1}, \bm^{1} \big\rangle + (1-\beta_{1})\big\langle\bx^{1} - \bx, \bu^{1} \big\rangle\right) + \beta_{1}^k\big\langle\bx^{1} - \bx,\bm^{0}\big\rangle \nonumber\\ 
=& \sum_{j=1}^k \beta_1^{k-j} \Big(\big\langle \bx^{j+1} -\bx^{j}, \bm^{j} \big\rangle + (1-\beta_{1})\big\langle\bx^{j} - \bx, \bu^{j}\big\rangle \Big), \label{eq:m_ite}
\end{align}
where the  second equation recursively applied the first equation and 
 the last term $\beta_{1}^k\big\langle\bx^{1} - \bx,\bm^{0}\big\rangle$ vanishes because $\bm^0 = \0$. Plugging equations \eqref{eq:x_abc} and \eqref{eq:m_ite} into the inequality \eqref{eq:x1} gives 
\begin{align}
&\alpha_k \sum_{j=1}^k \beta_1^{k-j}  \Big(\big\langle \bx^{j+1} -\bx^{j}, \bm^{j} \big\rangle + (1-\beta_{1})\big\langle\bx^{j} - \bx, \bu^{j} \big\rangle \Big) \nonumber\\
\leq & \frac{1}{2} \left( -\big\| \bx^{k+1}-\bx\big\|^2_{(\widehat{\bv}^k)^{1/2}} +\big\|\bx^k - \bx\big\|^2_{(\widehat{\bv}^k)^{1/2}} - \big\|\bx^{k+1} - \bx^k \big\|^2_{(\widehat{\bv}^k)^{1/2}}\right).\label{eq:primal_x_before_sum}
\end{align}
Sum the above inequality \eqref{eq:primal_x_before_sum} for $k=1$ to $t$. About the left side, we have
\begin{align}
&\sum_{k=1}^t\alpha_k \sum_{j=1}^k \beta_1^{k-j}  \Big(\big\langle \bx^{j+1} -\bx^{j}, \bm^{j} \big\rangle\Big) = \sum_{j=1}^t  \Big(\big\langle \bx^{j+1} -\bx^{j}, \bm^{j} \big\rangle\Big)\sum_{k=j}^t \alpha_k  \beta_1^{k-j}\nonumber\\
\geq&\sum_{j=1}^t\Big(
  -\frac{\big\|\bx^{j+1} -\bx^{j}\big\|^2_{(\widehat{\bv}^j)^{1/2}}}{2\sum_{k=j}^t \alpha_k  \beta_1^{k-j}}-\frac{\sum_{k=j}^t \alpha_k  \beta_1^{k-j}}{2}\big\|\bm^{j}\big\|^2_{(\widehat{\bv}^j)^{-{1/2}}}  \Big) \sum_{k=j}^t \alpha_k  \beta_1^{k-j}\nonumber\\
\overset{\eqref{eq:beta_ineq}}\geq&\sum_{j=1}^t\Big(
  -\frac{\big\|\bx^{j+1} -\bx^{j}\big\|^2_{(\widehat{\bv}^j)^{1/2}}}{2}-\frac{\alpha_j^2\big\|\bm^{j}\big\|^2_{(\widehat{\bv}^j)^{-{1/2}}}}{2(1\!-\!\beta_1)^2} \Big).\label{eq:primal_sum_left}
\end{align} 
About the right side  of the sum of  the inequality \eqref{eq:primal_x_before_sum}, by $(\widehat{\bv}^{k})^{1/2} \geq (\widehat{\bv}^{k-1})^{1/2}\geq \0$, $k \in [t]$ since the iteration \eqref{eq:v_update2}, and Assumption~\ref{assu:compact}, we have  
\begin{align}
& \sum_{k=1}^{t} \frac{1}{2} \left( -\big\| \bx^{k+1}-\bx \big\|^2_{(\widehat{\bv}^k)^{1/2}} + \big\|\bx^k - \bx\big\|^2_{(\widehat{\bv}^k)^{1/2}}\right)\nonumber\\
=&  \frac{1}{2} \left(-\big\| \bx^{t+1}-\bx \big\|^2_{(\widehat{\bv}^t)^{1/2}} + \sum_{k=2}^{t} \big\| \bx^{k}-\bx\big\|^2_{(\widehat{\bv}^k)^{1/2}-(\widehat{\bv}^{k-1})^{1/2}} + \big\|\bx^1 - \bx\big\|^2_{(\widehat{\bv}^1)^{1/2}}\right)\nonumber\\
\leq &  \frac{B^2}{2} \left( \sum_{k=2}^{t} \big\|(\widehat{\bv}^k)^{1/2}-(\widehat{\bv}^{k-1})^{1/2}\big\|_1 +\big\|(\widehat{\bv}^1)^{1/2}\big\|_1\right) = \frac{B^2}{2} \big\|(\widehat{\bv}^t)^{1/2}\big\|_1.\label{eq:primal_sum_right}
\end{align}
Thus with inequalities \eqref{eq:primal_sum_left} and \eqref{eq:primal_sum_right}, the sum of the inequality \eqref{eq:primal_x_before_sum} for $k=1$ to $t$ becomes
\begin{align*}
&\sum_{k=1}^t \! \alpha_k \! \sum_{j=1}^k \! \beta_1^{k-j} (1\!-\!\beta_{1})\big\langle\bx^{j} \!-\! \bx, \bu^{j} \big\rangle  \!+\! 
\sum_{j=1}^t\!\Big(\!
 -\!\frac{\big\|\bx^{j+1} \!-\!\bx^{j}\big\|^2_{(\widehat{\bv}^j)^{1/2}}}{2}\!-\!\frac{\alpha_j^2\big\|\bm^{j}\big\|^2_{(\widehat{\bv}^j)^{\!-\!{1/2}}}}{2(1\!-\!\beta_1)^2} \Big)\\
\leq& \frac{B^2}{2} \big\|(\widehat{\bv}^t)^{1/2}\big\|_1 -   \frac{1}{2}\sum_{k=1}^t 
 \big\|\bx^{k+1} \!-\! \bx^k \big\|^2_{(\widehat{\bv}^k)^{1/2}}.
\end{align*}
Eliminating the term $\big\|\bx^{j+1} -\bx^{j}\big\|_{(\widehat{\bv}^j)^{1/2}}$ on  both sides and exchanging the order of sums in the first term give 
\begin{align*}
  (1-\beta_{1}) \sum_{j=1}^t \big\langle\bx^{j} - \bx, \bu^{j} \big\rangle \sum_{k=j}^t \alpha_k  \beta_1^{k-j}
\leq \frac{B^2}{2} \big\|(\widehat{\bv}^t)^{1/2}\big\|_1 +  \frac{\sum_{j=1}^t \alpha_j^2\big\|\bm^{j}\big\|^2_{(\widehat{\bv}^j)^{-1/2}}}{2(1\!-\!\beta_1)^2} . 
\end{align*}
Then  take the expectation on  the above inequality.  With the  bounds given in Lemma~\ref{lem:bound_v}, we have
\begin{align*}
 & (1-\beta_{1}) \sum_{j=1}^t \bbE\left[\big\langle\bx^{j} - \bx, \bu^{j} \big\rangle\right] \sum_{k=j}^t \alpha_k  \beta_1^{k-j}
\leq \frac{n\theta B^2}{2}   +  \frac{\sum_{j=1}^t \alpha_j^2  \frac{\sqrt{n}(\theta + \frac{\widehat{P}_\bz^j}{\theta})}{(1-\beta_2)^{1/2}} }{2(1\!-\!\beta_1)^2}\\
\leq&  \frac{n\theta B^2}{2}   +  \frac{ \sqrt{n}(\theta + \frac{\widehat{P}_\bz^t}{\theta}) \sum_{j=1}^t \alpha_j^2 }{2(1\!-\!\beta_1)^2(1\!-\!\beta_2)^{1/2}},
\end{align*}
where  the last inequality holds because we notice $\widehat{P}_\bz^k$ defined in  \eqref{eq:P_hat} is nondecreasing with respect to $k$. 
\end{proof} 

\section{Proof of Lemma~\ref{lem:dual_z}}\label{app:dual_z}
\begin{proof}
For the  dual variable is projected to the  positive region in the update \eqref{eq:z_update}, it follows that for any $\bz\geq 0$, $j\in  [K]$,
\begin{align*}
&\Big\langle\bz^{j+1}-\bz, \bz^{j+1}-\big(\bz^j + {\rho_j}\bw^j\big) \Big\rangle  \leq 0. 
\end{align*}
It could be rewritten as 
\begin{align}\label{eq:z1}
 &\big\langle\bz^{j+1}-\bz, \bz^{j+1}-\bz^j\big \rangle  \leq  \big\langle\bz^{j+1}-\bz, {\rho_j}\bw^j\big\rangle =  \big\langle\bz^{j+1}-\bz^j, {\rho_j}\bw^j\big\rangle + \big\langle\bz^{j}-\bz, {\rho_j}\bw^j\big\rangle. 
\end{align}
For each term of the above inequality \eqref{eq:z1}, we have 
\begin{align*}
\big\langle\bz^{j+1}-\bz, \bz^{j+1}-\bz^j \big\rangle &= \frac{1}{2}\Big( \big\|\bz^{j+1}-\bz^j\big\|^2+\big\|\bz^{j+1} - \bz\big\|^2 - \big\|\bz^j-\bz\big\|^2\Big),\\
\big\langle\bz^{j+1}-\bz^j, \bw^j\big\rangle &\leq \frac{1}{2{\rho_j}}\big\|\bz^{j+1}-\bz^j\big\|^2 + \frac{{\rho_j}}{2} \big\|\bw^j\big\|^2, \\
\big\langle\bz^{j}-\bz,  \bw^j\big\rangle & = \big\langle\bz^{j}-\bz,  \bw^j - \bff(\bx^j)\big\rangle + \big\langle\bz^{j}-\bz,   \bff(\bx^j)\big\rangle. 
\end{align*}
Plugging the above three terms into the inequality \eqref{eq:z1} and eliminating $\big\|\bz^{j+1}-\bz^j\big\|^2$ give
\[\frac{1}{2\rho_j}\Big(\big\|\bz^{j+1} - \bz\big\|^2 \!-\! \big\|\bz^j-\bz\big\|^2\Big) \leq  \frac{{\rho_j}}{2} \big\|\bw^j\big\|^2 \!+\! \big\langle\bz^{j}-\bz,  \bw^j - \bff(\bx^j)\big\rangle \!+\! \big\langle\bz^{j}-\bz,   \bff(\bx^j)\big\rangle.\]
Rearranging the above inequality gives the inequality \eqref{eq:dual_z}.
\end{proof} 

\section{Proof of Lemma~\ref{lem:sum_u}}\label{app:sum_u}
\begin{proof}
For any $j\in  [K]$, we have 
\begin{equation}\label{eq:xx}
\big\langle\bx^{j} - \bx, \bu^{j} \big\rangle =  \big\langle\bx^{j} - \bx, \tilde{\nabla}_\bx \hL (\bx^j,\bz^j) \big\rangle+\big\langle\bx^{j} - \bx, \bu^{j}-\tilde{\nabla}_\bx \hL (\bx^j,\bz^j) \big\rangle.
\end{equation}
Here $ \tilde{\nabla}_\bx \hL (\bx^j,\bz^j)  = \bbE\big[\bu^j\mid\hH^j\big]  \in \partial_\bx \hL(\bx^j,\bz^j)$ according  to Assumption~\ref{assu:bound}.
By the convexity of $f_i(\bx), i=0,1,..., M$, we know $\hL (\bx,\bz)$ is convex with respect to $\bx$ and 
\begin{align*}
& \big\langle \bx^j \!-\!\bx, \tilde{\nabla}_\bx \hL (\bx^j,\bz^j) \big\rangle \!\geq\! \hL (\bx^j,\bz^j) \!-\! \hL (\bx,\bz^j) 
\!=\! f_0(\bx^j) \!- \! f_0(\bx) \!+\! \big\langle \bz^j, \bff(\bx^j)\big\rangle\!-\!\big\langle \bz^j,\bff(\bx)\big\rangle.
\end{align*}
Plug the  lower  bound of  $\langle \bz^j, \bff(\bx^j)\rangle$ given in Lemma~\ref{lem:dual_z}  to the above inequality.
\begin{align*}
&\big\langle \bx^j-\bx, \tilde{\nabla}_\bx \hL (\bx^j,\bz^j) \big\rangle \\
\geq & f_0(\bx^j)-f_0(\bx) -  \big\langle\bz^j, \bff(\bx)\big\rangle +  \big\langle\bz, \bff(\bx^j)\big\rangle  + \frac{1}{2\rho_j}\Big(\big\|\bz^{j+1} - \bz\big\|^2 - \big\|\bz^j-\bz\big\|^2\Big) \\
&- \frac{{\rho_j}}{2} \big\|\bw^j\big\|^2 -\big\langle\bz^{j}-\bz,  \bw^j - \bff(\bx^j)\big\rangle.
\end{align*}
Summarizing \eqref{eq:xx} with weights $ \sum_{k=j}^t \alpha_k\beta_1^{k-j}$ for $j\in[t]$, and plugging the above inequality  give
\begin{align}
&\sum_{j=1}^t \big\langle\bx^{j} - \bx, \bu^{j} \big\rangle \sum_{k=j}^t \alpha_k\beta_1^{k-j}\nonumber\\
\geq& \sum_{j=1}^t \bigg(f_0(\bx^j)-f_0(\bx) - \big\langle\bz^j, \bff(\bx)\big\rangle + \big\langle\bz, \bff(\bx^j)\big\rangle +\frac{1}{2\rho_j}\Big(\big\|\bz^{j+1} - \bz\big\|^2 -\big\|\bz^j-\bz\big\|^2\Big) \nonumber\\
&  - \frac{\rho_j}{2} \big\|\bw^j\big\|^2-\big\langle\bz^{j}-\bz,  \bw^j - \bff(\bx^j)\big\rangle   +\big\langle\bx^{j} - \bx, \bu^{j}-\tilde{\nabla}_\bx \hL (\bx^j,\bz^j) \big\rangle \bigg) \sum_{k=j}^t \alpha_k\beta_1^{k-j}.\label{eq:sum_u0}
\end{align}
Summation of the term about $\big\|\bz^{j+1} - \bz\big\|^2 - \big\|\bz^j-\bz\big\|^2$ can be lower bounded:
\begin{align}
&\sum_{j=1}^t  \frac{1}{2\rho_j}\Big(\big\|\bz^{j+1} - \bz\big\|^2 - \big\|\bz^j-\bz\big\|^2\Big)  \sum_{k=j}^t \alpha_k\beta_1^{k-j}\nonumber\\
  = &  -\frac{ \sum_{k=1}^t \alpha_k  \beta_1^{k-1} }{2\rho_1}\big\|\bz^1-\bz\big\|^2 + \frac{\alpha_t }{2\rho_t}\big\|\bz^{t+1}-\bz\big\|^2 \nonumber\\
  & + \sum_{j=2}^t\left( \frac{\sum_{k=j-1}^t \alpha_k  \beta_1^{k-(j-1)}}{2\rho_{j-1}} - \frac{\sum_{k=j}^t \alpha_k  \beta_1^{k-j}}{2\rho_j}\right) \big\|\bz^j-\bz\big\|^2 \nonumber \\
\overset{\eqref{eq:rho_1} }\geq &  -\frac{ \sum_{k=1}^t \alpha_k  \beta_1^{k-1} }{2\rho_1}\big\|\bz^1- \bz\big\|^2 + \frac{\alpha_t }{2\rho_t}\big\|\bz^{t+1}-\bz\big\|^2 \nonumber\\ \overset{\eqref{eq:beta_ineq}}\geq&  -\frac{ \alpha_1\big\| \bz^1-\bz\big\|^2}{2\rho_1(1-\beta_{1})}  + \frac{\alpha_t }{2\rho_t}\big\|\bz^{t+1}-\bz\big\|^2.  \label{eq:sum_z1}
\end{align} 
Summation of $\big\|\bw^j\big\|^2$ can also be lower bounded
\begin{align}
\!-\! \sum_{j=1}^t   \frac{{\rho_j}}{2} \big\|\bw^j\big\|^2  \sum_{k=j}^t \alpha_k\beta_1^{k-j} 
\!\overset{\eqref{eq:beta_ineq}}\geq \!  -\! \sum_{j=1}^t   \frac{{\rho_j}\alpha_j\big\|\bw^j\big\|^2}{2(1\!-\!\beta_1)}
\!\overset{ \eqref{eq:rho_2}}\geq\! - \! \frac{ \rho_1}{2\alpha_1(1\!-\!\beta_1)^2} \sum_{j=1}^t \alpha_j^2 \|\bw^j\|^2.
 \label{eq:bound_w}
\end{align}  
Plugging the above two inequalities \eqref{eq:sum_z1} and \eqref{eq:bound_w} into the inequality \eqref{eq:sum_u0} gives
\begin{align*} 
& \sum_{j=1}^t \big\langle\bx^{j} - \bx, \bu^{j}\big\rangle \sum_{k=j}^t \alpha_k\beta_1^{k-j}\nonumber\\ 
\geq&  \sum_{j=1}^t \Big(f_0(\bx^j)-f_0(\bx) - \big\langle\bz^j, \bff(\bx)\big\rangle  +  \big\langle\bz, \bff(\bx^j)\big\rangle\Big) \sum_{k=j}^t \alpha_k\beta_1^{k-j} \\
& -\frac{ \alpha_1\big\|\bz^1- \bz\big\|^2 }{2\rho_1 (1-\beta_{1})} 
+ \frac{\alpha_t }{2\rho_t}\big\|\bz^{t+1}-\bz\big\|^2  -  \frac{ \rho_1 \sum_{j=1}^t \alpha_j^2 \|\bw^j\|^2}{2\alpha_1(1\!-\!\beta_1)^2} 
\\
& +\sum_{j=1}^t \Big(\!-\!\big\langle\bz^{j}\!-\!\bz,  \bw^j \!-\! \bff(\bx^j)\big\rangle +\big\langle\bx^{j} \!-\! \bx, \bu^{j}\!-\!\tilde{\nabla}_\bx \hL (\bx^j,\bz^j) \big\rangle  \Big) \sum_{k=j}^t \alpha_k\beta_1^{k-j}.
\end{align*}   
Then taking the  expectation on  the above  inequality  and  using  Assumption~\ref{assu:bound}  to bound $\bbE\big[\big\|\bw^j\big\|^2\big]$ give the result \eqref{eq:sum_u}. 
\end{proof} 

\section{Proof of Lemma~\ref{lem:cov}}\label{app:cov}
\begin{proof}
If $(\bx,\bz)$ are deterministic, we can prove  \eqref{eq:stochastic_x_z2} by the conditional expectation and Assumption~\ref{assu:bound}, i.e.
\begin{align*}
 & \bbE \left[\big\langle\bz^{j}-\bz,  \bw^j - \bff(\bx^j)\big\rangle\right] = \bbE\left[\bbE\big[\big\langle\bz^{j}-\bz,  \bw^j - \bff(\bx^j)\big\rangle \mid \hH^k\big]\right]\\
&\hspace{0.2cm} = \bbE\left[\Big\langle\bz^{j}-\bz,  \bbE[\bw^j - \bff(\bx^j) \mid \hH^k]\Big\rangle\right]= \bbE\left[\big\langle\bz^{j}-\bz, \0\big\rangle\right] =0,\\
&\bbE \left[\big\langle  \bx^{j} - \bx, \bu^{j}-\tilde{\nabla}_\bx \hL (\bx^j,\bz^j) \big\rangle\right] = \bbE\left[\bbE\Big[ \langle\bx^{j} - \bx, \bu^{j}-\tilde{\nabla}_\bx \hL (\bx^j,\bz^j) \rangle \mid \hH^k\Big] \right] \\
&\hspace{0.2cm} = \bbE\left[\Big\langle\bx^{j} - \bx, \bbE\big[ \bu^{j}-\tilde{\nabla}_\bx \hL (\bx^j,\bz^j) \mid \hH^k\big] \Big\rangle \right] = \bbE\left[ \langle\bx^{j} - \bx,\0\rangle  \right] =0.
\end{align*}

Then we prove the stochastic case through considering the left two terms of \eqref{eq:stochastic_x_z}, separately, in a similar way.
Let $\tilde{\bz}^1 = \bz^1, \tilde{\bz}^{j+1} = \tilde{\bz}^j - \gamma_j( \bw^j - \bff(\bx^j))$, then $ \bz^j-\tilde{\bz}^j$ is known given $\hH^k$ and  we have $\bbE \big[ \big\langle \bz^j-\tilde{\bz}^j, \bw^j - \bff(\bx^j) \big\rangle\big] = 0$  like the above deterministic case.  Thus  we have 
\begin{align}
  &\sum_{j=1}^t \gamma_j \bbE \big[ \big\langle \bz^{j}-\bz,  \bw^j - \bff(\bx^j)\big\rangle\big] \nonumber\\
  =& \sum_{j=1}^t \gamma_j\bbE \big[ \big\langle \tilde{\bz}^{j}-\bz,  \bw^j - \bff(\bx^j)\big\rangle\big]  =\sum_{j=1}^t \bbE \big[ \big\langle \tilde{\bz}^{j}-\bz,   \tilde{\bz}^j-\tilde{\bz}^{j+1}  \big\rangle\big]  \nonumber\\
  =& \sum_{j=1}^t\frac{1}{2}\bbE\left[ \big\|\tilde{\bz}^{j}-\bz\big\|^2 + \big\|\tilde{\bz}^j-\tilde{\bz}^{j+1}\big\|^2 - \big\|\bz- \tilde{\bz}^{j+1}  \big\|^2\right] \nonumber \\
  =& \frac{1}{2}\bigg(\bbE\big[ \big\|\tilde{\bz}^{1}-\bz\big\|^2\big] + \sum_{j=1}^t\bbE\big[ \big\|\tilde{\bz}^j-\tilde{\bz}^{j+1}\big\|^2\big]  - \bbE\big[ \big\|\bz- \tilde{\bz}^{t+1} \big\|^2\big] \bigg) \nonumber \\
 \leq  & \frac{1}{2}\bigg(\bbE\big[ \big\| \bz^1-\bz\big\|^2\big]  + \sum_{j=1}^t \gamma_j^2\bbE\big[ \big\| \bw^j - \bff(\bx^j)\big\|^2\big] \bigg) \nonumber\\
 \leq &  \frac{1}{2}\bigg(\bbE\big[ \big\|  \bz^1-\bz\big\|^2\big]  + \sum_{j=1}^t \gamma_j^2 \bbE\big[ \big\|\bw^j\big\|^2\big] \bigg) 
 \leq   \frac{1}{2}\bigg(\bbE\big[ \big\|  \bz^1-\bz\big\|^2\big]  + F^2 \sum_{j=1}^t \gamma_j^2 \bigg),\label{eq:sum_z}
\end{align} 
where the first inequality holds because we drop the nonpositive term and $\tilde{\bz}^1 = \bz^1$; the  second inequality holds  because for any random  vector $\bw$,  $\bbE\big[\big\|\bw-\bbE[\bw]\big\|^2\big]\leq\bbE\big[\big\|\bw\big\|^2\big]$, and  here $\bbE\big[\bw^j\mid\hH^j\big] = \bff(\bx^j)$ for $j\in[t]$;  and the  last  inequality holds by  Assumption~\ref{assu:bound}.  

For the summation  of  $\gamma_j\bbE\big[ \big\langle \bx^j-\bx, \bu^j -\tilde{\nabla}_\bx \hL (\bx^j,\bz^j)\big\rangle\big]$,   
let $\tilde{\bx}^1 = \bx^1, \tilde{\bx}^{j+1} = \tilde{\bx}^j + \gamma_j\big(\bu^j -\tilde{\nabla}_\bx \hL (\bx^j,\bz^j)\big)$, then $\bbE\big[\big\langle \bx^j-\tilde{\bx}^j, \bu^j -\tilde{\nabla}_\bx \hL (\bx^j,\bz^j) \big]\big\rangle = 0$ and
\begin{align}
  &- \sum_{j=1}^t \gamma_j\bbE\big[\big\langle \bx^j-\bx, \bu^j -\tilde{\nabla}_\bx \hL (\bx^j,\bz^j)\big\rangle\big] \nonumber \\
 =&  - \sum_{j=1}^t \gamma_j\bbE\big[\big\langle\tilde{\bx}^j-\bx, \bu^j -\tilde{\nabla}_\bx \hL (\bx^j,\bz^j)\big\rangle\big] 
 = \sum_{j=1}^t \bbE\big[\big\langle \tilde{\bx}^{j}-\bx,   \tilde{\bx}^j-\tilde{\bx}^{j+1}  \big\rangle\big] \nonumber\\
 =&\sum_{j=1}^t \frac{1}{2}  \bbE \left[ \big\|\tilde{\bx}^{j}-\bx\big\|^2 + \big\|\tilde{\bx}^j-\tilde{\bx}^{j+1}\big\|^2 - \big\|\bx- \tilde{\bx}^{j+1}  \big\|^2\right] \nonumber\\
 =& \frac{1}{2}\bigg(\bbE \big[\big\|\tilde{\bx}^{1}-\bx\big\|^2\big] + \sum_{j=1}^t\bbE\big[\big\|\tilde{\bx}^j-\tilde{\bx}^{j+1}\big\|^2\big] - \bbE\big[\big\|\bx- \tilde{\bx}^{t+1} \big\|^2\big]\bigg)\nonumber \\
\leq&  \frac{1}{2}\bigg(n B^2 + \sum_{j=1}^t \gamma_j^2 \bbE\big[\big\|\bu^j -\tilde{\nabla}_\bx \hL (\bx^j,\bz^j)\big\|^2\big]\bigg)
   \leq  \frac{1}{2}\bigg(n B^2 + \sum_{j=1}^t \gamma_j^2 \bbE\big[\big\|\bu^j\big\|^2\big]\bigg)\nonumber\\
 \leq  & \frac{1}{2}\bigg(n B^2 +( \max_{j\in [t]} \bbE\big[\big\|\bu^j\big\|^2\big]) \sum_{j=1}^t \gamma_j^2 \bigg)  \leq  \frac{1}{2}\bigg(n B^2 +  \widehat{P}_\bz^t \sum_{j=1}^t \gamma_j^2\bigg),\label{eq:sum_x}
\end{align} 
where the first inequality holds because  we drop the nonpositive term and \eqref{eq:nB2}; the  second inequality holds  because  $\bbE\big[\bu^j\mid\hH^j\big]= \tilde{\nabla}_\bx \hL (\bx^j,\bz^j)$ for $j\in[t]$;  and the  last two inequalities holds by  Assumption~\ref{assu:bound}  and  \eqref{eq:P_hat}.  
  
Adding \eqref{eq:sum_z} and \eqref{eq:sum_x} gives the result \eqref{eq:stochastic_x_z}.
\end{proof}

\section{Proof of Lemma~\ref{lem:3.2}}\label{app:3.2}
\begin{proof} 
Let  $\bx = \bx^*$ in \eqref{eq:lem3.2}, we have  $\bff(\bx^*)\leq \0$ and thus
\begin{equation}\label{eq:3.5}
\bbE \big[f_0(\bar{\bx}) -   f_0(\bx^*)   +  \big\langle\bz, \bff(\bar{\bx})\big\rangle\big] \leq \epsilon_1+\epsilon_0\bbE\big[\big\|\bz\big\|^2\big].
\end{equation}
Since $f_j(\bar\bx)\leq [f_j(\bar\bx)]_+$ and $\bz^*\geq 0$,  we have from \eqref{eq:2.41} that 
\begin{equation}\label{eq:3.6}
-\sum_{j=1}^M z_j^* [f_j(\bar{\bx})]_+ \leq f_0(\bar\bx)-f_0(\bx^*).
\end{equation}
Substituting \eqref{eq:3.6} into \eqref{eq:3.5} with $\bz$ given by  $z_j = 1 + z_j^*$ if $f_j(\bar\bx)>0$ and $z_j  = 0$ otherwise for any $j\in [M]$ gives
\begin{equation*} 
-\bbE\Big[\sum_{j=1}^M z_j^* [f_j(\bar{\bx})]_+\Big] +\bbE\Big[\sum_{j=1}^M (1+z_j^*) [f_j(\bar{\bx})]_+\Big] \leq  \epsilon_1+\epsilon_0  \big\|1+\bz^*\big\|^2.
\end{equation*}
Simplifying the above inequality gives \eqref{eq:3.2.2}.

Letting  $z_j = 3 z_j^*$ if $f_j(\bar\bx)>0$ and $z_j  = 0$ otherwise for any $j\in [M]$ in \eqref{eq:3.5} and adding \eqref{eq:3.6} together gives 
\begin{equation}\label{eq:3.7}
\bbE\Big[\sum_{j=1}^M z_j^* [f_j(\bar{\bx})]_+\Big]\leq \frac{ \epsilon_1}{2}+\frac{9\epsilon_0}{2}\big\|\bz^*\big\|^2.
\end{equation}
Hence,  by  the above inequality and  \eqref{eq:3.6}, we obtain 
\begin{equation}
 -\bbE\big[f_0(\bar\bx)-f_0(\bx^*)\big]\leq  \frac{ \epsilon_1}{2}+\frac{9\epsilon_0}{2} \big\|\bz^*\big\|^2. \label{eq:3.2.7}
\end{equation}
Thus $\bbE\big[[f_0(\bar\bx)-f_0(\bx^*)]_-\big]\leq \frac{\epsilon_1}{2}+\frac{9\epsilon_0}{2}\big\|\bz^*\big\|^2.$
In addition,  from \eqref{eq:3.5} with  $\bz = 0$, it  follows $\bbE\big[f_0(\bar\bx)-f_0(\bx^*)\big]\leq  \epsilon_1$. Since  $\mid a\mid = a + 2[a]_-$  for any real number $a$, we have 
\[
\bbE \big[\mid f_0(\bar\bx)-f_0(\bx^*)\mid\big] \!=\! \bbE\big[f_0(\bar\bx)-f_0(\bx^*)\big]+2\bbE\big[[f_0(\bar\bx)-f_0(\bx^*)]_-\big]\leq 2 \epsilon_1+9\epsilon_0\big\|\bz^*\big\|^2,
\]
which gives  \eqref{eq:3.2.1}.

Furthermore, let  $\bz  = 0$ in \eqref{eq:lem3.2} and take $\widehat\bx\in \mbox{argmin}_{\bx\in X}  f_0(\bx) + \big\langle \bar{\bz},\bff(\bx)\big\rangle$. By equation \eqref{eq:dual_function}, we have $\bbE f_0(\bar\bx)\leq  \bbE d(\bar{\bz}) + \epsilon_1$, which together with  \eqref{eq:strong_dual}  gives 
\begin{equation}
 \bbE\big[d(\bz^*)-d(\bar{\bz})\big] \leq \bbE\big[f_0(\bx^*)-f_0(\bar{\bx})\big]+  \epsilon_1.
\end{equation}
Combining the  above inequality with  \eqref{eq:3.2.7} gives 
\eqref{eq:3.2.3}.
\end{proof} 

\section{Proof of Corollary~\ref{cor:vary_stepL}}\label{app:vary_stepL}
\begin{proof} 
For  the step sizes $\{\alpha_j\}_{j=1}^K$, we have 
\begin{align*}
&\sum_{j=1}^{K} \alpha_j =\alpha \sum_{j=1}^{K} \frac{1}{\sqrt{j+1}}\geq \alpha \int_{s = 2}^{K+2} \frac{1}{\sqrt{s}} \mbox{d}s = 2\alpha(\sqrt{K+2}-\sqrt{2}),\\
&\sum_{j=1}^{K} \alpha_j^2 =\alpha^2 \sum_{j=1}^{K} \frac{1}{j+1}\leq \alpha^2 \int_{s = 1}^{K+1} \frac{1}{s} \mbox{d}s =  \alpha^2\log{(K+1)}.
\end{align*}
Similarly, for $\{\rho_j\}_{j=1}^K$, it  holds 
$ \sum_{j=1}^{K} \rho_j^2\leq  \rho^2\log{(K+1)}$.  
Plug these bounds to the result of Theorem~\ref{thm:convergeL} and note $\log(K+1)\geq 1$ for $K\geq 2$.
We finish the proof.
\end{proof}

\bibliography{adaptive_primal-dual_final}{}
\bibliographystyle{plain} 
\end{document}